\newcommand{\mayorchico}{\genfrac\lceil\rceil{0.5pt}1}
\newcommand{\lf}{\left\lfloor}
\newcommand{\rf}{\right\rfloor}
\newcommand\reallywidehat[1]{%
\savestack{\tmpbox}{\stretchto{%
  \scaleto{%
    \scalerel*[\widthof{\ensuremath{#1}}]{\kern-.6pt\bigwedge\kern-.6pt}%
    {\rule[-\textheight/2]{1ex}{\textheight}}
  }{\textheight}%
}{0.5ex}}%
\stackon[1pt]{#1}{\tmpbox}%
}
\newtheorem{theorem}[subsection]{Theorem}
\newtheorem*{theorem*}{Theorem}
\newtheorem{proposition}[subsection]{Proposition}
\newtheorem*{proposition*}{Proposition}
\newtheorem{lemma}[subsection]{Lemma}
\newtheorem*{lemma*}{Lemma}
\newtheorem{corollary}[subsection]{Corollary}
\theoremstyle{definition}
\newtheorem*{definition}{Definition}
\newtheorem{remark}[subsection]{Remark}
\newcommand*{\bfrac}[2]{\genfrac{[}{]}{0pt}{}{#1}{#2}}
\title{Arithmetic actions on cyclotomic function fields}
\author{Aristides Kontogeorgis}
\author{ Jacob Kenneth  Ward{$\dagger$}}
\thanks{$\dagger$ 19 September 1982 -  23 June 2019}
\date{\today}
\begin{document}
\maketitle
\begin{abstract} We derive the group structure for cyclotomic function fields obtained by applying the Carlitz action for extensions of an initial constant field. The tame and wild structures are isolated to describe the Galois action on differentials. We show that the associated invariant rings are not polynomial. 
\end{abstract}

\section{Introduction}

Let $\mathbb{F}_q$ denote the finite field of $q$ elements, where $q = p^r$ is a power of the prime integer $p$. Let $d$ be a nonnegative integer. Consider the rational function field $\mathbb{F}_{q^d}(T)$. The Carlitz action for $\mathbb{F}_{q^d}$ is defined as
$$
C_{q^d}(T)(u) = Tu + u^{q^d},
\quad u\in \overline{\mathbb{F}_q(T)}.
$$ 
This defines an action by $T$, which may be extended $\mathbb{F}_{q^d}$-linearly to an action by elements of $\mathbb{F}_{q^d}[T]$, according to 
\begin{equation}\label{Carlitzaction} 
M = \sum a_i T^i\in \mathbb{F}_{q^d}[T], \qquad 
C_{q^d}(M)(u) = M {*}_du=\sum a_i C_{q^d}^i(T)(u). 
\end{equation} The Carlitz $M$-torsion points $C_{q^d}[M]$ are then the torsion $\mathbb{F}_{q^d}$-modules within the algebraic closure $\overline{\mathbb{F}_{q^d}(T)}$ via this action. We note that this does depend upon the choice of $d$. Henceforth, for a polynomial $M \in \mathbb{F}_{q^d}[T]$, we let $K_{q^d,M}$ denote the \emph{cyclotomic function field} for $\mathbb{F}_{q^d}$ and $M$, which is obtained by adjoining to $\mathbb{F}_{q^d}(T)$ the Carlitz $M$-torsion points $C_{q^d}[M]$. These function fields are called \emph{cyclotomic} as they are derived from an exponential function in positive characteristic and enjoy many of the same properties of the classical cyclotomic extensions of $\mathbb{Q}$ \cite[Chapter 12]{salvador2007topics}. 

We suppose henceforth that $M \in \mathbb{F}_q[T]$ and that $M$ splits over $\mathbb{F}_{q^d}[T]$. As in \cite{Ward}, one may determine explicitly the holomorphic differentials for $K_{q^d,M}$. Here, we relate this situation to the original cyclotomic function field $K_{q,M}$. We first prove the essential result:

\begin{theorem} \label{inclusion} $K_{q,M} \subset K_{q^d,M}$.  \end{theorem}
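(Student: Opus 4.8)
The plan is to recast the inclusion as a statement about two Galois characters and then compare them on Frobenius elements. Since $\mathbb{F}_q \subset \mathbb{F}_{q^d}$ is immediate, it suffices to prove $C_q[M] \subset K_{q^d,M}$, equivalently $\mathbb{F}_{q^d}(T)(C_q[M]) \subseteq K_{q^d,M}$. I would encode both cyclotomic fields through their Carlitz (cyclotomic) characters: writing $G = \mathrm{Gal}(\overline{\mathbb{F}_{q^d}(T)}/\mathbb{F}_{q^d}(T))$, the $q$-action gives a continuous character $\chi_{q,M}\colon G \to (\mathbb{F}_q[T]/M)^\times$ determined by $\sigma(\lambda) = C_q(\chi_{q,M}(\sigma)){*}\lambda$ for $\lambda \in C_q[M]$, and likewise $\chi_{q^d,M}\colon G \to (\mathbb{F}_{q^d}[T]/M)^\times$ for the $q^d$-action. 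The fixed field of $\ker\chi_{q^d,M}$ is $K_{q^d,M}$, while the fixed field of $\ker\chi_{q,M}$ is $\mathbb{F}_{q^d}(T)(C_q[M])$. Thus the theorem is equivalent to the inclusion of kernels $\ker\chi_{q^d,M} \subseteq \ker\chi_{q,M}$.

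The engine of the proof is the claim that $\chi_{q,M}$ factors through $\chi_{q^d,M}$ via the ring norm. Let $N\colon (\mathbb{F}_{q^d}[T]/M)^\times \to (\mathbb{F}_q[T]/M)^\times$ be the norm of the free extension $\mathbb{F}_{q^d}[T]/M$ over $\mathbb{F}_q[T]/M$, that is, $N(x) = \prod_{i=0}^{d-1} x^{(q^i)}$, where the $q$-power Frobenius acts on the $\mathbb{F}_{q^d}$-coefficients. I would prove $\chi_{q,M} = N \circ \chi_{q^d,M}$ by evaluating both sides on Frobenius classes and invoking Chebotarev density. For a monic prime $\wp$ of $\mathbb{F}_{q^d}[T]$ with $\wp \nmid M$, lying over the monic prime $P$ of $\mathbb{F}_q[T]$ with residue degree $f = f(\wp/P)$, the reciprocity law for cyclotomic function fields \cite[Chapter 12]{salvador2007topics} gives $\chi_{q^d,M}(\mathrm{Frob}_\wp) = \wp \bmod M$, hence $N(\chi_{q^d,M}(\mathrm{Frob}_\wp)) = N(\wp)\bmod M$. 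On the other side, because $\mathbb{F}_{q^d}(T)/\mathbb{F}_q(T)$ is the constant extension, $\mathrm{Frob}_\wp$ acts on $C_q[M]$ as $\mathrm{Frob}_P^{\,f}$, so $\chi_{q,M}(\mathrm{Frob}_\wp) = (P \bmod M)^{f}$.

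These two values agree thanks to the ideal identity $N(\wp) = P^{f}$: the $\mathrm{Gal}(\mathbb{F}_{q^d}/\mathbb{F}_q)$-conjugates of $\wp$ are precisely the $g = d/f$ primes over $P$, each occurring $f$ times in $\prod_{i=0}^{d-1}\wp^{(q^i)}$, and the product of all primes over $P$ equals $P$. Reducing modulo $M$ gives $N(\wp) \equiv P^{f}$, so $\chi_{q,M}$ and $N \circ \chi_{q^d,M}$ coincide on every $\mathrm{Frob}_\wp$ with $\wp \nmid M$. As these classes are dense in $G$ and both maps are continuous, $\chi_{q,M} = N \circ \chi_{q^d,M}$; in particular $\ker\chi_{q^d,M} \subseteq \ker(N \circ \chi_{q^d,M}) = \ker\chi_{q,M}$, which is the desired containment. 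I expect the main obstacle to be exactly this norm-compatibility step — verifying that Frobenius at $\wp$ acts as the $f$-th power of Frobenius at $P$ and matching this to the ring norm $N$ — while the Galois-correspondence bookkeeping is routine. As a sanity check, for $M = T$ one has $C_q[T] = \{u : u^{q-1} = -T\}\cup\{0\}$ and $C_{q^d}[T] = \{u : u^{q^d-1} = -T\}\cup\{0\}$, and since $(q-1)\mid(q^d-1)$ a generator $\lambda$ of the former is the power $(\lambda')^{(q^d-1)/(q-1)}$ of a generator $\lambda'$ of the latter, exhibiting both the inclusion and the norm $N$ explicitly. I note finally that this argument does not invoke the hypothesis that $M$ splits over $\mathbb{F}_{q^d}$, which is presumably needed only for the later structural results.
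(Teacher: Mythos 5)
Your proof is correct, and it takes a genuinely different route from the paper's. The paper invokes Bauer's theorem --- an inclusion of Galois extensions of $\mathbb{F}_q(T)$ is detected by the reverse inclusion of the sets of completely split places --- and then shows that a finite place $P$ split in $K_{q^d,M}$ must split in $K_{q,M}$, using the cyclotomic splitting criterion (inertia degree equals the multiplicative order of the prime modulo $M$) at both levels, together with a Galois-conjugacy argument that all factors $P_i$ of $P$ in $\mathbb{F}_{q^d}[T]$ have the same order modulo $M$, whence $P=\prod_i P_i\equiv 1 \bmod M$. You instead prove the stronger character identity $\chi_{q,M}=N\circ\chi_{q^d,M}$ and pass to kernels. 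Your steps all check out: the fixed-field identifications hold because $C_{q^d}[M]$ and $C_q[M]$ are faithful cyclic modules over $A_d/MA_d$ and $A_1/MA_1$; Carlitz reciprocity gives the two Frobenius values; the restriction formula $\mathrm{Frob}_\wp|_{K_{q,M}}=\mathrm{Frob}_P^{\,f}$ is the standard compatibility; and $N(\wp)=P^{f}$ holds exactly, since the coefficient-Frobenius orbit of $\wp$ consists of the $d/f$ primes above $P$, each with stabilizer of order $f$. The only place needing care is the density step: the compositum $K_{q^d,M}\cdot \mathbb{F}_{q^d}K_{q,M}$ is not known a priori to be geometric over $\mathbb{F}_{q^d}(T)$, so you must quote Chebotarev for function fields in the form that accommodates constant-field extensions; that version still produces, for every element of the finite abelian Galois group, infinitely many finite primes $\wp\nmid M$ realizing it as a Frobenius, which is all you use. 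As for what each route buys: both rest on a density theorem (Bauer's theorem is itself a consequence of Chebotarev), so neither is more elementary, and neither uses the hypothesis that $M$ splits over $\mathbb{F}_{q^d}$, as you observe. But your norm identity yields strictly more than the inclusion: it identifies the surjection $G_{q^d,M}\to G_{q,M}$ of the exact sequence \eqref{sesH} concretely as the ring norm $(A_d/MA_d)^*\to (A_1/MA_1)^*$, so that $H_{q^d,M}=\ker N$ --- a useful complement to Lemma \ref{Kummerfun}, which describes the same group as $(\sigma-1)G_{q^d,M}$. The paper's argument, conversely, is shorter given the stock results it cites and never leaves the language of prime splitting.
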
 

This allows us to understand the differentials of $K_{q,M}$ in terms of those of $K_{q^d,M}$ by examining Galois covers. Henceforth, we denote $$H_{q^d,M}:=\mathrm{Gal}(K_{q^d,M}/\mathbb{F}_{q^d} K_{q,M}).$$ One then obtains naturally the following tower of fields:
\begin{equation}
\label{fig:tower}
\xymatrix{
 & K_{q^d,M} \ar@{-}[dd] \ar@{-}[dl]_{H_{q^d,M}} \\
 \mathbb{F}_{q^d}K_{q,M}  \ar@{-}[d] \ar@{-}[dr] & \\
 K_{q,M} \ar@{-}[dr] & \mathbb{F}_{q^d}(T) \ar@{-}[d] \\
 & \mathbb{F}_q(T)
}
\end{equation}
The group $H_{q^d,M}$ was studied by Chapman \cite{Chapman} when $M  \in \mathbb{F}_q[T]$ is square-free, in order to give a normal integral basis of the ring of integers of $K_{q,M}$ over $\mathbb{F}_q[T]$. This cannot be done in the same fashion if $M$ contains a square $P^2$ of an irreducible polynomial $P \in \mathbb{F}_q[T]$, as under the wild ramification at $P$ in $K_{q,M}$ the integer ring is no longer a free $\mathbb{F}_q[T]$ module. Here, we study $H_{q^d,M}$ more generally, as the genus is invariant under constant extensions, so that identification of the space of differentials of $K_{q,M}$ reduces to identifying those differentials of $K_{q^d,M}$ which are fixed by the action of $H_{q^d,M}$.

The Frobenius map on $\mathbb{F}_{q^d}/\mathbb{F}_q$ permutes the roots of $M$, and the group $H_{q^d,M}$ is nontrivial: Even in the case where $M = P$ is irreducible over $\mathbb{F}_q[T]$ and $\deg(M) = d$, $|H_{q^d,M}|=(q^d-1)^{d-1}$. The group $H_{q^d,M}$ may be described in a simple, explicit way in terms of the Carlitz action. The abelian Galois group $\mathrm{Gal}(K_{q^d,M}/\mathbb{F}_{q^d}(T))$ is naturally a $\mathrm{Gal}(\mathbb{F}_{q^d}/\mathbb{F}_{q})$-module, and denoting $\mathrm{Gal}(\mathbb{F}_{q^d}/\mathbb{F}_{q})=\langle \sigma\rangle$, the group $H_{q^d,M}$ may be described as 
\[
H_{q^d,M}=
(\sigma-1)\mathrm{Gal}(K_{q^d,M}/\mathbb{F}_{q^d}(T)). 
\] 
This is proven in Lemma \ref{Kummerfun}. In general, the absolute Galois group $\hat{\mathbb{Z}}=\mathrm{Gal}(\bar{\mathbb{F}}_q/\mathbb{F}_q)$ acts on the cyclotomic function fields, and this action may be precisely described using the group structure of $H_{q^d,M}$. This point of view, where the {\em arithmetic} part of a Galois group is acting on the {\em geometric} part, is a unifying notion. One may see, for example, the seminal article of Y. Ihara \cite{IharaProfinite}.

We devote Section 2 to the description of $H_{q^d,M}$. In Section 3, we examine the tame structures arising within this group and give an explicit Kummer generator of the tame part of $H_{q^d,M}$. Section 4 describes the wild component of $H_{q^d,M}$, including the higher ramification groups and different. Cyclotomic function fields may also be viewed as towers of Kummer and Artin-Schreier extensions, whose form we give in Section 5. Section 6 describes the Galois module structure of the differentials of $K_{q,M}$. We concern ourselves with the group $H_{q^d,M}$ as its invariants yield differentials on the constant extension $\mathbb{F}_{q^d} K_{q,M}$ of $K_{q,M}$, and constant extensions do not alter the genus, so that it suffices to give a description of the differentials of $\mathbb{F}_{q^d} K_{q,M}$ in order to understand the Galois module structures for $K_{q,M}$. The holomorphic differentials are comprised of products of generators of the Carlitz torsion modules. In order to compute the $H_{q^d,M}$-invariant differentials, we will employ modular invariant theory. For fixed $q^d$ and $M$, our constructions provide an algorithm for the computation of invariants and holomorphic differentials for the function field $K_{q,M}$. As we show in Section 6, the invariant ring for the algebra of Carlitz generators is not polynomial. This shows that a closed formula for a basis for holomorphic differentials will be quite complicated in general, despite that it can be done when $M$ splits over $\mathbb{F}_q$ \cite{Ward}.

\section{Preliminary results}

\subsection{Inclusions of cyclotomic function fields} We now give the proof of Theorem \ref{inclusion}.

\begin{proof}[Proof of Theorem \ref{inclusion}] For a global field $K$, let $S(L/K)$ denote the collection of places of $K$ which split completely in $L$. By Bauer's theorem \cite[Theorem 11.5.1]{salvador2007topics}, we have for a global function field $K$ and two Galois extensions $L_1$ and $L_2$ of $K$ that $$L_2 \subset L_1 \Leftrightarrow S(L_1/K) \subset S(L_2/K).$$ Thus, the extension $K_{q,M}$ is contained in $K_{q^d,M}$ if, and only if, the places of $\mathbb{F}_q(T)$ which split in $K_{q^d,M}$ also split in $K_{q,M}$. For a place $\mathfrak{p}$ of $\mathbb{F}_q(T)$, we let $\mathfrak{P}$ denote a place of $K_{q,M}$ above $\mathfrak{p}$, $\mathfrak{p}_{q^d}$ a place of $\mathbb{F}_{q^d}(T)$ above $\mathfrak{p}$, and $\mathfrak{P}_{q^d}$ a place of $K_{q^d,M}$ above $\mathfrak{p}$. If $\mathfrak{p}$ denotes a place of $K = \mathbb{F}_q(T)$ which splits completely in $K_{q^d,M}$, then we have $$[\mathfrak{O}_{\mathfrak{P}_{q^d}}/\mathfrak{P}_{q^d}:\mathfrak{o}_{\mathfrak{p}}/\mathfrak{p}] = 1,$$ where $\mathfrak{O}_{\mathfrak{P}_{q^d}}$ denotes the valuation ring for $\mathfrak{P}_{q^d}$ in $K_{q^d,M}$ and $\mathfrak{P}_{q^d}$ its maximal ideal, $\mathfrak{o}_{\mathfrak{p}}$ the valuation ring for $\mathfrak{p}$ in $K$ and $\mathfrak{p}$ its maximal ideal. In the analogous notation, we thus have $$[\mathfrak{O}_{\mathfrak{P}_{q^d}}/\mathfrak{P}_{q^d}:\mathfrak{o}_{\mathfrak{p}_{q^d}}/\mathfrak{p}_{q^d}][\mathfrak{o}_{\mathfrak{p}_{q^d}}/\mathfrak{p}_{q^d}: \mathfrak{o}_{\mathfrak{p}}/\mathfrak{p}] = 1.$$
In particular, it follows that $$[\mathfrak{O}_{\mathfrak{P}_{q^d}}/\mathfrak{P}_{q^d}:\mathfrak{o}_{\mathfrak{p}_{q^d}}/\mathfrak{p}_{q^d}] = 1,$$ and that the place $\mathfrak{p}_{q^d}$ of $\mathbb{F}_{q^d}(T)$ is completely split in $K_{q^d,M}$. Notice that $\mathfrak{p}$ cannot be infinity, as infinity is not split in cyclotomic extensions (the ramification index is the size of the constant field minus one \cite[Theorem 12.4.6]{salvador2007topics}). It follows that the place $\mathfrak{p}_{q^d}$ is associated with an irreducible polynomial $P_{q^d} \in \mathbb{F}_{q^d}[T]$. By cyclotomic function field theory, for example, Proposition 12.5.2 of Villa-Salvador 
\cite{salvador2007topics}, the inertia degree of $\mathfrak{p}_{q^d}$ is equal to the order of $P_{q^d}$ modulo $M$. In fact, we may write $$P = \prod_{i=1}^k P_i$$ in $\mathbb{F}_{q^d}[T]$, where $P$ is associated with $\mathfrak{p}$ in $\mathbb{F}_q[T]$ and the polynomials $P_i$ are the (distinct) factors of $P$ in $\mathbb{F}_{q^d}(T)$. 
We also know that the finite places of $\mathbb{F}_{q^d}$ which divide $M$ ramify in $K_{q^d,M}$, and hence are not completely split. Also, we know that $P_{q^d} = P_i$ for some $i$, but also that $P_{q^d}$ was an arbitrary choice of place, which could be done as the extensions we consider are all Galois. Thus $(P_i,M)=1$ for all $i=1,\ldots,k$, from which it follows that $(P,M) = 1$. By cyclotomic function field theory, we also know that the order of $P$ modulo $M$ in $\mathbb{F}_q[T]$ is equal to the inertia degree of $\mathfrak{p}$ in $K_{q,M}$. For each $i = 1,\ldots,k$, let $o_{h,i}$ denote the order of $P_i$ modulo $M$ in $\mathbb{F}_{q^d}[T]$. Thus $$P_i^{o_{h,i}} \equiv 1 \mod M, \quad i=1,\ldots,k.$$ We may then write $P_i^{o_{h,i}} - 1 = FM$ for some $F \in \mathbb{F}_{q^d}[T]$. 
As the Galois action of $\mathbb{F}_{q^d}(T)/\mathbb{F}_q(T)$ is transitive, it follows that for any $j = 1,\ldots,k$, there exists $\sigma \in \text{Gal}(\mathbb{F}_{q^d}(T)/\mathbb{F}_q(T))$ such that $\sigma(P_i) = P_j$. It follows that $$P_j^{o_{h,i}} - 1 =\sigma(P_i)^{o_{h,i}} - 1 = \sigma(P_i^{o_{h,i}}) - 1 = \sigma(P_i^{o_{h,i}} - 1) = \sigma(FM) = \sigma(F)M,$$ where $\sigma(F) \in \mathbb{F}_{q^d}[T]$ by definition of the Galois action of $\mathbb{F}_{q^d}(T)/\mathbb{F}_q(T)$. It follows by symmetry that the order of each $P_i$ modulo $M$ is the same. We therefore set $o_h := o_{h,i}$ for $i=1,\ldots,k$. It follows from this that $$P^{o_h} = \left(\prod_{i=1}^k P_i\right)^{o_h} =\prod_{i=1}^k P_i^{o_h} \equiv \prod_{i=1}^k 1 = 1 \mod M,$$ and hence the order $o_P$ of $P$ modulo $M$ is at most $o_h$. As the polynomials $P_i$, $i=1,\ldots,k$ are completely split in $K_{q^d,M}$, it follows from the cyclotomic theory that $o_h = 1$, and thus $o_P \leq o_h = 1$. Thus, the place $\mathfrak{p}$ associated with $P$ is completely split in $K_{q,M}$. We have thus shown that if a place $\mathfrak{p}$ of $K = \mathbb{F}_q(T)$ splits completely in $K_{q^d,M}$, then it must also split completely in $K_{q,M}$. Hence, by Bauer's theorem, it follows that $K_{q,M} \subset K_{q^d,M}$.
\end{proof}
Henceforth, as we frequently distinguish between $\mathbb{F}_q[T]$ and $\mathbb{F}_{q^d}[T]$, we denote $A_1 := \mathbb{F}_q[T]$ and $A_d:=\mathbb{F}_{q^d}[T]$. The following properties of Galois extensions may be easily deduced from the previous theorem (see also \cite[Chapter 12]{salvador2007topics}). 
\begin{itemize}
\item 
$\mathrm{Gal}(\mathbb{F}_{q^d}(T)/\mathbb{F}_q(T)) \cong\mathrm{Gal}(\mathbb{F}_{q^d}/\mathbb{F}_q)$
\item 
$\mathrm{Gal}(K_{q,M}/\mathbb{F}_q(T))\cong (A_1/MA_1)^*$.
\item 
$\mathrm{Gal}(K_{q^d,M}/\mathbb{F}_{q^d}(T))\cong (A_d/MA_d)^*$.
\item 
$\mathrm{Gal}(K_{q^d,M}/\mathbb{F}_q(T)) \cong (A_d/MA_d)^*\times \mathrm{Gal}(\mathbb{F}_{q^d}/\mathbb{F}_q)$.
\end{itemize}
We now fix a basis $1=v_1,\ldots,v_d$ of $\mathbb{F}_{q^d}$ as a vector space over $\mathbb{F}_q$, and we write
\[
A_d\cong A_1 \oplus v_2A_1 \oplus \cdots \oplus  v_dA_1.
\]
As $M\in A_1$ by definition, we then have
\[
A_d/M A_d  \cong A_1/MA_1 \oplus v_2A_1/MA_1 \oplus \cdots \oplus v_dA_1/MA_1.
\]
Also, the group $\mathrm{Gal}(K_{q^d,M}/K_{q,M})$ fits in  the short exact sequence
\[
1 \rightarrow \mathrm{Gal}(K_{q^d,M}/K_{q,M}) \rightarrow (A_d/MA_d)^*  \times \mathrm{Gal}(\mathbb{F}_{q^d}/\mathbb{F}_q) \rightarrow (A_1/MA_1)^* \rightarrow 1.
\]
For ease of notation, we define $G_{q^d,M}:=\mathrm{Gal}(K_{q^d,M}/\mathbb{F}_{q^d}(T))$. It is well-known that 
\begin{equation}\label{isomorphism}
	G_{q^d,M} \cong
	\left(A_d/M A_d\right)^*,
\end{equation}
where the Galois action is induced by the Carlitz action by elements of $(\mathbb{F}_{q^d}[T]/M)^*$. The Galois inclusion $\mathbb{F}_q(T) \subset \mathbb{F}_{q^d}(T) \subset K_{q^d,M}$ gives rise to another short exact sequence: 
\[
\xymatrix{
	1 
	 \rightarrow \mathrm{Gal}(K_{q^d,M}
	 /\mathbb{F}_{q^d}(T)) \ar@{=}[d] \ar[r] &
	\mathrm{Gal}(K_{q^d,M}/\mathbb{F}_{q}(T)) \ar[r]^-\pi&
	\mathrm{Gal}(\mathbb{F}_{q^d}(T)/\mathbb{F}_q(T)) \ar@{=}[d] \rightarrow 1
	\\
	 G_{q^d,M} &  &  \mathrm{Gal}(\mathbb{F}_{q^d}/\mathbb{F}_q)
}
\]
By standard arguments of group theory, the Galois group $\mathrm{Gal}(\mathbb{F}_{q^d}/\mathbb{F}_q)$ acts by conjugation on $G_{q^d,M}$ in terms of an inverse section of the map $\pi$. We note that as the group $G_{q^d,M}$ is abelian, this action is well defined, i.e., independent of the choice of the section of $\pi$. 

\subsection{The group $G_{q^d,P}$ as a $\mathrm{Gal}(\mathbb{F}_{q^d}/\mathbb{F}_q)$-module} 
\label{frobAC}
We identify each divisor $D\in (A_d/MA_d)^*$ with the element $\rho_D \in G_{q^d,M}$ under the isomorphism \eqref{isomorphism}. The natural action of
 $\sigma \in \mathrm{Gal}(\mathbb{F}_{q^d}/\mathbb{F}_q)$ on $\rho_D$ is given by conjugation. 
	As such, we have
	\[
	\sigma \rho_D \sigma^{-1}=\rho_{\sigma (D)},
	\]
	where $\sigma(D)$ simply denotes the image of $D\in A_d$ under the natural action of $\sigma$ in $\mathbb{F}_{q^d}$. We have
$$ \rho_D (u):=D *_d u=u^{q^s} + b_{s-1}u^{q^{s-1}}+ \cdots + Du,$$
so that
$$\sigma \rho_D \sigma^{-1}(u) =u^{q^s} + \sigma(b_{s-1})u^{q^{s-1}}+ \cdots + \sigma(D)u,
 $$
where $*_d$ denotes the Carlitz action over $\mathbb{F}_{q^d}(T)$. In what follows, we emphasise that $G_{q,M}=\mathrm{Gal}(K_{q,M}/\mathbb{F}_{q}(T))$ is isomorphic to $\mathrm{Gal}(\mathbb{F}_{q^d}K_{q,M}/\mathbb{F}_{q^d}(T))$, as one may deduce from the tower of fields given in  \eqref{fig:tower}.

\begin{lemma} \label{Kummerfun} The group $H_{q^d,M}=\mathrm{Gal}(K_{q^{d},M}/\mathbb{F}_{q^d}K_{q,M})$ in the short exact sequence 
\begin{equation} \label{sesH}
1 \rightarrow H_{q^d,M} \rightarrow {G}_{q^d,M} \rightarrow G_{q,M} \rightarrow 1
\end{equation}
satisfies for $\sigma\in \mathrm{Gal}(\mathbb{F}_{q^d}/\mathbb{F}_q)$
\begin{enumerate}
	\item $\sigma(H_{q^d,M})=H_{q^d,M}$
	\item $H_{q^d,M}=(\sigma-1)G_{q^d,M}$.
\end{enumerate} 
\end{lemma}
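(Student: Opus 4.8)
The plan is to reduce both assertions to a single compatibility statement for the restriction homomorphism. Write $G := G_{q^d,M}$ and $H := H_{q^d,M}$, and let $\phi\colon G \to G_{q,M}$ be the map in \eqref{sesH}, so that $H = \ker\phi$ by definition. Since $K_{q,M}\subset K_{q^d,M}$ by Theorem \ref{inclusion}, the field $L:=\mathbb{F}_{q^d}K_{q,M}$ is a genuine intermediate field of $K_{q^d,M}/\mathbb{F}_{q^d}(T)$, and $\phi$ is realised as restriction of automorphisms to $L$, using the identification $G_{q,M}\cong\mathrm{Gal}(L/\mathbb{F}_{q^d}(T))$ recorded before the lemma. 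The entire statement will follow once I establish
\[
\phi\circ\sigma=\phi,\qquad\text{i.e.}\qquad \phi(\sigma(g))=\phi(g)\ \text{ for all } g\in G.
\]
Recall that $\sigma$ acts on $G$ by $\sigma(g)=\tilde\sigma g\tilde\sigma^{-1}$ for any lift $\tilde\sigma$ of $\sigma$, the result being independent of the lift because $G$ is abelian. The compositum $L$ is Galois over $\mathbb{F}_q(T)$ (being the compositum of the two Galois extensions $\mathbb{F}_{q^d}(T)$ and $K_{q,M}$ of $\mathbb{F}_q(T)$), hence $\tilde\sigma$-stable, so restriction is a homomorphism and $\phi(\sigma(g))=\bar\sigma\,\phi(g)\,\bar\sigma^{-1}$ with $\bar\sigma:=\tilde\sigma|_L$. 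As restriction $\mathrm{Gal}(K_{q^d,M}/\mathbb{F}_q(T))\to\mathrm{Gal}(L/\mathbb{F}_q(T))$ is surjective, I may choose $\tilde\sigma$ so that $\bar\sigma$ is the distinguished element of $\mathrm{Gal}(L/K_{q,M})$ acting as the Frobenius $\sigma$ on the constant field and fixing $K_{q,M}$ pointwise.

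The heart of the argument is to show that this $\bar\sigma$ centralises $G_{q,M}=\mathrm{Gal}(L/\mathbb{F}_{q^d}(T))$. I would argue directly with fixed fields rather than comparing the two Carlitz actions: every $h\in G_{q,M}$ fixes $\mathbb{F}_q(T)$, and since $K_{q,M}/\mathbb{F}_q(T)$ is Galois, $h$ stabilises $K_{q,M}$; because $\bar\sigma$ fixes $K_{q,M}$ pointwise, for $x\in K_{q,M}$ one computes $(\bar\sigma h\bar\sigma^{-1})(x)=\bar\sigma(h(x))=h(x)$, so $\bar\sigma h\bar\sigma^{-1}$ and $h$ agree on $K_{q,M}$. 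As the restriction $G_{q,M}\to\mathrm{Gal}(K_{q,M}/\mathbb{F}_q(T))$ is the isomorphism used in \eqref{fig:tower}, it is injective, whence $\bar\sigma h\bar\sigma^{-1}=h$. This gives $\phi(\sigma(g))=\bar\sigma\phi(g)\bar\sigma^{-1}=\phi(g)$, proving the displayed identity.

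From $\phi\circ\sigma=\phi$ both parts drop out. For (1), if $g\in H=\ker\phi$ then $\phi(\sigma(g))=\phi(g)=1$, so $\sigma(H)\subseteq H$, and equality holds because $\sigma$ is an automorphism of the finite group $G$. For (2), the map $\sigma-1\colon G\to G$, $g\mapsto\sigma(g)g^{-1}$, is a homomorphism (as $G$ is abelian) with kernel $G^\sigma$ and image $(\sigma-1)G$, and $\phi\circ\sigma=\phi$ says precisely that $(\sigma-1)G\subseteq\ker\phi=H$. To upgrade this inclusion to an equality I would compare orders: the first isomorphism theorem gives $|(\sigma-1)G|=|G|/|G^\sigma|$. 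Under $G\cong(A_d/MA_d)^*$ the action of $\sigma$ is coefficientwise, so $G^\sigma$ consists of the $\sigma$-fixed units, namely those lying in $(A_d/MA_d)^\sigma=A_1/MA_1$; a determinant argument over the free rank-$d$ extension $A_d/MA_d$ of $A_1/MA_1$ shows that such an element is already a unit in $A_1/MA_1$, so $G^\sigma=(A_1/MA_1)^*\cong G_{q,M}$. Hence $|(\sigma-1)G|=|G|/|G_{q,M}|=|H|$ by the exactness of \eqref{sesH}, and the inclusion $(\sigma-1)G\subseteq H$ between finite groups of equal order is an equality.

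I expect the main obstacle to be the centralising step in the second paragraph: one must resist the temptation to match the $q^d$-Carlitz action defining $G$ with the $q$-Carlitz action defining $G_{q,M}$, and instead extract the commutation purely from the facts that $\bar\sigma$ fixes $K_{q,M}$ pointwise and that restriction to $K_{q,M}$ is injective on $G_{q,M}$. The remaining ingredients—stability of $L$ under $\tilde\sigma$, surjectivity of restriction, and the order count—are routine.
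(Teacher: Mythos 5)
Your proposal is correct and takes essentially the same route as the paper: both proofs establish the inclusion $(\sigma-1)G_{q^d,M}\subseteq H_{q^d,M}$ by showing that the quotient $G_{q,M}\cong G_{q^d,M}/H_{q^d,M}$ is pointwise $\sigma$-fixed (your identity $\phi\circ\sigma=\phi$), and then conclude by the same order count, namely $|(\sigma-1)G_{q^d,M}| = |G_{q^d,M}|/|G_{q^d,M}^{\sigma}| = |G_{q^d,M}|/|G_{q,M}| = |H_{q^d,M}|$. The only differences are expository: you unpack the paper's one-line appeal to ``$G_{q,M}$ is the Galois group of a geometric extension'' into an explicit centralising argument with the distinguished lift $\bar\sigma$, and you justify the identification $G_{q^d,M}^{\sigma}\cong G_{q,M}$ (fixed units equal units of the fixed subring) where the paper simply asserts it.
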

\begin{proof} 
\begin{enumerate}
	\item 
The natural action on subgroups of a Galois groups is the conjugation action.
The field $$\mathbb{F}_{q^d}K_{q,M} =K_{q^d,M}^{H_{q^d,M}}$$ is a subfield of $K_{q^d,M}$ which is invariant, albeit not pointwise, under the action of $\mathrm{Gal}(\mathbb{F}_{q^d}/\mathbb{F}_q)$. Let $\sigma$ be a generator of the cyclic group $\mathrm{Gal}(\mathbb{F}_{q^d}/\mathbb{F}_q)$. As both
 $$\sigma(\mathbb{F}_{q^d} K_{q,M})=K_{q^d,M}^{\sigma H_{q^d,M}\sigma^{-1}}\text{ and }\sigma(\mathbb{F}_{q^d} K_{q,M})=\mathbb{F}_{q^d} K_{q,M},$$ we have that $\sigma H_{q^d,M} \sigma^{-1}=H_{q^d,M}$.
	\item 
	We will prove first that $(\sigma-1)D \in H_{q^d,M}$ for each element $D \in G_{q^d,M}$.
The group $G_{q^d,M}$ consists of classes of invertible elements $D\in \mathbb{F}_{q^d}[T] $ modulo $M$. Let $\sigma\in \mathrm{Gal}(\mathbb{F}_{q^d}/\mathbb{F}_q)$ be a generator of the cyclic Galois group. As $G_{q,M}$ is the Galois group of a geometric extension, it follows that the group $G_{q,M}=G_{q^d,M}/H_{q^d,M}$ is pointwise $\sigma$-invariant. Thus $\sigma(D)/D \in H_{q^d,M}$. 

Since the group $G_{q^d,M}$ is abelian, the map
\[
\xymatrix@R-2pc{
	G_{q^d,M} \ar[r]^\Phi & G_{q^d,M} \\
	\alpha \ar@{|->}[r] & \sigma(\alpha)\alpha^{-1}
}
\]
is a group homomorphism. The kernel of $\Phi$ consists of elements $D\in G_{q^d,M}\cong \left(\mathbb{F}_{q^d}[T]/M \right)^*$ which are left invariant under the action of $\sigma$, hence it is isomorphic to $G_{q,M}$. On the other hand, we have proven that $\mathrm{Im}(\Phi) \subset H_{q^d,M}$. 
Thus $|\mathrm{Im}(\Phi)|=|G_{q^d,M}|/|G_{q,M}|$. Since by definition $|G_{q,M}|=|G_{q^d,M}|/|H_{q^d,M}|$, we obtain $|\mathrm{Im}(\Phi)|=|H_{q^d,M}|$, and we arrive at 
\[
H_{q^d,M}= (\sigma-1) G_{q^d,M},
\]
concluding the proof.
\end{enumerate}
\end{proof}
\begin{remark}
The group $G_{q^d,M}$ is naturally a $\mathrm{Gal}(\mathbb{F}_{q^d}/\mathbb{F}_q)$-module. The group $G_{q,M}$ is the space of coinvariants
\[
G_{q,M}=(G_{q^d,M})_{\mathrm{Gal}(\mathbb{F}_{q^d}/\mathbb{F}_q)}=
\frac{G_{q^d,M}}{(\sigma-1)G_{q^d,M} }.
\]
\end{remark}

 \subsection{Reduction to irreducible factors of $M$}
We now reduce the computation of the structure of the group $H_{q^d,M}$ to the study of the corresponding groups for the irreducible components of $M$. Let $M\in \mathbb{F}_q[T]$ be of degree $k$, with factorization in 
$\mathbb{F}_q[T]$
\[
M=\prod_{j=1}^r M_j^{\alpha_j},
\]
where the polynomials $M_j$ are irreducible, monic and of degree $s_j|d$. In the finite field $\mathbb{F}_{q^d}$, the polynomials $M_j \in \mathbb{F}_q[T]$ factor into linear factors in $\mathbb{F}_{q^d}[T]$. We write
\[
M=\prod_{j=1}^r \prod_{i=1}^{s_j} (T-\rho_{i,j})^{\alpha_j}, \qquad \qquad \rho_{i,j} \in \mathbb{F}_{q^d}.
\]
The field $K_{q,M}$ is the compositum of the fields $K_{q,M_j^{\alpha_j}}$, each of which is, in turn, a subfield of $K_{q^d,M_j^{\alpha_j}}$. This yields the following diagram:
\[
\xymatrix@C=.5pc{
 &  &  K_{q^d,M} \ar@{-}[dd]
\ar@{-}[dll] \ar@{-}[dl] \ar@{-}[dr] \ar@{-}[drr]
  &  &   \\
K_{q^d,M_1^{\alpha_1}} \ar@{-}[dd]_{H_{q^d,M_1^{\alpha_1}}} & 
K_{q^d,M_2^{\alpha_2}} \ar@{-}[dd]_{H_{q^d,M_2^{\alpha_2}}}&  
 &  K_{q^d,M_{r-1}^{\alpha_{r-1}}} 
\ar@{-}[dd]^{H_{q^d,M_{r-1}^{\alpha_{r-1}}}}
  & K_{q^d,M_{r}^{\alpha_r}}
 \ar@{-}[dd]^{H_{q^d,M_r^{\alpha_r}}} 
 \\
  &  & { \mathbb{F}_{q^d} K_{q,M}\ar@{-}[rd] \ar@{-}[rrd] \ar@{-}[ld] \ar@{-}[lld]} & &
 \\
\mathbb{F}_{q^d}K_{q,M_1^{\alpha_1}} & 
\mathbb{F}_{q^d}K_{q,M_2^{\alpha_2}} & 
\cdots & 
\mathbb{F}_{q^d}K_{q,M_{r-1}^{\alpha_{r-1}}}  &
\mathbb{F}_{q^d}K_{q,M_r^{\alpha_r}} 
\\
 & & { \mathbb{F}_{q^d}(T)} 
\ar@{-}[ull] \ar@{-}[ul] \ar@{-}[ur] \ar@{-}[urr]
 &  &
}
\]
We have 
\[
	\mathrm{Gal}(K_{q^d,M}/\mathbb{F}_{q^d}(T))\cong 
	{\bigtimes}_{j=1}^r  \mathrm{Gal}(K_{q^d,M_j^{a_j}}/\mathbb{F}_{q^d}(T))
\]
and 
\[
	H_{q^d,M}= {\bigtimes}_{j=1}^r H_{q^d,M_j^{a_j}}.
\]
The problem of determining the structure of the group $H_{q^d,M}$ may therefore be reduced to determination of each of the groups $H_{q^d,M_i^{\alpha_i}}$. This means that we may assume $M$ to be a power $P^\alpha$ of an irreducible polynomial $P \in \mathbb{F}_q[T]$. Thus, as a consequence of Lemma \ref{Kummerfun}, we now study the group $G_{q^d,P}$, where $P$ is an irreducible polynomial in $\mathbb{F}_q[T]$ of degree $s | d$. The splitting field of $P$ is equal to $\mathbb{F}_{q^s}$. By definition, we have the following short exact sequence:
\begin{equation}
\label{ses-Galois}
1 \rightarrow P_{q^d,P^\alpha} \rightarrow G_{q^d,P^\alpha} \rightarrow G_{q^d,P} \rightarrow 1,
\end{equation}
where 
\[
	P_{q^d,P^\alpha}=\{D \in \mathbb{F}_{q^d}[T] \mod P^\alpha, D\equiv 1 \mod P\}.
\]
The field $K_{q^d,P^\alpha}$ is the compositum of a generalised Artin-Schreier extension with the  Kummer extension $K_{q^d,P}/\mathbb{F}_{q^d}(T)$. The subfield $\mathbb{F}_{q^d} K_{q,P^\alpha}$ has a similar decomposition, into a generalised Artin-Schreier extension with Galois group $\mathbb{Z}/q^{\alpha-1}\mathbb{Z}$  with the Kummer extension $\mathbb{F}_{q^d}K_{q,P}/\mathbb{F}_{q^d}(T)$.
We have the following tower of fields:
\[
\xymatrix{
	& K_{q^d,P^\alpha} \ar@{-}[dr] \ar@{-}[dl] \ar@{-}[d] & 
	\\
K_{q^d,P} \ar@{-}[d] &  
\mathbb{F}_{q^d}K_{q,P^\alpha}  \ar@{-}[dr] \ar@{-}[ld]
& K_{q^d,P^\alpha}^{G_{q^d,P}}	\ar@{-}[d]
\\
\mathbb{F}_{q^d}K_{q,P}  \ar@{-}[dr]&   
& \mathbb{F}_{q^d}K_{q,P^\alpha}^{G_{q,P}} 
\ar@{-}[dl] \\
& \mathbb{F}_{q^d}(T)
}
\]
\begin{lemma}
\label{Fi}
Let $\sigma\in \mathrm{Gal}(\mathbb{F}_{q^d}/\mathbb{F}_q)$, and for each $i=1,\ldots,\alpha$, let
$$
F_i:=K_{q,P^i}^{G_{q,P}}.
$$ 
Then for each $i=1,\ldots,\alpha$, the generator $\sigma$ of the Galois group $\mathrm{Gal}(\mathbb{F}_{q^d}/\mathbb{F}_q)$ satisfies
\begin{enumerate}
	\item 
 $\sigma(K_{q^d,P^i})=K_{q^d,P^i}$. 
 	\item 
 $\sigma(F_i)=F_i$.
\end{enumerate}
\end{lemma}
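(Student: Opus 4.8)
The plan is to deduce both statements from a single principle: a finite subextension of $\overline{\mathbb{F}_q(T)}$ is stable under every automorphism that fixes $\mathbb{F}_q(T)$ pointwise precisely when it is normal over $\mathbb{F}_q(T)$. Since $\sigma\in\mathrm{Gal}(\mathbb{F}_{q^d}/\mathbb{F}_q)$ fixes $\mathbb{F}_q$ pointwise and fixes $T$, any extension of $\sigma$ to $\overline{\mathbb{F}_q(T)}$ fixes $\mathbb{F}_q(T)$ pointwise, so it suffices to verify that $K_{q^d,P^i}$ and $F_i$ are each Galois over $\mathbb{F}_q(T)$.

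For (1), I would first observe that the Carlitz operator $C_{q^d}(P^i)$ is an additive polynomial in $u$ whose coefficients are polynomials in $T$ over the prime field, because $P\in\mathbb{F}_q[T]$ and $C_{q^d}(T)(u)=Tu+u^{q^d}$ already has coefficients in $\mathbb{F}_q[T]$; iterating preserves this. Hence the torsion module $C_{q^d}[P^i]$ is the zero set of a polynomial defined over $\mathbb{F}_q(T)$, so $\sigma$ permutes it, and since $\sigma$ preserves $\mathbb{F}_{q^d}(T)$ it preserves $K_{q^d,P^i}=\mathbb{F}_{q^d}(T)(C_{q^d}[P^i])$. Equivalently, $K_{q^d,P^i}$ is the compositum of $\mathbb{F}_{q^d}$ with the splitting field of $C_{q^d}(P^i)$ over $\mathbb{F}_q(T)$, hence Galois over $\mathbb{F}_q(T)$, which is also recorded by the isomorphism $\mathrm{Gal}(K_{q^d,P^i}/\mathbb{F}_q(T))\cong (A_d/P^iA_d)^*\times\mathrm{Gal}(\mathbb{F}_{q^d}/\mathbb{F}_q)$. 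Normality then gives $\sigma(K_{q^d,P^i})=K_{q^d,P^i}$.

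For (2), the key point is to identify how $\sigma$ acts once restricted to the geometric field $K_{q,P^i}$. By Theorem \ref{inclusion} we have $F_i\subset K_{q,P^i}\subset K_{q^d,P^i}$, and the argument of (1), now for the $q$-Carlitz operator, shows $K_{q,P^i}/\mathbb{F}_q(T)$ is Galois; moreover it is geometric, with constant field $\mathbb{F}_q$. Therefore any extension of $\sigma$ restricts to an automorphism of $K_{q,P^i}$ fixing $\mathbb{F}_q(T)$ pointwise, i.e. an element $\tau\in G_{q,P^i}$. Since $G_{q,P^i}$ is abelian, conjugation by $\tau$ fixes every subgroup, in particular the tame subgroup $G_{q,P}$ whose fixed field is $F_i$, so that
\[
\sigma(F_i)=\tau\bigl(K_{q,P^i}^{G_{q,P}}\bigr)=K_{q,P^i}^{\tau G_{q,P}\tau^{-1}}=K_{q,P^i}^{G_{q,P}}=F_i.
\]
Equivalently, $F_i/\mathbb{F}_q(T)$ is Galois, being the fixed field of a subgroup of the abelian group $G_{q,P^i}$, and normality again yields stability under $\sigma$.

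The normality and compositum bookkeeping is routine; the only genuine subtlety I expect is the observation that restricting $\sigma$ to the \emph{geometric} extension $K_{q,P^i}$ turns it into an element of the \emph{abelian} group $G_{q,P^i}$. Thus, although $\sigma$ acts nontrivially on $K_{q^d,P^i}$ (it moves the constants $\mathbb{F}_{q^d}$), its induced action on the subgroup lattice of $G_{q,P^i}$, and hence on $F_i$, is trivial. One should also check that the conclusion is independent of the chosen extension of $\sigma$, which holds because every such choice restricts into $G_{q,P^i}$.
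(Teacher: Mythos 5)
Your proposal is correct. Part (1) is essentially the paper's argument: the paper simply asserts $\sigma(K_{q^d,P^i})=K_{q^d,P^i}$ ``by construction,'' and you supply the underlying reason, namely that $C_{q^d}(P^i)(u)$ has coefficients in $\mathbb{F}_q[T]$, so any extension of $\sigma$ permutes the torsion set and preserves the compositum with $\mathbb{F}_{q^d}(T)$. Part (2), however, follows a genuinely different route. The paper works over $\mathbb{F}_{q^d}(T)$: it decomposes the abelian group $\mathrm{Gal}(K_{q^d,P^\alpha}/\mathbb{F}_{q^d}(T))$ as the product of its tame part with a $p$-group, observes that $\sigma(F_\alpha)$ corresponds to a subgroup isomorphic to the tame part, invokes uniqueness of such a subgroup (it is the prime-to-$p$ torsion part, hence characteristic), and handles $i<\alpha$ by induction. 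You instead restrict an extension of $\sigma$ to $K_{q,P^i}$, note that since $K_{q,P^i}/\mathbb{F}_q(T)$ is Galois this restriction is an element $\tau$ of the abelian group $G_{q,P^i}$, and conclude that \emph{every} intermediate field of $K_{q,P^i}/\mathbb{F}_q(T)$, in particular $F_i$, is stable, because all subgroups of an abelian group are normal. Your route is more elementary and uniform in $i$ (no counting, no induction), and it has the additional merit of matching the statement's literal definition $F_i=K_{q,P^i}^{G_{q,P}}$; the paper's proof in fact treats $F_\alpha$ as a subfield of $K_{q^d,P^\alpha}$ containing $\mathbb{F}_{q^d}(T)$, a slight mismatch between statement and proof that your argument sidesteps. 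What the paper's argument buys in exchange is the sharper structural fact that the tame subgroup of $G_{q^d,P^\alpha}$ is preserved by the conjugation action of $\sigma$, which is the form of the lemma actually exploited immediately afterwards to see that $(\sigma-1)G_{q^d,P}$ lands in $H_{q^d,P}$; if you wanted that consequence from your approach you would need the supplementary remark that the prime-to-$p$ part of $G_{q^d,P^\alpha}$ is characteristic, which is exactly the paper's uniqueness observation.
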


\begin{proof} By construction, we have $\sigma(\mathbb{F}_{q^d}(T))=\mathbb{F}_{q^d}(T)$ and
 $\sigma(K_{q^d,P^i})=K_{q^d,P^i}$, for each $i=1,\ldots,\alpha$. The Galois group $\mathrm{Gal}(K_{q^d,P^\alpha}/\mathbb{F}_{q^d}(T))$ is congruent to the direct product of the cyclic group $\mathbb{F}_{q^d}^*$ with the $p$-group $\mathrm{Gal}(F_\alpha/\mathbb{F}_{q^d}(T))$. By definition, we have that $\mathbb{F}_{q^d}(T) \subset \sigma(F_\alpha) \subset K_{q^d,P^\alpha}$, whence $\sigma(F_\alpha)$ corresponds to a Galois group $H$ which is isomorphic to a cyclic group of order $q^d-1$. As there is a unique such subgroup in $\mathrm{Gal}(K_{q^d,P^\alpha}/\mathbb{F}_{q^d}(T))$, it follows that $\sigma(F_\alpha)=F_\alpha$. 

 The result for $F_i$ for each $i=1,\ldots, \alpha-1$ follows by induction.
\end{proof}

We note that property (2) in Lemma \ref{Fi} implies that $(\sigma-1)G_{q^d,P}$ corresponds to a subgroup of $H_{q^d,P}$, and by order comparisons, we obtain that the unique such submodule of $G_{q^d,P}$ is given by the image of the map $\sigma-1$. For a realisation of a Kummer model of $K_{q,P}$, we refer to Section \ref{KummerSection}.

\section{Tame structure}
\label{KummerSection}

In order to understand $H_{q^d,P}$, we will describe the character group of $G_{q^d,P}$ using the torsion of the Carlitz module. For an introduction to Kummer theory of extensions the reader is reffered to \cite{Jacobson2}.
Let $P=\prod_{i=1}^s (T-\rho_i)$ be the decomposition of the irreducible polynomial $P\in \mathbb{F}_q[T]$ in $\mathbb{F}_{q^d}[T]$.
We know by prime decomposition \cite[Chapter 12]{salvador2007topics} that 
\begin{equation}\label{Chinese}
C_{q^d}[P]=\bigoplus_{i=1}^{ s} 
C_{q^d}[T-\rho_i].
\end{equation}
The torsion modules $C_{q^d}[T-\rho_i]$ are defined as 
\[
C_{q^d}[T-\rho_i]:=\{z \in \overline{\mathbb{F}_q(T)}: z^{q^d}+(T-\rho_i)z=0 \}.
\]
Let $\lambda_i$ be a generator of $C_{q^d}[T-\rho_i]$ as an $A_d$-module. Then $\lambda_i$ satisfies the equation
\begin{equation}
	\label{lambdaKummer}
	\lambda_i^{q^d-1}=-(T-\rho_i).
\end{equation}
\begin{lemma}
	\label{Galdact}
	Let $\sigma$ be a generator of the cyclic group $\mathrm{Gal}(\mathbb{F}_{q^d}/\mathbb{F}_q)$. Then
	\[
	\sigma(\lambda_i)=
	\begin{cases}
	\zeta_{\sigma,i} \lambda_{i+1} & \mbox{ if } 1 \leq i< {s}\\
	\zeta_{\sigma,d} \lambda_1 & \mbox{ if } i={s},
	\end{cases}
	\]
	where $\zeta_{\sigma,i}$ is a $(q^{d}-1)$st root of unity, which depends on both $\sigma$ and $i$.
\end{lemma}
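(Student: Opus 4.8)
The plan is to apply $\sigma$ directly to the defining Kummer relation \eqref{lambdaKummer} and to read off the resulting multiplicative ambiguity. First I would pin down how $\sigma$ permutes the roots $\rho_1,\dots,\rho_s$ of $P$. Writing $\sigma=\phi^a$ for the $q$-power Frobenius $\phi$, the hypothesis that $\sigma$ generates $\mathrm{Gal}(\mathbb{F}_{q^d}/\mathbb{F}_q)$ means $\gcd(a,d)=1$; since $s\mid d$, this forces $\gcd(a,s)=1$, so that the restriction of $\sigma$ to the splitting field $\mathbb{F}_{q^s}$ of $P$ generates $\mathrm{Gal}(\mathbb{F}_{q^s}/\mathbb{F}_q)$. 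As $P$ is irreducible over $\mathbb{F}_q$, its roots form a single orbit under this cyclic group, so $\sigma$ acts on $\{\rho_1,\dots,\rho_s\}$ as a single $s$-cycle. We may therefore label the roots, and correspondingly the generators $\lambda_i$, so that $\sigma(\rho_i)=\rho_{i+1}$ for $1\le i<s$ and $\sigma(\rho_s)=\rho_1$.

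Next I would use that $\sigma$, regarded as a lift to $\mathrm{Gal}(K_{q^d,P}/\mathbb{F}_q(T))$ (such a lift stabilises $K_{q^d,P}$ by part (1) of Lemma \ref{Fi}), is a field automorphism fixing $T$ and $\mathbb{F}_q$. Applying $\sigma$ to \eqref{lambdaKummer} and using $\sigma(T)=T$ together with $\sigma(\rho_i)=\rho_{i+1}$ gives
\[
\sigma(\lambda_i)^{q^d-1}=\sigma\bigl(-(T-\rho_i)\bigr)=-(T-\rho_{i+1})=\lambda_{i+1}^{q^d-1}
\]
for $1\le i<s$, and the analogous identity $\sigma(\lambda_s)^{q^d-1}=-(T-\rho_1)=\lambda_1^{q^d-1}$ in the wrap-around case. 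Hence the ratio $\zeta_{\sigma,i}:=\sigma(\lambda_i)/\lambda_{i+1}$ (respectively $\sigma(\lambda_s)/\lambda_1$) satisfies $\zeta_{\sigma,i}^{q^d-1}=1$.

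Finally I would conclude that this ratio is a constant root of unity: the polynomial $X^{q^d-1}-1$ is separable over $\mathbb{F}_q$ and splits completely in $\mathbb{F}_{q^d}$, so any solution of $\zeta^{q^d-1}=1$ in $\overline{\mathbb{F}_q(T)}$ lies in $\mathbb{F}_{q^d}^*$ and is exactly a $(q^d-1)$st root of unity. This yields $\sigma(\lambda_i)=\zeta_{\sigma,i}\lambda_{i+1}$ with $\zeta_{\sigma,i}$ a $(q^d-1)$st root of unity, as claimed. Equivalently, one can avoid the explicit computation by noting that $C_{q^d}[T-\rho]=\mathbb{F}_{q^d}\lambda$ is a one-dimensional $\mathbb{F}_{q^d}$-vector space, that $\sigma$ carries $C_{q^d}[T-\rho_i]$ isomorphically onto $C_{q^d}[T-\rho_{i+1}]$, and that any two generators of the same such line differ by a scalar in $\mathbb{F}_{q^d}^*$.

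I expect the only genuine subtlety to lie in the first step, namely verifying that a generator of the larger Galois group still restricts to a generator on the splitting field $\mathbb{F}_{q^s}$, which is what legitimises the cyclic relabelling $\sigma(\rho_i)=\rho_{i+1}$; everything after that is a one-line manipulation of the Kummer equation. The dependence of $\zeta_{\sigma,i}$ on the chosen lift of $\sigma$ is harmless, since the statement only asserts the existence of such a root of unity.
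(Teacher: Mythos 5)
Your proposal is correct and follows essentially the same route as the paper: apply $\sigma$ to the Kummer relation \eqref{lambdaKummer}, use $\sigma(\rho_i)=\rho_{i+1}$ (cyclically), and conclude that $\sigma(\lambda_i)/\lambda_{i+1}$ is a $(q^d-1)$st root of unity. Your additional care in justifying the cyclic relabelling (via $\gcd(a,d)=1$ and $s\mid d$) and in noting that any solution of $\zeta^{q^d-1}=1$ in $\overline{\mathbb{F}_q(T)}$ actually lies in $\mathbb{F}_{q^d}^*$ fills in details the paper leaves implicit, but does not change the argument.
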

\begin{proof}
Consider the action of  $\sigma$ on \eqref{lambdaKummer}. If $i< s$, then
\[
(\sigma \lambda_i)^{q^d-1}=-\sigma(T-\rho_i)=-(T-\sigma(\rho_i))=-(T-\sigma_{i+1})=\lambda_{i+1}^{q^d-1}.
\]	
The result follows in this case. The proof for $i=s$ is the same, except that $\sigma(\rho_s)=\rho_1$. 
\end{proof}
On the other hand, the action of $G_{q^d,P}$ on $\lambda_i$ is given by multiplication by elements in $\mathbb{F}_{q^d}^*$. Indeed, 
if $f\in \mathbb{F}_{q^d}[T]$ and $(f,P)=1$, then $f(\rho_i) \neq 0$ for all $i=1,\ldots,s$. By definition of the Carlitz action $*_d$, 
and the fact that $\lambda_i$ is a $(T-\rho_i)$-torsion point it follows that 
\begin{equation} \label{*daction}
\sigma_f \lambda_i= f *_d \lambda_i=f(\rho_i)\cdot \lambda_i. 
\end{equation}
We note that $f(\rho_i) \in \mathbb{F}_{q^d}$, whence $f(\rho_i)^{q^d-1}=1$. 
\begin{definition}
	Let $\zeta$ be a fixed choice of primitive $(q^{d}-1)$st root of unity. We define the dual elements $\{\sigma_k^*\} \in G_{q^d,P}^*$ such that 
	\[
	\sigma_k^*(\sigma_\ell)=
	\begin{cases}
		1 & \mbox{ if } k\neq \ell \\
		\zeta & \mbox{ if } k=\ell.
	\end{cases}
	\]
\end{definition}

\begin{remark}
As $s \mid d$ we have that $(q^s-1) \mid (q^d-1)$. The element $$\zeta_1:=\zeta^{\frac{q^d-1}{q^s-1}}$$ generates a cyclic subgroup of $G_{q^d,P}$ of order $q^s-1$.
\end{remark}

\begin{lemma} For each $i_0 = 1,\ldots,s$, consider the polynomials 
	\[
	f_{i_0}(x):=\left(1-
	  \prod_{\substack{i=1 \\ i\neq i_0}}^{s} (x-\rho_i)\right)=1-\frac{P(x)}{x-\rho_{i_0}} \in \mathbb{F}_{q^d}[x].
	\] 
Then
	\[
	\sigma_{f_{i_0}}*_d \lambda_j= 
	\left(1- 
	\prod_{\substack{i=1 \\ i\neq i_0}}^{{s}}  (\rho_{j}-\rho_i) 
	\right)
	\cdot \lambda_j.
	\]
\end{lemma}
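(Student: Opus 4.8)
The plan is to deduce the statement directly from the Carlitz evaluation identity \eqref{*daction}, so that the only real work is a single substitution. The governing principle is that the Carlitz action of \emph{any} polynomial on a torsion generator $\lambda_j$ is scalar multiplication by the value of that polynomial at the corresponding root.

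First I would recall why $f *_d \lambda_j = f(\rho_j)\lambda_j$ holds for every $f\in\mathbb{F}_{q^d}[T]$. Since $\lambda_j$ generates $C_{q^d}[T-\rho_j]$, equation \eqref{lambdaKummer} gives $\lambda_j^{q^d}=-(T-\rho_j)\lambda_j$, so that $T *_d \lambda_j = T\lambda_j+\lambda_j^{q^d}=\rho_j\lambda_j$; that is, $T$ acts on $\lambda_j$ as the scalar $\rho_j$. Because $\rho_j\in\mathbb{F}_{q^d}$ we have $\rho_j^{q^d}=\rho_j$, and a short induction on $k$ shows $C_{q^d}^k(T)(\lambda_j)=\rho_j^k\lambda_j$ (the inductive step uses $C_{q^d}(T)(\rho_j^k\lambda_j)=\rho_j^k(T\lambda_j+\lambda_j^{q^d})$, exploiting $\rho_j^{kq^d}=\rho_j^k$). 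Summing over the coefficients of $f$ yields $f *_d \lambda_j=f(\rho_j)\lambda_j$, which is exactly \eqref{*daction}.

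Next I would apply this with $f=f_{i_0}$, giving $\sigma_{f_{i_0}} *_d \lambda_j=f_{i_0}(\rho_j)\lambda_j$, and evaluate $f_{i_0}(\rho_j)$. Here I would use the product form $f_{i_0}(x)=1-\prod_{i\neq i_0}(x-\rho_i)$ rather than $1-P(x)/(x-\rho_{i_0})$, since the latter is formally $0/0$ at $x=\rho_{i_0}$ while the former is a genuine polynomial of degree $s-1$. Direct substitution $x\mapsto\rho_j$ then produces $f_{i_0}(\rho_j)=1-\prod_{i\neq i_0}(\rho_j-\rho_i)$, which is precisely the claimed coefficient, completing the argument.

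I do not expect any serious obstacle; the only point requiring care is the bookkeeping in the evaluation. When $j\neq i_0$ the index $i=j$ appears in the product $\prod_{i\neq i_0}(\rho_j-\rho_i)$, so the factor $(\rho_j-\rho_j)=0$ forces the product to vanish and $f_{i_0}(\rho_j)=1$; when $j=i_0$ the product equals $\prod_{i\neq i_0}(\rho_{i_0}-\rho_i)=P'(\rho_{i_0})$, which is nonzero by separability of the irreducible $P$ but need not equal $1$. The uniform formula in the statement already subsumes both cases, so no case split is needed in the final write-up. I would also note that since $f_{i_0}(\rho_j)=1\neq 0$ for every $j\neq i_0$, the operator $\sigma_{f_{i_0}}$ is invertible modulo $P$ precisely when $P'(\rho_{i_0})\neq 1$; otherwise the identity is to be read as describing the Carlitz operator $f_{i_0} *_d$ directly.
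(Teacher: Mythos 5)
Your proposal is correct and takes essentially the same route as the paper: the paper's entire proof is the citation of the evaluation identity \eqref{*daction}, i.e.\ $f *_d \lambda_j = f(\rho_j)\,\lambda_j$, and you simply rederive that identity from $T *_d \lambda_j = T\lambda_j + \lambda_j^{q^d} = \rho_j\lambda_j$ (via \eqref{lambdaKummer}) together with $\mathbb{F}_{q^d}$-linearity, then substitute $f = f_{i_0}$ and evaluate at $\rho_j$. The additional bookkeeping you carry out (the induction on powers of $T$, the case split $j \neq i_0$ versus $j = i_0$, and the remark on when $f_{i_0}$ is coprime to $P$) is sound and merely makes explicit what the paper leaves implicit.
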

\begin{proof} The proof follows immediately from \eqref{*daction}. \end{proof}
\begin{lemma} For each $i_0=1,\ldots,s$, consider the elements
\[
Z_{i_0}:=\left(1- 
	\prod_{\substack{i=1 \\ i\neq i_0}}^{s} (\rho_{i_0}-\rho_i) 
	\right).
\]
Then
\[
Z_{i_0}=Z_1^{q^{i_0-1}}.
\]
Furthermore, the elements $Z_{i_0}$ are primitive $(q^{s}-1)$st roots of unity.
\end{lemma}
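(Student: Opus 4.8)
The plan is to recognise $Z_{i_0}$ as a value of the formal derivative of $P$ and then to exploit the Frobenius structure of the roots $\rho_i$. The starting observation is that $\prod_{i\neq i_0}(x-\rho_i)=P(x)/(x-\rho_{i_0})$, whose value at $x=\rho_{i_0}$ is exactly $P'(\rho_{i_0})$, the formal derivative of $P=\prod_i(x-\rho_i)$ evaluated at the root $\rho_{i_0}$ (every other term of $P'$ carries a factor $(x-\rho_{i_0})$ and so vanishes at $\rho_{i_0}$). Hence $Z_{i_0}=1-P'(\rho_{i_0})$, and the entire statement is reduced to facts about the single element $P'(\rho_{i_0})$.

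For part (1), I would use that, in the labelling fixed in Lemma \ref{Galdact}, the generator $\sigma$ of $\mathrm{Gal}(\mathbb{F}_{q^d}/\mathbb{F}_q)$ cyclically permutes the roots via $\sigma(\rho_i)=\rho_{i+1}$, so that $\rho_{i_0}=\sigma^{i_0-1}(\rho_1)=\rho_1^{q^{i_0-1}}$. Since $P\in\mathbb{F}_q[T]$, the derivative $P'$ also has coefficients in $\mathbb{F}_q$, which are fixed by $\sigma$; therefore $P'(\rho_{i_0})=P'(\rho_1^{q^{i_0-1}})=P'(\rho_1)^{q^{i_0-1}}$, the last equality being additivity of the $q$-power map in characteristic $p$ together with $c^{q^{i_0-1}}=c$ for $c\in\mathbb{F}_q$. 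Applying the same additivity to $1-P'(\rho_1)$ gives $Z_{i_0}=1-P'(\rho_1)^{q^{i_0-1}}=(1-P'(\rho_1))^{q^{i_0-1}}=Z_1^{q^{i_0-1}}$, which is the asserted relation and incidentally records that $\sigma(Z_{i_0})=Z_{i_0+1}$ cyclically.

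For part (2), the first step is membership: since $\rho_{i_0}$ lies in the splitting field $\mathbb{F}_{q^s}$ of $P$ and $P'\in\mathbb{F}_q[T]$, we have $Z_{i_0}\in\mathbb{F}_{q^s}$; equivalently, taking the exponent $i_0-1=s$ in part (1) yields $Z_1^{q^s}=Z_1$, so that, once $Z_{i_0}\neq 0$, it satisfies $Z_{i_0}^{q^s-1}=1$ and is a $(q^s-1)$st root of unity. To upgrade this to primitivity I would first show that $Z_1$ generates $\mathbb{F}_{q^s}$ over $\mathbb{F}_q$, i.e.\ that the conjugates $Z_1,\ldots,Z_s$ produced in part (1) are pairwise distinct so that $Z_1$ lies in no proper subfield, and then pin down its exact multiplicative order. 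The natural handle here is the norm $N_{\mathbb{F}_{q^s}/\mathbb{F}_q}(Z_1)=\prod_{i_0=1}^{s}Z_{i_0}$, together with the necessary condition that any generator of $\mathbb{F}_{q^s}^{*}$ must have norm generating $\mathbb{F}_q^{*}$.

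I expect the genuine obstacle to be precisely this last point: establishing that the order of $Z_{i_0}$ is exactly $q^s-1$, as opposed to merely that $Z_{i_0}\in\mathbb{F}_{q^s}^{*}$. Membership and part (1) are formal consequences of the Frobenius action and amount to routine bookkeeping, but controlling the precise multiplicative order of the explicit element $1-P'(\rho_{i_0})$ is delicate and appears to require either the norm computation above or additional input on how $P'$ interacts with the Carlitz--Kummer generators $\lambda_{i_0}$, recalling that $\sigma_{f_{i_0}}$ acts on $\lambda_{i_0}$ by multiplication by $Z_{i_0}$ and trivially on the remaining $\lambda_j$. I would therefore concentrate the effort on the order computation, since the preceding steps follow directly from the structure already set up.
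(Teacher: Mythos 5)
Your handling of the first claim is correct and is, in substance, the paper's own argument. The paper takes $\rho_i=\zeta_1^{q^{i-1}}$ with $\zeta_1$ primitive, reindexes the product $\prod_{i\neq i_0}(\rho_{i_0}-\rho_i)$, and pulls the whole expression through the $q^{i_0-1}$-power map; your identity $Z_{i_0}=1-P'(\rho_{i_0})$ together with $P'(\rho_1^{q^{i_0-1}})=P'(\rho_1)^{q^{i_0-1}}$ (valid because $P'\in\mathbb{F}_q[T]$) is the same computation packaged through the derivative. There is no difference of any consequence in that part.

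The gap you flagged is real: you never establish that $Z_{i_0}$ has exact multiplicative order $q^s-1$, only that $Z_{i_0}\in\mathbb{F}_{q^s}$, hence is a $(q^s-1)$st root of unity provided it is nonzero. But compare this with the paper's own proof, which has the same hole: after preliminary remarks that a cyclic group of order $q^s-1$ has $\phi(q^s-1)$ generators and that $s\mid\phi(q^s-1)$ --- facts which say nothing about the order of the particular element $Z_1$ --- the paper performs only the conjugation computation $Z_{i_0}=Z_1^{q^{i_0-1}}$ and then declares the proof concluded. In fact no argument could close this gap, because the primitivity claim is false as stated: for $q=2$ and $P=T^2+T+1$ (so $s=2$, and the roots are primitive cube roots of unity, so the paper's standing conventions hold), one has $P'=1$ and hence $Z_{i_0}=1-P'(\rho_{i_0})=0$, which is not a root of unity at all; $P=T^4+T+1$ over $\mathbb{F}_2$ fails the same way. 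Consequently your planned norm computation $\prod_{i_0=1}^{s}Z_{i_0}=Z_1^{1+q+\cdots+q^{s-1}}=N_{\mathbb{F}_{q^s}/\mathbb{F}_q}(Z_1)$ cannot succeed in general; primitivity can hold only under additional hypotheses on $P$ (at the very least $Z_1\neq 0$). Your instinct to isolate the order computation as the genuine obstacle was exactly right: it is precisely the point at which both your proposal and the paper's proof are incomplete, and in the paper's case the statement itself needs to be weakened or given extra hypotheses.
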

\begin{proof}
The number of generators of a cyclic group of order $q^{s}-1$ is equal to $\phi(q^{s}-1)$, where $\phi$ denotes the Euler totient function. Furthermore, it is well-known that ${s} \mid \phi(q^{s}-1)$. We find, 
 recall that  $\rho_1=\zeta_1$ and $\rho_i=\zeta_1^{q^{i-1}}$ for $1\leq i \leq {s}-1$,
	\begin{align*}
	Z_{i_0} & =  \left(1-\prod_{\substack{i=1 \\ i\neq i_0}}^{{{s}}}
	   (\zeta_1^{q^{i_0-1}}-\zeta_1^{q^{i-1}})
	\right) \\
	&=  \left(1-\prod_{i=2 }^{{{s}}}
	   (\zeta_1^{q^{i_0-1}}-\zeta_1^{q^{i+i_0-2}})
	   \right)\\
	& =  \left(1-\prod_{i=2 }^{{{s}}}
	   (\zeta_1-\zeta_1^{q^{i-1}})
	   \right)^{q^{i_0-1}} 
	 =  Z_1^{q^{i_0-1}},
	\end{align*}
	which concludes the proof.
\end{proof}
Recall that  each generator $\lambda_i$ of
$C_{q^d}[T-\rho_i]$ satisfies 
\eqref{lambdaKummer}. 
The elements $\lambda_i$ define characters $\sigma_{\lambda_i}^*=\chi_{\lambda_i}$ by Kummer theory, which are given by
\[
\sigma_{\lambda_i}^*(\sigma)=\lambda_i^{\sigma}\lambda_i^{-1},
\]
for each $\sigma \in G_{q^d,P}$. In particular, we find that
\[
\sigma_{\lambda_i}^*(\sigma_{f_j})=\lambda_{i}^{\sigma_{f_j}} \lambda_{i}^{-1}=
\begin{cases}
Z_1^{q^{j-1}} & \mbox{ if } i= j \\
1  & \mbox{ if } i \neq j.
\end{cases}
\]
We now write $Z_1=\zeta_1^\alpha$ for some $\alpha \in \mathbb{N}$,
$(\alpha,q^{{s}}-1)=1$. The character group of $G_{q^d,P}$ is non-canonically isomorphic to $G_{q^d,P}$. Moreover, letting 
{$n = q^s - 1$}, the quotient map in \eqref{sesH} gives  
\[G_{q^d,P} \rightarrow \mathbb{Z}/n \mathbb{Z} \rightarrow 1,\]
which by duality yields the injection of the cyclic group $\mathbb{Z}/n\mathbb{Z}$
\[
1 \rightarrow \mathbb{Z}/n\mathbb{Z} \rightarrow G_{q^d,P}^*.
\]
For each $i=1,\ldots,{{s}}$, let $\sigma_i$ be chosen generators of the $i$th direct summand in the decomposition given in \eqref{Chinese} of the group $G_{q^d,P}$, and consider a dual basis $\sigma^*_i$ of $G_{q^d,P}^*$ such that 
\[\sigma_i^* \sigma_j=\delta_{ij} \rho_1{=\delta_{ij}\zeta_1 },\] 
where $\delta_{ij}$ is Kronecker's $\delta$. An injection $$\iota: \langle g \rangle=\mathbb{Z}/n\mathbb{Z}\rightarrow G_{q^d,P}^*$$ is described by giving the coordinates of the generator $g$, i.e.,
\begin{equation} \label{define-chara}
\iota(g)=\prod_{i=1}^{{s}} \left( \sigma_i^*\right)^{ 
\frac{q^d-1}{q^s-1}b_i}.
\end{equation}
Furthermore, the element $\iota(g)$ has order $n$ if, and only if, at least one of the integers $b_i$ is prime to $n$. Also, the character $\iota(g)$ given in \eqref{define-chara} is associated (via the Kummer correspondence) to the element 
\begin{equation} \label{L}
  L=\prod_{i=1}^{{s}}
  \lambda_i^{\frac{q^d-1}{q^s-1}b_i}, \mbox{ which satisfies } L^{q^s-1}=
  (-1)^s
 \prod_{i=1}^{{s}} (T-\rho_i)^{b_i}.  
\end{equation}
\begin{lemma} \label{KummerGen} For each $i=1,\ldots,{{s}}$, the exponent $b_i$ in \eqref{L} may be explicitly and recursively determined.
\end{lemma}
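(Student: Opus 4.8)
The plan is to pin down the exponents $b_i$ by the single requirement that the character $\iota(g)$ of \eqref{define-chara} descend to the quotient $G_{q,P}$, and then to unwind this requirement into an explicit congruence linking consecutive $b_i$. By Lemma \ref{Kummerfun}, the characters of $G_{q^d,P}$ that factor through $G_{q,P}$ are precisely those trivial on $H_{q^d,P}=(\sigma-1)G_{q^d,P}$; writing $\chi_L=\iota(g)$ for the character attached to $L$ in \eqref{L}, this says $\chi_L(\rho_{\sigma(D)}\rho_D^{-1})=1$, i.e. $\chi_L(\rho_{\sigma(D)})=\chi_L(\rho_D)$ for every $D\in(\mathbb{F}_{q^d}[T]/P)^*$. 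This is the only input I need; the recursion will drop out of it.

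First I would make $\chi_L$ fully explicit. By \eqref{*daction} we have $\rho_D(\lambda_i)=D(\rho_i)\lambda_i$, so the Kummer character attached to $\lambda_i$ satisfies $\chi_{\lambda_i}(\rho_D)=D(\rho_i)$, and therefore, setting $c_i:=\tfrac{q^d-1}{q^s-1}b_i$,
\[
\chi_L(\rho_D)=\prod_{i=1}^{s} D(\rho_i)^{c_i}.
\]
Next I would compute the Frobenius twist on the coordinates. Since $\rho_i=\zeta_1^{q^{i-1}}$ yields $\rho_{i-1}^{\,q}=\rho_i$, for $D=\sum_j a_jT^j$ one finds $\sigma(D)(\rho_i)=\sum_j a_j^{\,q}\rho_i^{\,j}=\bigl(\sum_j a_j\rho_{i-1}^{\,j}\bigr)^{q}=D(\rho_{i-1})^{q}$, with indices read cyclically modulo $s$. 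Substituting and reindexing $j=i-1$ gives
\[
\chi_L(\rho_{\sigma(D)})=\prod_{i=1}^{s} D(\rho_{i-1})^{q c_i}=\prod_{j=1}^{s} D(\rho_j)^{q c_{j+1}}.
\]

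The descent condition then reads $\prod_j D(\rho_j)^{q c_{j+1}}=\prod_j D(\rho_j)^{c_j}$ for all $D$. As the coordinates $D(\rho_j)$ range independently over the cyclic group $\mathbb{F}_{q^d}^*$ of order $q^d-1$, this forces $q c_{j+1}\equiv c_j\pmod{q^d-1}$ for each $j$; cancelling $m:=\tfrac{q^d-1}{q^s-1}$ produces the recursion
\[
q\,b_{j+1}\equiv b_j \pmod{q^s-1},\qquad b_{s+1}:=b_1.
\]
Because $\gcd(q,q^s-1)=1$ this gives $b_{j+1}\equiv q^{-1}b_j$, hence $b_i\equiv q^{-(i-1)}b_1\pmod{q^s-1}$, so every exponent is fixed explicitly once $b_1$ is chosen; running once around the cycle returns $b_1\equiv q^{-s}b_1\equiv b_1$ since $q^{s}\equiv1\pmod{q^s-1}$, so the recursion is consistent. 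Choosing $b_1$ with $\gcd(b_1,q^s-1)=1$ then makes each $b_i$ a unit, and by the criterion following \eqref{define-chara} guarantees that $\iota(g)$ has the full order $n=q^s-1$.

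I expect the main obstacle to be the Frobenius bookkeeping in the middle step. The subtlety is that applying $\sigma$ directly to $L$ and reading off its Kummer character would silently drop a $q$-th power, since $\sigma$ acts as the $q$-power map on the root-of-unity values of $\chi_L$; routing the computation instead through the intrinsic descent condition $\chi_L(\rho_{\sigma(D)})=\chi_L(\rho_D)$ is what correctly produces the factor $q$ in the recursion, rather than the spurious conclusion that all $b_i$ coincide. The remaining work—tracking the cyclic index shift $i\mapsto i-1$ and reducing modulo $q^s-1$ after cancelling $m$—is routine.
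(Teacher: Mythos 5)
Your proof is correct, and it takes a genuinely different route from the paper's. The paper applies $\sigma$ directly to $L$, writes $\sigma(L)=L^{\mu}a^{q^s-1}$ by Kummer theory, and matches exponents using the cyclic permutation $\sigma(\lambda_i)=\zeta_{\sigma,i}\lambda_{i+1}$ of Lemma \ref{Galdact} to obtain $b_{i-1}\equiv \mu b_i \pmod{q^s-1}$; it then still has to identify $\mu$, and it only establishes $(\mu')^{s}\equiv 1 \pmod{q^s-1}$ before ``selecting'' $\mu'=q$. You instead impose the descent condition of Lemma \ref{Kummerfun} --- $\chi_L$ must be trivial on $H_{q^d,P}=(\sigma-1)G_{q^d,P}$ --- and make it explicit through the CRT coordinates $D\mapsto (D(\rho_i))_i$ and the twist formula $\sigma(D)(\rho_i)=D(\rho_{i-1})^q$ (the same formula underlying the paper's Lemma \ref{H-filt}). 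Your derivation leaves no undetermined constant, which is exactly what the approach buys.

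Moreover, your answer and the paper's disagree, and yours is the correct one. Your recursion $q\,b_{j+1}\equiv b_j$, i.e.\ $b_i\equiv q^{-(i-1)}b_1$, corresponds to $\mu'\equiv q^{-1}\equiv q^{s-1}$, while the paper's choice $\mu'=q$ gives $b_i\equiv q^{i-1}b_1$; the two determine the same character group only when $q^2\equiv 1\pmod{q^s-1}$, i.e.\ for $s\le 2$, which is why small examples hide the discrepancy. The value $\mu\equiv q$ (rather than $\mu'\equiv q$) is forced: since $(\sigma-1)G_{q^d,P}\subseteq H_{q^d,P}$, any Frobenius lift commutes with $\mathrm{Gal}(\mathbb{F}_{q^d}K_{q,P}/\mathbb{F}_{q^d}(T))$, so $\bar\tau(\sigma(L))=\sigma(\bar\tau(L))=\chi_L(\bar\tau)^q\,\sigma(L)$, meaning $\sigma(L)$ has Kummer character $\chi_L^q$ --- precisely the ``silently dropped $q$-th power'' you flag at the end of your proposal. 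Concretely, for $q=2$, $s=d=3$, $P=T^3+T+1$: taking $D$ with CRT coordinates $(g,1,1)$, where $g$ generates $\mathbb{F}_8^*$, the element $\rho_{\sigma(D)/D}\in H_{q^d,P}$ multiplies $\lambda_1,\lambda_2,\lambda_3$ by $g^{-1},g^2,1$ respectively; your generator $\lambda_1\lambda_2^4\lambda_3^2$ is fixed ($g^{-1+8}=1$), whereas the generator $\lambda_1^2\lambda_2^4\lambda_3$ coming from the paper's \eqref{TameKummer} is multiplied by $g^{6}\neq 1$ and hence does not lie in $\mathbb{F}_{q^d}K_{q,P}$ at all. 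So your argument not only proves the lemma but also corrects the exponents in \eqref{TameKummer}: up to a cyclic shift and an overall multiple prime to $q^s-1$, they should be $q^{s-i+1}$ rather than $q^{i}$.
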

\begin{proof}
For the Frobenius generator $\sigma$ of the cyclic group $\mathrm{Gal}(\mathbb{F}_{q^{{s}}}/\mathbb{F}_q)$ and $L$ as defined in \eqref{L}, the element $\sigma(L)$ generates the Kummer extension  and by the theory of Kummer extensions it  has the form
\[
\sigma(L)=L^{\mu} a^{q^{{s}}-1}, 
\]
for some $\mu\in \mathbb{N}$ such that $(\mu,q^{{s}}-1)=1$ 
and some $a\in \mathbb{F}_{q^d}(T)$. This implies that 
\[
\lambda_1^{b_s}\prod_{i=2}^{{{s}}} \lambda_i^{b_{i-1}}
=\prod_{i=1}^{{s}} \lambda_i^{b_i \mu} a^{q^{{s}}-1},
\]
which in turn implies for each $i = 2,\ldots,{s}$ that
\[
b_{i-1}=\mu b_i \mod q^{{s}}-1 
\]
and also that
\[
b_{{s}}=\mu b_1 \mod q^{{s}}-1.
\]
We let $\mu'$ be the inverse of $\mu$ modulo $q^d-1$. This yields
\begin{align}
	\notag b_1 \mu' & \equiv  b_2 \mod q^{{s}}-1 \\
	\notag b_2 \mu' & \equiv   b_3 \mod q^{{s}}-1 \\
  \label{lasteq1}	\cdots &  \cdots  \\
	\notag b_{{{s}}-1} \mu' &\equiv  b_{{s}} \mod q^{{s}}-1 \\
	\notag b_{{s}} \mu' & \equiv b_1 \mod q^{{s}}-1. 
\end{align} 
Therefore, $b_i = (\mu')^{i-1} b_1$ for each $i=2,\ldots,{{s}}$.
As $b_{{s}}=(\mu')^{{{s}}-1}b_1$ \eqref{lasteq1} implies that 
\[
(\mu')^{{s}} \equiv 1 \mod q^{{s}}-1.
\]
We can thus select $\mu'=q$ to obtain the appropriate value of
 $\mu' \mod (q^{{s}}-1)$. It may also be assumed without loss that $b_1=1$. \end{proof}

\noindent We have thus proven the following result.
\begin{proposition}
The model of the function field $\mathbb{F}_{q^d} K_{q,P}$ is given by the Kummer extension:
\begin{equation} \label{TameKummer}
L^{q^{{s}}-1}=
(-1)^s
\prod_{i=1}^{{s}} 
\left( 
T-\zeta_1^{q^{i-1}} 
\right)^{q^i}.	
\end{equation} 
\end{proposition}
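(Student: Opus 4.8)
The plan is to obtain \eqref{TameKummer} not as an independent argument but as the explicit Kummer model of the cyclic subextension $\mathbb{F}_{q^d}K_{q,P}/\mathbb{F}_{q^d}(T)$, assembling the data already prepared in this section. First I would record that this extension is cyclic Kummer of the correct degree. By the identification noted before Lemma \ref{Kummerfun}, $\mathrm{Gal}(\mathbb{F}_{q^d}K_{q,P}/\mathbb{F}_{q^d}(T))\cong G_{q,P}\cong(\mathbb{F}_q[T]/P)^*$, which is cyclic of order $n=q^s-1$ because $P$ is irreducible of degree $s$, so that $\mathbb{F}_q[T]/P\cong\mathbb{F}_{q^s}$. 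Since $s\mid d$, the constant field $\mathbb{F}_{q^d}$ contains $\mathbb{F}_{q^s}$ and hence all $(q^s-1)$st roots of unity, so Kummer theory applies: the extension is generated by a single radical $L$ with $L^{q^s-1}\in\mathbb{F}_{q^d}(T)^*$, determined only modulo $(q^s-1)$st powers.

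Next I would pin down this radical. The surjection $G_{q^d,P}\to\mathbb{Z}/n\mathbb{Z}$ of \eqref{sesH} dualizes to the injection $\mathbb{Z}/n\mathbb{Z}\hookrightarrow G_{q^d,P}^*$ whose image is exactly the character group of $\mathbb{F}_{q^d}K_{q,P}/\mathbb{F}_{q^d}(T)$; under the Kummer correspondence the distinguished generator $\iota(g)$ of \eqref{define-chara} corresponds to the element $L$ of \eqref{L}, so that $L^{q^s-1}=(-1)^s\prod_{i=1}^s(T-\rho_i)^{b_i}$. At this point Lemma \ref{KummerGen} supplies the exponents: the Frobenius recursion (built on the action of $\sigma$ on the $\lambda_i$ from Lemma \ref{Galdact}) forces $b_i=(\mu')^{\,i-1}b_1$ with $(\mu')^s\equiv 1\pmod{q^s-1}$, and the normalization $\mu'=q$, $b_1=1$ yields $b_i=q^{\,i-1}$. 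Substituting these exponents together with the root identification $\rho_i=\zeta_1^{q^{\,i-1}}$ into \eqref{L} then produces an explicit Kummer relation of the displayed shape.

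The only point that requires care -- and where I expect the sole subtlety to lie -- is that the Kummer model is not unique: replacing the chosen generator $g$ of $\mathbb{Z}/n\mathbb{Z}$ by $g^c$ with $\gcd(c,n)=1$ replaces $L$ by $L^c$ and scales every exponent $b_i$ by $c$ without altering the field $\mathbb{F}_{q^d}K_{q,P}$. Since $\gcd(q,q^s-1)=1$, taking $c=q$ (equivalently, using the radical $L^q$ attached to $\iota(g)^q=\iota(g^q)$) turns the exponents $b_i=q^{\,i-1}$ that drop out of Lemma \ref{KummerGen} into the $q^{\,i}$ recorded in \eqref{TameKummer}; the sign remains $(-1)^s$, as $(-1)^{sq}=(-1)^s$ for odd $q$ and signs are irrelevant in characteristic two. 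I would therefore phrase the final step as selecting this generator so as to land exactly on the stated relation. Apart from this normalization the argument is a direct substitution into \eqref{L}, so the proposition is genuinely a corollary: the real work has already been carried out in establishing the Frobenius action on the $\lambda_i$ and in solving the resulting recursion for the $b_i$.
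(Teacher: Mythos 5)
Your proposal follows the paper's own route exactly (dualize \eqref{sesH}, attach the radical $L$ of \eqref{L} to the generating character, solve the recursion of Lemma \ref{KummerGen}, then rescale $L\mapsto L^{c}$), and the rescaling observation you add is valid as far as it goes: replacing $L$ by $L^{c}$ with $(c,q^{s}-1)=1$ multiplies the whole exponent vector $(b_{1},\dots,b_{s})$ by $c$. But precisely because this is the \emph{only} freedom Kummer theory allows, the ratio $b_{i+1}b_{i}^{-1}\equiv\mu'\pmod{q^{s}-1}$ is an invariant of the field $\mathbb{F}_{q^{d}}K_{q,P}$, and the step you call ``the normalization $\mu'=q$'' --- which is taken verbatim from the paper's proof of Lemma \ref{KummerGen}, and is the weak point there as well --- is not a normalization: $\mu'$ must be computed, and descent computes it. Conjugation by Frobenius acts trivially on $\mathrm{Gal}(\mathbb{F}_{q^{d}}K_{q,P}/\mathbb{F}_{q^{d}}(T))$ (this is exactly the fact used to prove Lemma \ref{Kummerfun}), so for every $\tau$ in this group the Kummer pairing gives $\sigma\bigl(\langle\tau,L\rangle\bigr)=\langle\tau,L\rangle^{q}$ on one hand and $\langle\tau,\sigma(L)\rangle$ on the other; nondegeneracy then forces $\sigma(L)\in L^{q}\cdot\mathbb{F}_{q^{d}}(T)^{*}$. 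Comparing with $\sigma(L)=L^{\mu}a^{q^{s}-1}$ shows that it is $\mu$, not $\mu'$, which equals $q$; hence $\mu'\equiv q^{-1}\equiv q^{s-1}\pmod{q^{s}-1}$ and $b_{i}\equiv q^{1-i}b_{1}$, i.e.\ (after the permitted common rescaling) $b_{i}=q^{s-i}$: \emph{decreasing} powers of $q$. No common factor $c$ turns $(q^{1-i})_{i}$ into the $(q^{i})_{i}$ of \eqref{TameKummer}, since $cq^{1-i}\equiv q^{i}$ for all $i$ forces $q^{2}\equiv 1\pmod{q^{s}-1}$, i.e.\ $s\le 2$.

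The failure is concrete. Take $q=2$, $s=d=3$, $P=T^{3}+T+1$, with roots $\zeta_{1},\zeta_{1}^{2},\zeta_{1}^{4}$ and Lemma \ref{Galdact}'s labeling $\rho_{i}=\zeta_{1}^{q^{i-1}}$. By Lemma \ref{Kummerfun} and the coordinates of Lemma \ref{H-filt}, $H_{q^{d},P}$ consists of the tuples $(x_{3}^{2}x_{1}^{-1},\,x_{1}^{2}x_{2}^{-1},\,x_{2}^{2}x_{3}^{-1})$, and such an element multiplies $\lambda_{1}^{m_{1}}\lambda_{2}^{m_{2}}\lambda_{3}^{m_{3}}$ by $x_{1}^{2m_{2}-m_{1}}x_{2}^{2m_{3}-m_{2}}x_{3}^{2m_{1}-m_{3}}$, so invariance for all $x_{j}$ forces $m_{j}\equiv 2m_{j+1}\pmod 7$ (cyclically). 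The monomial $\lambda_{1}^{4}\lambda_{2}^{2}\lambda_{3}$ satisfies this, so
\[
\mathbb{F}_{8}K_{2,P}=\mathbb{F}_{8}(T)(L),\qquad
L^{7}=(T-\zeta_{1})^{4}(T-\zeta_{1}^{2})^{2}(T-\zeta_{1}^{4}),
\]
whereas the monomial $\lambda_{1}^{2}\lambda_{2}^{4}\lambda_{3}$ attached to \eqref{TameKummer} is moved by $H_{q^{d},P}$ (indeed $2\cdot 4\not\equiv 2\pmod 7$), so the equation displayed in the Proposition defines a \emph{different} degree-$7$ subextension of $K_{8,P}$, not $\mathbb{F}_{8}K_{2,P}$. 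In short: the genuine gap in your argument (shared by the paper) is the unjustified selection of $\mu'$; once $\mu$ is pinned down by descent, the method yields exponents $q^{s-i}$ in place of $q^{i}$, and the statement as displayed is correct only when $s\le 2$.
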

The Galois module structure for differentials of such extensions is known by the work of Boseck \cite{Boseck}. 
In Kummer extensions of $\mathbb{F}_{q^d}(T)$ of the form 
$y^n=\prod_{\nu=1}^d (T-a_i)^{b_i}$, $a_i\in \mathbb{F}_{q^d}$ the places $T-a_i$ are ramified with ramification index
\[
	e_i=\frac{q^{{s}}-1}{(q^{{s}}-1,b_i)}, 
\]
see \cite{Ko}.
Furthermore, by the theory of Carlitz torsion modules, we may also easily see that there is ramification over all places corresponding to the linear factors $T-\rho_i$ of $P$ in $A_d$. It follows that $(b_i,q^{{s}}-1)=1$, for each $i=1,\ldots,{{s}}$. As there is ramification of degree $q-1$ at infinity in $K_{q,P}/\mathbb{F}_q(T)$, we obtain
\[
\sum_{\nu=1}^{{s}} b_i=1+q+\cdots +q^{{{s}}-1}.
\]
We have the following tower of fields and ramified places over $\mathbb{F}_{q^d}(T)$: 
\[
\xymatrix@C=.8pc{
K_{q^d,M} \ar@{-}[d] &  
B_{1,1} \cdots B_{1,q^s-1} \ar@{-}[d]^{\frac{q^d-1}{q^s-1}} & \cdots & 
B_{s,1} \cdots B_{s,q^s-1} \ar@{-}[d]^{\frac{q^d-1}{q^s-1}}
& Q_1,\ldots Q_{t} \ar@{-}[d]^{\frac{q^d-1}{q-1}} 
\\
 \mathbb{F}_{q^d} K_{q,M} \ar@{-}[d]_{\mathbb{F}_{q^{{s}}}^*}  & B_{1}  &  \cdots & B_{{s}} &
 B_{\infty,1}, \ldots B_{\infty,\frac{q^{{s}}-1}{q-1}}  \\
\mathbb{F}_{q^d}(T) & P_1 \ar@{-}[u]^{q^{{s}}-1} & \cdots & P_{{s}} \ar@{-}[u]^{q^{{s}}-1} & P_\infty \ar@{-}[u]^{q-1} 
}
\]
{
\begin{remark}
\label{no-rem}
In the special case $s=d$, there is no ramification over $B_1,\ldots,B_s$ in the 
extension $K_{q^d,P}/\mathbb{F}_{q^d} K_{q,P}$.
\end{remark}
}

\section{Wild structure}

We now proceed to the case $M=P^\alpha$, where $P \in \mathbb{F}_q[T]$ is again an irreducible polynomial of degree $s\mid d$. By previous arguments, it is easily seen that the abelian Galois group $\mathrm{Gal}(K_{q^d,P^\alpha}/\mathbb{F}_q(T))$ may be written as the direct product
\[
\mathrm{Gal}(K_{q^d,P^\alpha}/\mathbb{F}_q(T))\cong  G_{q^d,P}  \times P_{q^d,P^\alpha},
\]
where 
\[
P_{q^d,P^\alpha}=\{D \in \mathbb{F}_{q^d}[T] \mod P^\alpha, D\equiv 1 \mod P\}.
\]
There is a very precise way to describe the group $P_{q^d,P^\alpha}$, which is also used in the elementary proof of the Kronecker-Weber theorem for rational function fields \cite{Salas-Torres}.
\begin{proposition}[Proposition 5.1 \cite{Salas-Torres}]
\label{ab-p-grp}
Let $r$ be a positive integer. The group $P_{q^r,P^\alpha}$ is an abelian $p$-group. Let $v_{q^r,n}(\alpha)$ denote the number of cyclic groups of order $p^n$ in the decomposition of $P_{q^r,P^\alpha}$, where $s=\deg P$. Then\begin{equation*}
v_{q^r,n}(\alpha)=\frac{q^{rs\big(\alpha-\mayorchico{\alpha}
{p^n}\big)}-q^{rs\big(\alpha-\mayorchico{\alpha}
{p^{n-1}}\big)}}{p^{n-1}(p-1\big)}=
\frac{q^{rs\big(\alpha-\mayorchico{\alpha}
{p^{n-1}}\big)}\big(q^{rs\big(\mayorchico{\alpha}{p^{n-1}}
-\mayorchico{\alpha}{p^{n}}\big)}-1\big)}{p^{n-1}(p-1)},
\end{equation*}
where $\lceil x\rceil$ denotes the ceiling function on $\mathbb{Q}$, i.e., the minimum integer greater than or equal to $x$.
\end{proposition}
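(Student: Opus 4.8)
The plan is to identify $P_{q^r,P^\alpha}$ with the group of principal units $1+\mathfrak{J}$ of the finite commutative ring $R:=\mathbb{F}_{q^r}[T]/(P^\alpha)$, where $\mathfrak{J}:=(P)/(P^\alpha)$ is the kernel of the reduction $R\to\mathbb{F}_{q^r}[T]/(P)$, and then to read off the structure from the Frobenius endomorphism, which in characteristic $p$ is governed by the identity $(1+x)^{p^n}=1+x^{p^n}$, valid in any commutative $\mathbb{F}_p$-algebra. Indeed $D\equiv 1\bmod P$ means exactly $D-1\in\mathfrak J$, so $P_{q^r,P^\alpha}=1+\mathfrak J\subset R^*$. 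It is abelian as a subgroup of $R^*$, and it is a $p$-group, since for $p^N\ge\alpha$ and $x\in\mathfrak J$ (nilpotent) one has $(1+x)^{p^N}=1+x^{p^N}=1$; counting gives $|1+\mathfrak J|=|\mathfrak J|=q^{rs(\alpha-1)}$, because $\mathfrak J=(P)/(P^\alpha)\cong\mathbb{F}_{q^r}[T]/(P^{\alpha-1})$ via multiplication by $P$. This settles the first assertion.

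Next I would compute, for $A:=P_{q^r,P^\alpha}$ and each $n\ge 0$, the $p^n$-torsion subgroup $A[p^n]=\{u:u^{p^n}=1\}$. Writing $u=1+x$, the identity above gives $u\in A[p^n]$ iff $x^{p^n}=0$ in $R$. Using the $P$-adic valuation on $\mathbb{F}_{q^r}[T]$, a lift of $x$ with $v_P(x)=j$ satisfies $v_P(x^{p^n})=jp^n$, so $x^{p^n}=0$ in $R$ precisely when $jp^n\ge\alpha$, that is $j\ge\lceil\alpha/p^n\rceil$. Hence $A[p^n]=1+\mathfrak J^{\lceil\alpha/p^n\rceil}$, where $\mathfrak J^{m}=(P^m)/(P^\alpha)$. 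Multiplication by $P^m$ identifies $(P^m)/(P^\alpha)$ with $\mathbb{F}_{q^r}[T]/(P^{\alpha-m})$ as $\mathbb{F}_{q^r}$-vector spaces, so $|A[p^n]|=q^{rs(\alpha-\lceil\alpha/p^n\rceil)}$ for all $n\ge 0$ (with $\lceil\alpha/p^0\rceil=\alpha$, giving $|A[1]|=1$).

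Finally I would pass from torsion counts to the enumeration of cyclic subgroups. The factor $\varphi(p^n)=p^{n-1}(p-1)$ in the denominator is the signal that $v_{q^r,n}(\alpha)$ records the cyclic subgroups of order $p^n$: the number of elements of $A$ of order exactly $p^n$ is $|A[p^n]|-|A[p^{n-1}]|$, every cyclic subgroup of order $p^n$ contributes exactly $\varphi(p^n)$ such generators, and distinct cyclic subgroups of order $p^n$ meet only in elements of smaller order, so their generator sets are disjoint and exhaust the order-$p^n$ elements. Dividing yields
\[
v_{q^r,n}(\alpha)=\frac{|A[p^n]|-|A[p^{n-1}]|}{p^{n-1}(p-1)}=\frac{q^{rs(\alpha-\lceil\alpha/p^n\rceil)}-q^{rs(\alpha-\lceil\alpha/p^{n-1}\rceil)}}{p^{n-1}(p-1)},
\]
which is the first displayed expression; the second is the algebraic rearrangement obtained by factoring $q^{rs(\alpha-\lceil\alpha/p^{n-1}\rceil)}$ out of the numerator.

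The hard part will be the exact localisation of the Frobenius kernel, namely proving $A[p^n]=1+\mathfrak J^{\lceil\alpha/p^n\rceil}$ with the ceiling function in precisely the right place; the $P$-adic valuation argument is what makes this work uniformly, even when $P$ is reducible over $\mathbb{F}_{q^r}$ (so that $R$ is a product of local rings), thereby sidestepping a case analysis via the Chinese Remainder Theorem. The remaining combinatorial step, converting $p^n$-torsion orders into cyclic-subgroup counts, is standard once $|A[p^n]|$ is in hand.
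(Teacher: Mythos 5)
The paper never proves this proposition: it is imported verbatim, with attribution, from Proposition 5.1 of \cite{Salas-Torres} and used as a black box, so there is no in-paper argument to compare against. What can be assessed is whether your self-contained derivation is correct, and it is --- for what the displayed formula actually computes. The identification $P_{q^r,P^\alpha}=1+\mathfrak{J}$ with $\mathfrak{J}=(P)/(P^\alpha)$, the $p$-group property via $(1+x)^{p^N}=1+x^{p^N}$, the torsion count $|A[p^n]|=q^{rs(\alpha-\lceil\alpha/p^n\rceil)}$, and the division of the number of elements of exact order $p^n$ by $\varphi(p^n)$ are all sound. Your treatment of reducible $P$ also works, with one point you should make explicit: when $P$ is separable but reducible over $\mathbb{F}_{q^r}$, the function $v_P(x)=\max\{j:P^j\mid x\}$ is not a valuation (it is not additive on products), but it does satisfy $v_P(x^m)=m\,v_P(x)$, because $v_P=\min_i v_{P_i}$ over the distinct irreducible factors $P_i$ and taking minima commutes with scaling by $m$; that identity is all your argument needs.

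The step that deserves a warning is the last one, where you pass from torsion counts to ``the enumeration of cyclic subgroups'' and declare that this is $v_{q^r,n}(\alpha)$. What you prove is that the displayed formula equals the number of cyclic \emph{subgroups} of order $p^n$ of $P_{q^r,P^\alpha}$. The proposition's prose, however, defines $v_{q^r,n}(\alpha)$ as the number of cyclic factors of order $p^n$ in the direct-product \emph{decomposition}, and these two invariants differ in general. Under the literal ``decomposition'' reading the proposition is in fact false: take $q=p=2$, $r=s=1$, $P=T$, $\alpha=3$. Then $P_{2,T^3}=\{1,1+T,1+T^2,1+T+T^2\}$ is cyclic of order $4$ (since $(1+T)^2=1+T^2\neq 1$), so its decomposition contains no factor $\mathbb{Z}/2\mathbb{Z}$; yet the formula gives $v_{2,1}(3)=(2^{3-2}-2^{3-3})/1=1$, which is exactly the number of cyclic subgroups of order $2$. (The decomposition multiplicities are instead second differences: writing $q=p^e$, the number of $\mathbb{Z}/p^n\mathbb{Z}$-factors is $rse\left(\lceil\alpha/p^{n-1}\rceil-2\lceil\alpha/p^n\rceil+\lceil\alpha/p^{n+1}\rceil\right)$.) So your proof establishes the formula under the only interpretation that makes it true --- you even remarked that the $\varphi(p^n)$ in the denominator ``signals'' the subgroup-count reading --- but you should state explicitly that the phrase ``in the decomposition'' is incompatible with the formula it precedes, rather than silently re-interpreting it. As written, a reader checking your argument against the literal statement would find the final identification unjustified, and no repair is possible because the literal statement is false.
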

In particular, Proposition \ref{ab-p-grp} holds for both $P_{q,P^\alpha}$ and $P_{q^d,P^\alpha}$ by setting $r = 1$ and $r = d$, respectively. For the Kummer extensions $P_{q,P}$ and $P_{q^d,P}$, we must realise the Galois group 
\[
H'_{q^d,P^\alpha}:=\mathrm{Gal}\left(K_{q^d,P^\alpha}^{G_{q^d,P}}/\mathbb{F}_{q^d}K_{q,P^\alpha}^{G_{q,P}}\right).
\]
For this, we obtain the exact sequence
\[
0 \rightarrow H'_{q^d,P^\alpha} \rightarrow 
\mathrm{Gal}(K_{q^d,P^\alpha}^{G_{q^d,P}}/\mathbb{F}_{q^d}(T)) 
\rightarrow \mathrm{Gal}(\mathbb{F}_{q^d}K_{q,P^\alpha}^{G_{q,P}} / \mathbb{F}_{q^d}(T)) \rightarrow 0,
\]
where the structure of the abelian $p$-group $$P_{q,P^\alpha}=
\mathrm{Gal}(\mathbb{F}_{q^d} K_{q,P^\alpha}^{G_{q,P}} / \mathbb{F}_{q^d}(T))$$ is given by Proposition \ref{ab-p-grp}. 
\begin{corollary} $H'_{q^d,P^{\alpha}}=(\sigma-1)P_{q^d,P^\alpha}$. \end{corollary}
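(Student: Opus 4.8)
The plan is to recognize this corollary as the wild-part analogue of Lemma~\ref{Kummerfun}(2) and to reproduce that argument with the abelian $p$-group $P_{q^d,P^\alpha}$ in place of $G_{q^d,M}$, the quotient $P_{q,P^\alpha}$ in place of $G_{q,M}$, and $H'_{q^d,P^\alpha}$ in place of $H_{q^d,M}$; the short exact sequence displayed immediately before the statement takes the role of \eqref{sesH}. Before running the formal argument, I would record three structural facts. First, by Lemma~\ref{Fi}(1) the field $K_{q^d,P^\alpha}$ is $\sigma$-stable, and since $P_{q^d,P^\alpha}$ is the unique $p$-Sylow subgroup of $G_{q^d,P^\alpha}$ (equivalently, $K_{q^d,P^\alpha}^{G_{q^d,P}}$ is the fixed field of the characteristic tame, prime-to-$p$, part $G_{q^d,P}$), conjugation by $\sigma$ gives a well-defined action of $\mathrm{Gal}(\mathbb{F}_{q^d}/\mathbb{F}_q)$ on $P_{q^d,P^\alpha}$. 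Second, $P_{q^d,P^\alpha}$ is abelian by Proposition~\ref{ab-p-grp}. Third, the quotient $P_{q,P^\alpha}=P_{q^d,P^\alpha}/H'_{q^d,P^\alpha}$ is the Galois group of the constant extension $\mathbb{F}_{q^d}K_{q,P^\alpha}^{G_{q,P}}/\mathbb{F}_{q^d}(T)$ of the geometric extension $K_{q,P^\alpha}^{G_{q,P}}/\mathbb{F}_q(T)$, so it is pointwise $\sigma$-invariant.

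With these in hand the argument is formal. For $D\in P_{q^d,P^\alpha}$, pointwise $\sigma$-invariance of the quotient forces $\sigma(D)D^{-1}$ to be trivial in $P_{q,P^\alpha}$, so $(\sigma-1)D=\sigma(D)D^{-1}\in H'_{q^d,P^\alpha}$; hence $(\sigma-1)P_{q^d,P^\alpha}\subset H'_{q^d,P^\alpha}$. Since $P_{q^d,P^\alpha}$ is abelian, the map $\Phi'\colon D\mapsto \sigma(D)D^{-1}$ is a group homomorphism whose kernel is the subgroup of $\sigma$-fixed elements. Realizing $P_{q^d,P^\alpha}$ as the units of $\mathbb{F}_{q^d}[T]/(P^\alpha)$ congruent to $1$ modulo $P$ and using $\mathbb{F}_{q^d}[T]/(P^\alpha)=\mathbb{F}_{q^d}\otimes_{\mathbb{F}_q}\mathbb{F}_q[T]/(P^\alpha)$, Galois descent identifies this fixed subgroup with $P_{q,P^\alpha}$, so $\ker\Phi'\cong P_{q,P^\alpha}$.

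The proof then closes by counting: $|\mathrm{Im}(\Phi')|=|P_{q^d,P^\alpha}|/|P_{q,P^\alpha}|$, while the exact sequence before the statement gives $|H'_{q^d,P^\alpha}|=|P_{q^d,P^\alpha}|/|P_{q,P^\alpha}|$, so $\mathrm{Im}(\Phi')=(\sigma-1)P_{q^d,P^\alpha}$ and $H'_{q^d,P^\alpha}$ have equal order; combined with the inclusion above they coincide. I expect the only genuine content to lie in the first paragraph, namely in verifying that $\sigma$ stabilizes the wild subfield $K_{q^d,P^\alpha}^{G_{q^d,P}}$ (so that conjugation gives an honest $\sigma$-action on $P_{q^d,P^\alpha}$) and that the reduction-mod-$P$ quotient is a geometric, hence $\sigma$-fixed, Galois group; these points rest on Lemma~\ref{Fi} and on $G_{q^d,P}$ being the characteristic prime-to-$p$ factor, after which the homomorphism-and-counting step is a verbatim repetition of Lemma~\ref{Kummerfun}(2).
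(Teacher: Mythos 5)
Your proposal is correct and follows the paper's own route: the paper's proof is precisely the one-line observation that the statement ``follows in the same manner as Lemma~\ref{Kummerfun}, as $\sigma(H'_{q^d,P^\alpha})=H'_{q^d,P^\alpha}$ and $(\sigma-1)D \in H'_{q^d,P^\alpha}$ for every $D\in P_{q^d,P^\alpha}$,'' and your three structural facts plus the homomorphism-and-counting step are exactly the expansion of that remark. The only difference is one of detail, not of method: you spell out the $\sigma$-stability of the wild subfield and the descent identification of the kernel, which the paper leaves implicit.
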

\begin{proof} This follows in the same manner as Lemma \ref{Kummerfun}, as $\sigma(H'_{q^d,P^\alpha})=H'_{q^d,P^\alpha}$ and $(\sigma-1)D \in H'_{q^d,P^\alpha}$ for every $D\in P_{q^d,P^\alpha}$. \end{proof}

In the extension $K_{q^d,P^\alpha}/\mathbb{F}_{q^d} K_{q,P^\alpha}$, there is generally wild ramification: According to \cite[Proposition 12.4.5]{salvador2007topics}, the place at infinity is ramified in the extension $\mathbb{F}_{q^d} K_{q,P^\alpha}$ with index $q-1$ and in the extension $K_{q^d,P^\alpha}$ with ramification index $q^d-1$. Moreover, the ramification degree of $P$ in the extension $\mathbb{F}_{q^d}K_{q,P^\alpha} /\mathbb{F}_{q^d}(T)$ is equal to {$q^{s\alpha}-q^{s(\alpha-1)}$},
while the ramification degree of $P$ in extension $K_{q^d,P^\alpha}/\mathbb{F}_{q^d}(T)$ is given by $q^{d\alpha}-q^{d(\alpha-1)}$ \cite[Proposition 12.3.14]{salvador2007topics},
whence the ramification in extension $K_{q^d,P^\alpha}/\mathbb{F}_{q^d} K_{q,P^\alpha}$ is given by
\[
{
e=\frac{q^{d\alpha}-q^{d(\alpha-1)}}{q^{s\alpha}-q^{s(\alpha-1)}}=q^{(d-s)(\alpha-1)}\frac{q^{d}-1}{q^s-1}.
}
\]
We thus obtain the following diagram, where $s=(q^{d\alpha}-q^{d(\alpha-1)})/(q-1)$:
\[
\xymatrix@C=1.3pc{
	K_{q^{d},P^\alpha} \ar@{-}[d]^H &  P' \ar@{-}[d]^e
	& \infty_{1,1}, \ldots, \infty_{1,q-1} \ar@{-}[d]^{\frac{q^d-1}{q-1}}
	& & \infty_{s,1}, \ldots, \infty_{s,q-1} \ar@{-}[d]^{\frac{q^d-1}{q-1}}& \\
    \mathbb{F}_{q^d}K_{q,P^\alpha}  \ar@{-}[d] & P_1 \ar@{-}[d] 
    & \infty_1 \ar@{-}[rd]^{q-1} & \ldots  &\infty_{s} \ar@{-}[ld]_{q-1}  \\
    \mathbb{F}_{q^d}(T)  & P  &  & \infty  & 
}
\]
The irreducible polynomial $P$ factors in $\mathbb{F}_{q^d}$ as
\begin{equation} \label{factorisation}
P(T)=\prod_{i=1}^{s}(T-\zeta_1^{q^{i-1}}),
\end{equation}
where $\zeta_1$ is a primitive $(q^s-1)$st root of 1. We denote $P_i = T-\zeta_1^{q^{i-1}}$ ($i=1,\ldots,s$).

\begin{lemma}
The quotient  $A_d/P^\alpha$ is $\mathrm{Gal}(\mathbb{F}_{q^d}/\mathbb{F}_q)$-equivariantly isomorphic to the direct sum of vector spaces:
\[
A_d/P^\alpha=  \left(A_1/P \right)^\alpha.
\]
The group of units $(A_d/P^\alpha)^*$ satisfies
\[
(A_d/P^\alpha)^*= (A_d/P)^* \oplus \left( A_d/P\right)^{\alpha-1}.
\]
\end{lemma}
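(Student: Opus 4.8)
The plan is to base everything on the $P$-adic expansion modulo $P^\alpha$, using crucially that $P\in A_1=\mathbb{F}_q[T]$ is fixed by a generator $\sigma$ of $\mathrm{Gal}(\mathbb{F}_{q^d}/\mathbb{F}_q)$, which acts on $A_d=\mathbb{F}_{q^d}[T]$ through its coefficients. For the additive assertion I would first record that every class in $A_d/P^\alpha$ has a unique representative $f_0+f_1P+\cdots+f_{\alpha-1}P^{\alpha-1}$ with each $f_i\in A_d$ of degree $<s=\deg P$, that is, with each $f_i$ a canonical representative of a class in $A_d/P$. Sending this class to $(f_0,\dots,f_{\alpha-1})$ defines an $\mathbb{F}_{q^d}$-linear isomorphism $A_d/P^\alpha\xrightarrow{\ \sim\ }(A_d/P)^\alpha$: it is a bijection by uniqueness of the expansion, and it is additive because the sum of two polynomials of degree $<s$ still has degree $<s$, so no reduction modulo $P$ is triggered. (A dimension count forces the target to be $(A_d/P)^\alpha$ rather than $(A_1/P)^\alpha$, since $\dim_{\mathbb{F}_q}(A_d/P^\alpha)=ds\alpha$ while $\dim_{\mathbb{F}_q}(A_1/P)^\alpha=s\alpha$.) Equivariance is then immediate: as $\sigma(P)=P$ and $\deg\sigma(f_i)=\deg f_i<s$, we have $\sigma(\sum f_iP^i)=\sum\sigma(f_i)P^i$, so $\sigma$ acts diagonally on the $\alpha$ summands, matching the diagonal action on $(A_d/P)^\alpha$.

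For the unit group I would proceed in two steps. First, the short exact sequence $1\to P_{q^d,P^\alpha}\to(A_d/P^\alpha)^*\to(A_d/P)^*\to1$ has a $p$-group kernel (Proposition \ref{ab-p-grp}) and a cokernel of order $(q^d-1)^s$ prime to $p$; hence the prime-to-$p$ part of $(A_d/P^\alpha)^*$ is a canonical, characteristic (hence $\sigma$-stable) complement mapping isomorphically onto $(A_d/P)^*$, which gives a $\sigma$-equivariant splitting $(A_d/P^\alpha)^*\cong(A_d/P)^*\times P_{q^d,P^\alpha}$. Second, restricting the $P$-adic expansion to the $1$-units, the congruence $D\equiv1\bmod P$ forces $f_0=1$, so $D=1+f_1P+\cdots+f_{\alpha-1}P^{\alpha-1}$, and $D\mapsto(f_1,\dots,f_{\alpha-1})$ is a $\sigma$-equivariant bijection of $P_{q^d,P^\alpha}$ onto $(A_d/P)^{\alpha-1}$, matching orders $q^{ds(\alpha-1)}$ on both sides.

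The point I would be most careful about is the precise meaning of the second direct sum. The bijection $P_{q^d,P^\alpha}\leftrightarrow(A_d/P)^{\alpha-1}$ is $\sigma$-equivariant and respects the filtration by the subgroups $U_i=1+P^iA_d$, but it is \emph{not} a group isomorphism in general: in characteristic $p$ one has $(1+P^if)^p=1+P^{ip}f^p$, so the $1$-units cease to be elementary abelian once $\alpha>p$, their genuine cyclic decomposition being the one tabulated in Proposition \ref{ab-p-grp}. Accordingly I would phrase the second statement as an isomorphism of filtered $\mathrm{Gal}(\mathbb{F}_{q^d}/\mathbb{F}_q)$-sets, equivalently an isomorphism on the associated graded where each $U_i/U_{i+1}\cong A_d/P$ additively. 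This is exactly the structure needed for the later computation of the $(\sigma-1)$-images, while the full multiplicative group law is supplied separately by Proposition \ref{ab-p-grp}.
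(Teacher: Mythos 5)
Your proof is correct, and on the additive half it coincides in substance with the paper's argument: the paper proves exactness of \eqref{reduce-power}, deduces by induction that $A_d/P^\alpha \cong A_d/P \oplus P/P^2 \oplus \cdots \oplus P^{\alpha-1}/P^{\alpha}$, and identifies each graded piece $P^i/P^{i+1}$ with $A_d/P$ via $f \mapsto fP^i$, equivariantly because $P\in A_1$ is $\sigma$-invariant; your unique expansion $f_0+f_1P+\cdots+f_{\alpha-1}P^{\alpha-1}$ with $\deg f_i<s$ is the same decomposition written with explicit representatives (your ``no carrying'' remark is exactly why the two formulations agree), and your dimension count correctly flags $(A_1/P)^\alpha$ in the statement as a typo for $(A_d/P)^\alpha$, which is what the paper's own proof produces. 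On the multiplicative half you genuinely diverge, to your advantage. The paper's proof environment never addresses the unit group at all: that claim is treated only in the discussion following the proof, where invertibility of $f$ is reduced to invertibility of $f\bmod P$ and the $1$-units are studied through the filtration $N_t$ of \eqref{Nt-def}, whose graded quotients are $P^t/P^{t+1}\cong A_d/P$ --- precisely the filtered/associated-graded reading you propose. Your Schur--Zassenhaus-style first step (the prime-to-$p$ part of the abelian group $(A_d/P^\alpha)^*$ is a characteristic, hence $\sigma$-stable, complement of $P_{q^d,P^\alpha}$ mapping isomorphically onto $(A_d/P)^*$) supplies a clean tame/wild splitting that the paper merely asserts ``by previous arguments'' at the start of Section 4. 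Most importantly, your caveat is right: for $\alpha>p$ the $1$-units are not elementary abelian, since $(1+P^if)^p=1+P^{ip}f^p$, so the second display of the lemma is false if read as an isomorphism of groups; the genuine invariant-factor decomposition is the one in Proposition~\ref{ab-p-grp}, and what the lemma can (and should) assert is exactly the $\sigma$-equivariant filtered statement you formulate, which is all that the later computation of $(\sigma-1)$-images in Lemma~\ref{H-filt} actually uses.
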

\begin{proof}
It is clear that the following sequence is exact:
\begin{equation} \label{reduce-power}
0
\rightarrow 
P^{\alpha-1}/P^\alpha
\rightarrow 
A_1/P^\alpha
\rightarrow 
A_1/P^{\alpha-1}
\rightarrow 
1.
\end{equation}
Therefore, we can prove by induction that 
\[
(A_d/P^\alpha)= A_d/P \oplus P/P^2 \oplus P^2/P^3 \oplus \cdots \oplus 
P^{\alpha-1}/P^\alpha.
\]
On the other hand, the map 
\begin{align*}
A_d/P & \rightarrow  P^i/P^{i+1} \\
 f \mod P & \mapsto  f\cdot P^i \mod P^{i+1} 
\end{align*}
is a $\mathrm{Gal}(\mathbb{F}_{q^d}/\mathbb{F}_q)$-equivariant isomorphism, since $P^i$ is a $\mathrm{Gal}(\mathbb{F}_{q^d}/\mathbb{F}_q)$-invariant element. 
\end{proof}
Let $f \in A_d/P^\alpha$. Consider the class $f_0$ of $f$ modulo $P^{\alpha-1}$, i.e., 
\[
f \mod P^\alpha \equiv f_0 + P^{\alpha-1} f_1 \mod P^\alpha,  \qquad f \mod P^{\alpha-1} \equiv f_0 \mod P^{\alpha-1}.
\]
We consider now the multiplication
\begin{align*}
f\cdot g &\equiv  (f_0 + P^{\alpha-1} f_1 ) (g_0+ P^{\alpha-1}g_1) \\
& \equiv  f_0 g_0 + ( \bar{f}_0 g_1+ f_1 \bar{g}_0 )P^{\alpha-1} \mod P^{\alpha},
\end{align*}
where $\bar{f}_0,\bar{g}_0$ denote the classes of $f_0,g_0 \mod P$. It is clear by induction that $f$ is invertible if, and only if, $\bar{f}_0\equiv f \mod P$ is invertible. We define the following filtration:
\begin{equation}
\label{Nt-def}
N_t:=\{ D \mod P^\alpha : D \equiv 1 \mod P^t\}.
\end{equation}
The group $N_1$ is the wild part of $(A_d/P^\alpha)^*$. We have the following short exact sequence 
\[
1 \rightarrow N_{t+1} \rightarrow N_{t} \rightarrow P^t/P^{t+1} \rightarrow 1,
\]
where the group structure on $N_t$ is multiplicative while the structure on $P^t/P^{t+1}$ is additive. The wild part of  $H_{q^d,P^\alpha}$ consists of elements of the form $\sigma(D)D^{-1}$ in $N_1$, where $D\in N_1$. The filtration of $N_1$
\[
N_1 \supset N_2 \supset \cdots N_i \supset \cdots
\]
induces a filtration on $H_{q^d,P^\alpha}$:
\[
H^1=H_{q^d,P^\alpha}\cap N_1 \supset H^2=H_{q^d,P^\alpha} \cap N_2 \supset \cdots  \supset H^i=H_{q^d,P^\alpha} \cap N_i \supset \cdots   
\]
We observe that 
\[
\frac{H^i}{H^{i+1}}=\{ \sigma(D)D^{-1} \mod H^{i+1}: D \in H^i\},
\] 
and the latter group can be identified with the image of the operator $(\sigma-1)(f)$ in the additive group $P^i/P^{i+1}\cong A_d/P$.  

\begin{lemma} \label{H-filt}
The space $A_d/P \cong \oplus_{i=1}^s \mathbb{F}_{q^d}$. An element $f\in A_d/P$ is mapped to the coordinates $(f(\zeta),f(\zeta^q),\ldots,f(\zeta^{q^{s-1}})) \in \oplus_{i=1}^s \mathbb{F}_{q^d}$. The action of the operator $\sigma-1$ on the coordinates $(x_1,\ldots,x_s) \in \oplus_{i=1}^s \mathbb{F}_{q^d}$ is given by:
\[
\xymatrix{
(x_1,\ldots,x_s)  \ar[r]^-{\sigma-1} & (x_s^{q}-x_1, x_1^q-x_2, \ldots, 
x_{s-1}^q-x_s).
}
\]
\end{lemma}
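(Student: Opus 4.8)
The plan is to establish the coordinate isomorphism via the Chinese Remainder Theorem, then compute the Frobenius action on those coordinates directly, and finally subtract to read off the action of $\sigma - 1$.

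First I would use the splitting $P = \prod_{i=1}^s (T - \zeta^{q^{i-1}})$ from \eqref{factorisation}. These $s$ linear factors are pairwise distinct, since $\zeta$ is a primitive $(q^s-1)$st root of unity and hence the exponents $q^{i-1}$, $i = 1, \ldots, s$, are pairwise incongruent modulo $q^s - 1$; therefore they are pairwise coprime and the Chinese Remainder Theorem yields
\[
A_d/P = \mathbb{F}_{q^d}[T]/(P) \cong \bigoplus_{i=1}^s \mathbb{F}_{q^d}[T]/(T - \zeta^{q^{i-1}}) \cong \bigoplus_{i=1}^s \mathbb{F}_{q^d},
\]
where each factor is identified with $\mathbb{F}_{q^d}$ by evaluation $f \mapsto f(\zeta^{q^{i-1}})$. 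This is precisely the stated map $f \mapsto (f(\zeta), f(\zeta^q), \ldots, f(\zeta^{q^{s-1}}))$.

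Next I would pin down the $\sigma$-action. Writing $\rho_i := \zeta^{q^{i-1}}$, the roots of $P$ form a single Frobenius orbit with $\rho_i^q = \rho_{i+1}$ for $i < s$ and, crucially, $\rho_s^q = \zeta^{q^s} = \zeta = \rho_1$, the wrap-around coming from $\zeta^{q^s} = \zeta$. The generator $\sigma \in \mathrm{Gal}(\mathbb{F}_{q^d}/\mathbb{F}_q)$ is the Frobenius $x \mapsto x^q$, acting on $\mathbb{F}_{q^d}[T]$ coefficientwise and fixing $T$; since $P \in \mathbb{F}_q[T]$ is $\sigma$-invariant, this descends to $A_d/P$. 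The key computation is that for $f = \sum_j a_j T^j$,
\[
\sigma(f)(\rho_i) = \sum_j a_j^q \rho_i^j = \Big( \sum_j a_j \rho_{i-1}^j \Big)^q = f(\rho_{i-1})^q,
\]
using $\rho_{i-1}^q = \rho_i$ (cyclically, with $\rho_0 := \rho_s$) and that $x \mapsto x^q$ is a field homomorphism. Hence if $f$ has coordinates $(x_1, \ldots, x_s)$ with $x_i = f(\rho_i)$, then $\sigma(f)$ has coordinates $(x_s^q, x_1^q, \ldots, x_{s-1}^q)$.

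Finally, subtracting coordinatewise gives $(\sigma - 1)(f) = \sigma(f) - f$ with coordinates $(x_s^q - x_1, x_1^q - x_2, \ldots, x_{s-1}^q - x_s)$, which is exactly the claimed formula. I expect the only genuine obstacle to be bookkeeping: correctly identifying that Frobenius cyclically permutes the coordinate slots through the orbit structure $\rho_i \mapsto \rho_i^q = \rho_{i+1}$ with the wrap-around $\rho_s \mapsto \rho_1$, and keeping straight that raising coefficients to the $q$th power and then evaluating at $\rho_i$ together produce the $q$th power of the evaluation at the \emph{preceding} root $\rho_{i-1}$. Everything else reduces to the standard CRT decomposition and elementary finite-field arithmetic.
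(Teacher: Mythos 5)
Your proof is correct. The paper in fact states Lemma \ref{H-filt} with no proof at all (the lemma is immediately followed by the kernel and image counts), and your argument --- CRT applied to the splitting \eqref{factorisation}, the coefficientwise Frobenius action descending to $A_d/P$ because $P \in \mathbb{F}_q[T]$, and the computation $\sigma(f)(\rho_i) = f(\rho_{i-1})^q$ with the wrap-around $\rho_s^q = \rho_1$ --- is precisely the intended argument implicit in the surrounding discussion. You also flagged the only delicate point correctly: Frobenius shifts evaluations \emph{forward} through the orbit, so the $i$th coordinate of $\sigma(f)$ is $x_{i-1}^q$, which is exactly what yields the stated cyclic formula $(x_s^q - x_1,\, x_1^q - x_2,\, \ldots,\, x_{s-1}^q - x_s)$.
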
 The kernel of $\sigma-1$ consists of elements $(x_1,\ldots,x_s)$ such that 
\[
x_1\in \mathbb{F}_{q^d}: x_i=x_1^{q^{i-1}}, \; i=2,\ldots, s.
\]
Clearly this set has $q^d$ elements since all elements are determined by the value of $x_1$. Therefore, the image of $\sigma-1$ has exactly $q^{d(s-1)}$ elements. 

\begin{remark}
Given an $H_{q^d,P^\alpha}$-module $\mathcal{M}$, the space of invariants $\mathcal{M}^H$ is given by 
\[
\mathcal{M}^{H_{q^d,P^\alpha}}= \left( \left( \mathcal{M}^{H_{q^d,P^\alpha}^\alpha} \right)^{H_{q^d,P^\alpha}^{\alpha-1}/H_{q^d,P^\alpha}^\alpha } \cdots \right)^{H_{q^d,P^\alpha}^1/H_{q^d,P^\alpha}^2}.
\]
Since $H^\alpha=H^\alpha/H^{\alpha+1}$, we can apply recursively the computation of Lemma \ref{H-filt} in order to compute $H_{q^d,P^\alpha}$-invariants.
\end{remark}

We now turn to ramification groups and the computation of the different. For each $i \in \mathbb{Z}$ with $i \geq -1$, the
 $i$th ramification group of $\mathfrak{P}|\mathfrak{p}$ is defined as $$
G_{i}(\mathfrak{P}|\mathfrak{p}) = \{\sigma \in G_{q^d,P^\alpha}\;| \; v_\mathfrak{P}(\sigma(x) - x) \geq i+1\text{ for all } x \in \mathfrak{O}_\mathfrak{P}\},$$ where $\mathfrak{O}_\mathfrak{P}$ denotes the valuation ring at $\mathfrak{P}$. We denote $G_i = G_i(\mathfrak{P}|\mathfrak{p})$. 

\begin{proposition} \label{filtration} \begin{enumerate} \item The groups $N_k$ defined in \eqref{Nt-def} have order $q^{ds(\alpha-k)}$ for $1\leq k \leq n$ and correspond to the upper ramification filtration at $\mathfrak{P}|\mathfrak{p}$. \item We have
\begin{itemize}
	\item $G_0= G_{q^d,P^\alpha}$,
	\item $G_i= N_k$ for all $q^{d(k-1)} \leq i \leq q^{dk} - 1$ and $1 \leq k \leq \alpha-1$, and  
	\item $G_i = N_\alpha = \mathrm{Id}$ for all $i \geq q^{d(\alpha-1)}$.
\end{itemize}\end{enumerate} \end{proposition}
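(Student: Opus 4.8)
The plan is to reduce everything to a purely local computation at $\mathfrak{P}\mid\mathfrak{p}$ and to evaluate the ramification numbers $i(\sigma):=v_{\mathfrak P}(\sigma(\pi)-\pi)$ for a uniformiser $\pi$ adapted to the Carlitz action. First I would complete at the prime $\mathfrak p=(T-\rho_1)$ of $\mathbb{F}_{q^d}(T)$ lying over $P$; by the ramification-index computations preceding the statement the inertia at $\mathfrak P\mid\mathfrak p$ is identified with $(A_d/P^\alpha)^\ast=G_{q^d,P^\alpha}$, so $G_0$ is as claimed. The structural input that makes the computation tractable is \emph{monogenicity}: one has $\mathfrak{O}_{\mathfrak P}=\mathfrak{o}_{\mathfrak p}[\lambda^{(\alpha)}]$, where $\lambda^{(j)}$ denotes a generator of the torsion module $C_{q^d}[(T-\rho_1)^j]$ and $\lambda^{(\alpha)}$ is a uniformiser at $\mathfrak P$. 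Once this is established, $\sigma\in G_i$ if and only if $i(\sigma)=v_{\mathfrak P}(\sigma(\lambda^{(\alpha)})-\lambda^{(\alpha)})\ge i+1$, and the problem reduces to computing $i(\sigma_D)$ for each $D$. The order count in (1) is then immediate: writing $D\equiv 1\bmod P^t$ as $D=1+P^tE$ with $E$ ranging over $A_d/P^{\alpha-t}$ gives $|N_t|=|A_d/P^{\alpha-t}|=q^{ds(\alpha-t)}$, and $N_\alpha=\mathrm{Id}$.

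The heart of the argument is the valuation computation, which I would carry out as follows. Fix $D\in N_k\setminus N_{k+1}$ and write $D=1+P^kE$ with $(E,P)=1$. Since the Carlitz action is additive and $\mathbb{F}_{q^d}$-linear,
\[
\sigma_D(\lambda^{(\alpha)})-\lambda^{(\alpha)}=D*_d\lambda^{(\alpha)}-\lambda^{(\alpha)}=P^k*_d\bigl(E*_d\lambda^{(\alpha)}\bigr),
\]
and $E*_d\lambda^{(\alpha)}$ is again a generator of $C_{q^d}[(T-\rho_1)^\alpha]$ because $E$ is a unit modulo $P$. Applying $P^k*_d$ lowers the $(T-\rho_1)$-power torsion level by $k$ — the remaining linear factors $T-\rho_i$ ($i\neq1$) act invertibly on $C_{q^d}[(T-\rho_1)^\alpha]$ — so the right-hand side is a generator of $C_{q^d}[(T-\rho_1)^{\alpha-k}]$, of valuation $v_{\mathfrak P}(\lambda^{(\alpha-k)})$. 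It then remains to compare the valuations of successive torsion generators. From the functional equation $(T-\rho_1)*_d u=(T-\rho_1)u+u^{q^d}$ together with $v_{\mathfrak P}(\lambda^{(\alpha)})=1$, a Newton-polygon comparison shows $v_{\mathfrak P}(u^{q^d})=q^dv_{\mathfrak P}(u)<v_{\mathfrak P}((T-\rho_1)u)$ whenever $v_{\mathfrak P}(u)$ is small, forcing the recursion $v_{\mathfrak P}(\lambda^{(j-1)})=q^d\,v_{\mathfrak P}(\lambda^{(j)})$ and hence $v_{\mathfrak P}(\lambda^{(\alpha-k)})=q^{dk}$. (As a check, $k=\alpha-1$ returns $v_{\mathfrak P}(\lambda^{(1)})=q^{d(\alpha-1)}$, compatibly with $(\lambda^{(1)})^{q^d-1}=-(T-\rho_1)$ and the total ramification index.) Thus $i(\sigma_D)=q^{dk}$ for every $D\in N_k\setminus N_{k+1}$.

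With the ramification numbers in hand, part (2) is bookkeeping. Since $\sigma_D\in G_i$ exactly when $q^{dk}\ge i+1$, for $q^{d(k-1)}\le i\le q^{dk}-1$ the automorphisms lying in $G_i$ are precisely those with $D\in N_k$, giving $G_i=N_k$; the elements of $N_{k-1}\setminus N_k$ satisfy $i(\sigma)=q^{d(k-1)}\le i$ and are excluded, so the jump is sharp. For $i\ge q^{d(\alpha-1)}$ even $N_{\alpha-1}\setminus N_\alpha$ is excluded, so $G_i=N_\alpha=\mathrm{Id}$, while $G_0=G_{q^d,P^\alpha}$ as noted. For the upper-filtration assertion in (1) I would apply the Herbrand transform to the lower breaks at $q^{dk}-1$: computing $\psi$ explicitly shows the upper breaks fall at the integers $0,1,\dots,\alpha-1$, so that $N_k=G^{u}$ for $k-1<u\le k$, exhibiting the $N_k$ as the terms of the upper ramification filtration (in accordance with Hasse--Arf, the extension being abelian). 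I expect the main obstacle to be precisely the valuation step: one must justify monogenicity of $\mathfrak O_{\mathfrak P}$ carefully, so that the minimum of $v_{\mathfrak P}(\sigma(x)-x)$ over $x\in\mathfrak O_{\mathfrak P}$ is attained at $\lambda^{(\alpha)}$, and one must control the iterated Carlitz functional equation so as to obtain $v_{\mathfrak P}(\lambda^{(j)})=q^{d(\alpha-j)}$ as an equality rather than merely a lower bound.
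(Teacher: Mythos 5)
Your proposal is correct in substance, but it takes a genuinely different route from the paper: the paper's entire proof is a citation --- part (1) is quoted from Keller's thesis (Prop.~2.2) and part (2) is then deduced from it via the standard correspondence between upper and lower numbering in Serre's \emph{Local Fields} --- whereas you reconstruct the underlying local computation from scratch, and in the opposite direction. You compute the \emph{lower} filtration first, via the ramification numbers $i(\sigma_D)=v_{\mathfrak{P}}\bigl(D*_d\lambda^{(\alpha)}-\lambda^{(\alpha)}\bigr)$: additivity of the Carlitz action reduces this to the valuation of a generator of $C_{q^d}[(T-\rho_1)^{\alpha-k}]$, and the recursion $v_{\mathfrak{P}}(\lambda^{(j-1)})=q^d\,v_{\mathfrak{P}}(\lambda^{(j)})$ gives $i(\sigma_D)=q^{dk}$ for $D\in N_k\setminus N_{k+1}$; the upper filtration in part (1) then comes out of the Herbrand transform. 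This is essentially the content of the result the paper cites (and the exact analogue of Serre's computation for $\mathbb{Q}_p(\zeta_{p^n})/\mathbb{Q}_p$), so what your approach buys is a self-contained proof; what the paper's buys is brevity. The steps you flag as delicate are the right ones, and both are standard: $\lambda^{(\alpha)}$ is a uniformiser because its minimal polynomial (the Carlitz division polynomial) is Eisenstein at $T-\rho_1$, and monogenicity $\mathcal{O}_L=\mathcal{O}_K[\pi_L]$ is automatic for a totally ramified extension of complete local fields, so that $i(\sigma)$ may indeed be evaluated on $\lambda^{(\alpha)}$ alone. Your bookkeeping for part (2) and the verification that the upper breaks land at the integers $1,\dots,\alpha-1$ (consistent with Hasse--Arf) are both correct.

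One caveat, which you inherit from the statement rather than introduce: your opening identification of the inertia group at $\mathfrak{P}\mid(T-\rho_1)$ with all of $G_{q^d,P^\alpha}=(A_d/P^\alpha A_d)^*$ holds only when $s=\deg P=1$. For $s>1$ the subfields $K_{q^d,(T-\rho_i)^\alpha}$ with $i\neq 1$ are unramified above $T-\rho_1$, so the inertia group is the CRT factor $(A_d/(T-\rho_1)^\alpha)^*$, of order $q^{d(\alpha-1)}(q^d-1)$, and the groups $N_k$ (defined by congruences modulo $P^k$, hence of order $q^{ds(\alpha-k)}$) are then strictly too large to be ramification groups of $\mathfrak{P}\mid\mathfrak{p}$. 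Your valuation computation is exactly right once the congruences are taken modulo $(T-\rho_1)^k$ instead of modulo $P^k$; stated that way, your argument proves the proposition in the form in which it is literally correct, which is also how the cited result of Keller should be read.
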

\begin{proof} \begin{enumerate} \item \cite[Prop. 2.2]{Keller}. \item This follows by the relation between upper and lower ramification filtrations \cite[IV. sec. 3]{SeL}. \end{enumerate} \end{proof}

By Proposition \ref{filtration}, the lower ramification filtration is given by 
\begin{align*}
G_0 & >   G_1=\cdots=G_{q^{d}-1}=N_1   \\
    & >  G_{q^{d}}=\cdots=G_{q^{2d}-1}=N_2  \\
    & >  G_{q^{2ds}}=\cdots = G_{q^{3d}-1}=N_3 \\
    & >  \qquad \qquad \cdots  \\
    & >  G_{q^{d(\alpha-2)}}
=\cdots=G_{q^{d(\alpha-1)}}=N_{\alpha-1} \\
& >  \{1\}.
\end{align*}
The ramification filtration for the group $H_{q^d,P^\alpha}$ may be found by intersecting $H_{q^d,P^\alpha}$ with $G_i$, so that
\[
H_i:=G_i \cap H_{q^d,P^\alpha},\qquad |H_i|=
\begin{cases}
(q^{d}-1)^{s-1} q^{d(s-1)(\alpha-1)} & \text{ if } i=0 \\
q^{d(s-1)(\alpha-i)} & \text{ if } i \geq 1.
\end{cases}
\]
We now determine the different $\mathfrak{D}_{K_{q^d,P^\alpha}/\mathbb{F}_{q^d} K_{q,P^\alpha}}$ of $K_{q^d,P^\alpha}/\mathbb{F}_{q^d} K_{q,P^\alpha}$.
\begin{proposition} 
\label{prop:47}
$$\mathfrak{D}_{K_{q^d,P^\alpha}/\mathbb{F}_{q^d} K_{q,P^\alpha}} = \prod_{i=1}^s  \prod_{\mathfrak{P}|\wp_i} \mathfrak{P}^A  \prod_{\mathfrak{B}|\mathfrak{p}_\infty} \mathfrak{B}^B,$$ 
where $s=\deg P$, $s\mid d$, 
\begin{align}
\label{Aeq}
 A &=({\alpha q^{d\alpha}- (\alpha+1)q^{d(\alpha-1)}}) - ( {\alpha q^{s\alpha} - (\alpha+1)q^{s(\alpha-1)}})\frac{q^{d\alpha } - q^{d(\alpha -1)}}{q^{\alpha s}-q^{(\alpha -1)s}} \\\notag & = \frac{q^{d (\alpha -1)} \left(q^d-q^s\right)}{q^s-1},  \end{align}

and 
\begin{equation}
\label{Beq}
B = (q^d - 2) - (q-2)  \frac{q^d-1}{q-1} = q\left(\frac{q^{d-1} - 1}{q-1}\right).
\end{equation}
\end{proposition}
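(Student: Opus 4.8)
The plan is to obtain both exponents from the transitivity of the different applied to the tower $K_{q^d,P^\alpha} \supset \mathbb{F}_{q^d}K_{q,P^\alpha} \supset \mathbb{F}_{q^d}(T)$. If $\mathfrak{P}$ is a place of $K_{q^d,P^\alpha}$ lying over a place $\mathfrak{p}_1$ of $\mathbb{F}_{q^d}K_{q,P^\alpha}$ lying in turn over a place $\mathfrak{p}$ of $\mathbb{F}_{q^d}(T)$, then
\[
d(\mathfrak{P}\mid\mathfrak{p}) = d(\mathfrak{P}\mid\mathfrak{p}_1) + e(\mathfrak{P}\mid\mathfrak{p}_1)\, d(\mathfrak{p}_1\mid\mathfrak{p}),
\]
whence $d(\mathfrak{P}\mid\mathfrak{p}_1) = d(\mathfrak{P}\mid\mathfrak{p}) - e(\mathfrak{P}\mid\mathfrak{p}_1)\, d(\mathfrak{p}_1\mid\mathfrak{p})$. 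The left-hand side is precisely the exponent $A$ (at a finite place) or $B$ (at infinity) sought in the statement, and the relative ramification indices $e(\mathfrak{P}\mid\mathfrak{p}_1)$ are the ones already recorded above: $\frac{q^{d\alpha}-q^{d(\alpha-1)}}{q^{s\alpha}-q^{s(\alpha-1)}}$ at the finite places $\wp_i$ and $\frac{q^d-1}{q-1}$ at $\mathfrak{p}_\infty$. Thus the whole computation reduces to determining the differents of $K_{q^d,P^\alpha}$ and of $\mathbb{F}_{q^d}K_{q,P^\alpha}$ over the rational base $\mathbb{F}_{q^d}(T)$.

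First I would handle the finite places. Since $P = \prod_{i=1}^s (T-\rho_i)$ splits over $\mathbb{F}_{q^d}$, only the $i$-th Carlitz factor $(T-\rho_i)^\alpha$ ramifies at $\wp_i=(T-\rho_i)$; the local inertia group is $(A_d/(T-\rho_i)^\alpha)^*$ and its lower ramification filtration is that of Proposition \ref{filtration} with $s$ taken to be $1$, so that $|G_0| = q^{d(\alpha-1)}(q^d-1)$ and $G_i = N_k$ has order $q^{d(\alpha-k)}$ for $q^{d(k-1)} \le i \le q^{dk}-1$. Hilbert's formula $d = \sum_{i\ge 0}(|G_i|-1)$ then telescopes to
\[
d(\mathfrak{P}\mid\wp_i) = \alpha q^{d\alpha} - (\alpha+1)q^{d(\alpha-1)}.
\]
The same evaluation carried out over $\mathbb{F}_q$, where $P$ is irreducible of degree $s$ with residue field $\mathbb{F}_{q^s}$, gives the different of $K_{q,P^\alpha}/\mathbb{F}_q(T)$ at $P$ as $\alpha q^{s\alpha}-(\alpha+1)q^{s(\alpha-1)}$; because $\mathbb{F}_{q^d}K_{q,P^\alpha}$ is the constant field extension of $K_{q,P^\alpha}$, and constant extensions preserve ramification indices and differents at corresponding places, this number is also $d(\mathfrak{p}_1\mid\wp_i)$. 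Substituting these two differents and the relative index into the transitivity formula gives the first displayed expression for $A$, and factoring out $q^{d(\alpha-1)}$ and clearing the denominator $q^s-1$ produces the closed form $\frac{q^{d(\alpha-1)}(q^d-q^s)}{q^s-1}$.

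For the place at infinity, both $K_{q^d,P^\alpha}/\mathbb{F}_{q^d}(T)$ and $\mathbb{F}_{q^d}K_{q,P^\alpha}/\mathbb{F}_{q^d}(T)$ are tamely ramified at $\mathfrak{p}_\infty$, with ramification indices $q^d-1$ and $q-1$ respectively (the index at infinity in a Carlitz cyclotomic extension equals the size of the constant field minus one). For tame ramification the different exponent is $e-1$, so $d(\mathfrak{B}\mid\mathfrak{p}_\infty) = q^d-2$ and $d(\mathfrak{p}_1\mid\mathfrak{p}_\infty) = q-2$; substituting these and the relative index $\frac{q^d-1}{q-1}$ into the transitivity formula gives $B = (q^d-2) - (q-2)\frac{q^d-1}{q-1}$, which simplifies to $q\big(\frac{q^{d-1}-1}{q-1}\big)$.

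The only genuinely computational step is the telescoping of Hilbert's formula for the cyclotomic different over the rational base, and the one point demanding care is the localization at the split finite places: although $K_{q^d,P^\alpha}$ is a compositum of $s$ Carlitz extensions, a single factor governs the inertia at each $\wp_i$, so Proposition \ref{filtration} applies there with $s=1$. The remainder is transitivity of the different together with the invariance of the different under constant field extension.
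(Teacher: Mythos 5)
Your proposal is correct and takes essentially the same route as the paper: both arguments rest on transitivity (functoriality) of the different applied to the tower $\mathbb{F}_{q^d}(T)\subset\mathbb{F}_{q^d}K_{q,P^\alpha}\subset K_{q^d,P^\alpha}$, on the vanishing of the different under the everywhere-unramified constant field extensions, and on the cyclotomic different exponents over the rational base, followed by the same exponent arithmetic yielding \eqref{Aeq} and \eqref{Beq}. The only difference is cosmetic: you re-derive the input exponents ($\alpha q^{d\alpha}-(\alpha+1)q^{d(\alpha-1)}$ at the finite places via Hilbert's formula and the filtration of Proposition \ref{filtration}, and $e-1$ at the tamely ramified infinite place), whereas the paper quotes them directly from Villa-Salvador (Propositions 12.7.1 and 12.7.2) and organizes the bookkeeping with conorms in two towers over $\mathbb{F}_q(T)$.
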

 \begin{proof}
As the extension $K_{q^d,P}/\mathbb{F}_q(T)$ is separable, we may employ \cite[Theorem 5.7.15]{salvador2007topics}: Among separable extensions $K \subset L \subset M$ of global fields, we have the functorial identity \begin{equation} \label{differentfunct}\mathfrak{D}_{M/K} = \mathfrak{D}_{M/L} \text{con}_{L/M}(\mathfrak{D}_{L/K}), 
\end{equation} 
where $\mathrm{con}_{L/M}$ denotes the conorm map of the corresponding fields $L,M$, see \cite[5.3]{salvador2007topics}.
We consider the two towers $$\mathbb{F}_q(T) \subset K_{q,P^\alpha } \subset \mathbb{F}_{q^d} K_{q,P^\alpha } \subset K_{q^d,P^\alpha } \quad \text{ and }\quad \mathbb{F}_q(T) \subset \mathbb{F}_{q^d}(T) \subset K_{q^d,P^\alpha }.$$ We may now proceed with computations within each of these towers.
\begin{enumerate}

\item $\mathbb{F}_q(T) \subset K_{q,P^\alpha }$. The different $\mathfrak{D}_{K_{q,P^\alpha }/\mathbb{F}_q(T)}$ is given by \cite[prop. 12.7.1]{salvador2007topics}
$$\mathfrak{D}_{K_{q,P^\alpha }/\mathbb{F}_q(T)} = \mathfrak{P}^{\alpha q^{s\alpha } - (\alpha +1)q^{s(\alpha -1)}} \prod_{\mathfrak{B}|\mathfrak{p}_\infty} \mathfrak{B}^{q-2}, $$ where $\mathfrak{P}$ is the unique place of $K_{q,P^\alpha }$ above the place $\mathfrak{p}$ of $\mathbb{F}_q(T)$ associated with $P$.
\item $ \mathbb{F}_{q^d} K_{q,P^\alpha }/ K_{q,P^\alpha }$ is unramified at all places of $K_{q,P^\alpha }$, whence $$\mathfrak{D}_{\mathbb{F}_{q^d} K_{q,P^\alpha } /K_{q,P^\alpha }}  = (1).$$
\item $\mathbb{F}_{q^d}(T)/\mathbb{F}_q(T)$ is also unramified at all places of $\mathbb{F}_q(T)$, whence $$\mathfrak{D}_{\mathbb{F}_{q^d}(T)/\mathbb{F}_q(T)}  = (1).$$ 
\item For the extension $K_{q^d,P^\alpha }/\mathbb{F}_{q^d}(T)$: As $s | d$ and the polynomial $P$ thus splits completely in $\mathbb{F}_{q^d}[T]$, we denote each linear factor of $P$ in $\mathbb{F}_{q^d}[T]$ by $\wp_i$, so that $P = \prod_{i=1}^s \wp_i$. We obtain \cite[Thm. 12.7.2]{salvador2007topics}
$$\mathfrak{D}_{K_{q^d,P^\alpha }/\mathbb{F}_{q^d}(T)} = \left[\prod_{i=1}^s  \prod_{\mathfrak{P}|\wp_i} \mathfrak{P} \right]^{\alpha q^{d\alpha }- (\alpha +1)q^{d(\alpha -1)}}  \prod_{\mathfrak{B}|\mathfrak{p}_\infty} \mathfrak{B}^{q^d-2}.$$
\end{enumerate}

By \eqref{differentfunct}, we may therefore write $\mathfrak{D}_{K_{q^d,P^\alpha }/\mathbb{F}_{q^d} K_{q,P^\alpha }}$ as
\begin{align*} \notag 
\mathfrak{D}^{-1}&_{K_{q^d,P^\alpha }/\mathbb{F}_{q^d} K_{q,P^\alpha }} 
= \mathfrak{D}_{K_{q^d,P^\alpha }/\mathbb{F}_q(T)}^{-1} \text{con}_{\mathbb{F}_{q^d}K_{q,P}/K_{q^d,P^\alpha }}(\mathfrak{D}_{\mathbb{F}_{q^d} K_{q,P^\alpha }/\mathbb{F}_q(T)}) 
\\
\notag  &= \left(\mathfrak{D}_{K_{q^d,P^\alpha }/\mathbb{F}_{q^d}(T)} \text{con}_{\mathbb{F}_{q^d}(T)/K_{q^d,P^\alpha }}(\mathfrak{D}_{\mathbb{F}_{q^d}(T)/\mathbb{F}_q(T)})\right)^{-1} 
\\
 \notag & \qquad \qquad \qquad \times 
\text{con}_{\mathbb{F}_{q^d} K_{q,P^\alpha }/K_{q^d,P^\alpha }}(\mathfrak{D}_{\mathbb{F}_{q^d} K_{q,P^\alpha }/\mathbb{F}_q(T)}) 
\\\notag &= 
\mathfrak{D}^{-1}_{K_{q^d,P^\alpha }/\mathbb{F}_{q^d}(T)}\text{con}_{\mathbb{F}_{q^d} K_{q,P^\alpha }/K_{q^d,P^\alpha }}(\mathfrak{D}_{\mathbb{F}_{q^d} K_{q,P^\alpha }/\mathbb{F}_q(T)})
\\\notag &= \mathfrak{D}^{-1}_{K_{q^d,P^\alpha }/\mathbb{F}_{q^d}(T)}\text{con}_{\mathbb{F}_{q^d} K_{q,P^\alpha }/K_{q^d,P^\alpha }}( \mathfrak{D}_{\mathbb{F}_{q^d} K_{q,P^\alpha }/K_{q,P^\alpha }} 
\\
 \notag & \qquad \qquad \qquad \qquad \times
\text{con}_{K_{q,P^\alpha }/\mathbb{F}_{q^d} K_{q,P^\alpha }}(\mathfrak{D}_{K_{q,P^\alpha }/\mathbb{F}_q(T)})) 
\\ &= \mathfrak{D}^{-1}_{K_{q^d,P^\alpha }/\mathbb{F}_{q^d}(T)}\text{con}_{\mathbb{F}_{q^d} K_{q,P^\alpha }/K_{q^d,P^\alpha }}( \text{con}_{K_{q,P^\alpha }/\mathbb{F}_{q^d} K_{q,P^\alpha }}(\mathfrak{D}_{K_{q,P^\alpha }/\mathbb{F}_q(T)})) \\\notag &= \mathfrak{D}^{-1}_{K_{q^d,P^\alpha }/\mathbb{F}_{q^d}(T)} \text{con}_{K_{q,P^\alpha }/K_{q^d,P^\alpha }}(\mathfrak{D}_{K_{q,P^\alpha }/\mathbb{F}_q(T)}) \\\notag & = \left[\prod_{i=1}^s  \prod_{\mathfrak{P}|\wp_i} \mathfrak{P} \right]^{-({\alpha q^{d\alpha }- (\alpha +1)q^{d(\alpha -1)}})}  \prod_{\mathfrak{B}|\mathfrak{p}_\infty} \mathfrak{B}^{-(q^d-2)} \\\notag & \qquad \qquad\qquad\qquad\qquad  \times \text{con}_{K_{q,P^\alpha }/K_{q^d,P^\alpha }}\left(\mathfrak{P}^{{\alpha q^{s\alpha } - (\alpha +1)q^{s(\alpha -1)}}} \prod_{\mathfrak{B}|\mathfrak{p}_\infty} \mathfrak{B}^{q-2}\right) \\\notag &= \left[\prod_{i=1}^s  \prod_{\mathfrak{P}|\wp_i} \mathfrak{P} \right]^{-({\alpha q^{d\alpha }- (\alpha +1)q^{d(\alpha -1)}})}  \prod_{\mathfrak{B}|\mathfrak{p}_\infty} \mathfrak{B}^{-(q^d-2)}  \\\notag & \qquad\qquad\qquad  \times \left[\prod_{i=1}^s  \prod_{\mathfrak{P}|\wp_i} \mathfrak{P}^{{\alpha q^{s\alpha } - (\alpha +1)q^{s(\alpha -1)}}}\right]^{\frac{q^{d\alpha } - q^{d(\alpha -1)}}{q^{\alpha s}-q^{(\alpha -1)s}}} \left[\prod_{\mathfrak{B}|\mathfrak{p}_\infty} \mathfrak{B}^{q-2}\right]^{\frac{q^d-1}{q-1}}.\end{align*} 
The result follows.  	
 \end{proof}

 \section{Tower structures}

\label{WTCM}
We now examine cyclotomic function fields as composites of towers of Kummer and Artin-Schreier extensions. Let $P \in \mathbb{F}_q[T]$ be irreducible, whence $P$ possesses only simple roots. We assume once more that $\deg{P}=s \mid d$. We now give the recursive definition for the cyclotomic function field $K_{q^d,(T-\rho)^\alpha}$. 
 
 \begin{lemma} Let $(T - \rho) \in A_d$ be a factor of $P$, where $\rho\in \mathbb{F}_{q^s}\cong A_1/P$. Then the field $K_{q^d,(T - \rho)^\alpha}$ may be described recursively by a tower of composita of explicitly determined Kummer and Artin-Schreier extensions over $\mathbb{F}_{q^d}(T)$. \end{lemma}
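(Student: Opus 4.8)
The plan is to realise $K_{q^d,(T-\rho)^\alpha}$ as the top of an explicit tower
\[
\mathbb{F}_{q^d}(T)=K_0\subset K_1\subset K_2\subset\cdots\subset K_\alpha=K_{q^d,(T-\rho)^\alpha},\qquad K_i:=K_{q^d,(T-\rho)^i},
\]
and to exhibit each step as an explicit Kummer or Artin--Schreier extension. First I would pin down the Galois groups of the layers. By \eqref{isomorphism}, $\mathrm{Gal}(K_\alpha/\mathbb{F}_{q^d}(T))\cong(A_d/(T-\rho)^\alpha)^*$, and the subgroup fixing $K_i$ is precisely the analogue $N_i:=\{D\bmod(T-\rho)^\alpha:D\equiv1\bmod(T-\rho)^i\}$ of the filtration \eqref{Nt-def} for the linear factor $T-\rho$; hence $\mathrm{Gal}(K_{i+1}/K_i)\cong N_i/N_{i+1}$. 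For $i=0$ this is the cyclic tame group $(A_d/(T-\rho))^*\cong\mathbb{F}_{q^d}^*$ of order $q^d-1$, while for $i\ge1$ the map $1+c(T-\rho)^i\mapsto c$ identifies $N_i/N_{i+1}\cong(T-\rho)^i/(T-\rho)^{i+1}\cong\mathbb{F}_{q^d}$ additively, an elementary abelian $p$-group of exponent $p$ and order $q^d$.

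The base layer $K_1/K_0$ is exactly the tame Kummer extension recorded in \eqref{lambdaKummer}: choosing a generator $\lambda_1$ of $C_{q^d}[T-\rho]$, one has $\lambda_1^{q^d-1}=-(T-\rho)$, so $K_1=\mathbb{F}_{q^d}(T)(\lambda_1)$ is Kummer of degree $q^d-1$, prime to $p$. For the wild layers I would exploit the Carlitz recursion. Taking compatible generators with $\lambda_i=(T-\rho)*_d\lambda_{i+1}$, the Carlitz action \eqref{Carlitzaction} by $T-\rho$ gives
\[
\lambda_{i+1}^{q^d}+(T-\rho)\lambda_{i+1}=\lambda_i.
\]
The operator $x\mapsto x^{q^d}+(T-\rho)x$ is additive and $\mathbb{F}_{q^d}$-linear with kernel $C_{q^d}[T-\rho]=\mathbb{F}_{q^d}\lambda_1\subset K_1\subset K_i$, so $K_{i+1}=K_i(\lambda_{i+1})$ is Galois over $K_i$ with group the additive $\mathbb{F}_{q^d}$, matching $N_i/N_{i+1}$ above. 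To put this in standard form I would substitute $z:=\lambda_{i+1}/\lambda_1$ and use $\lambda_1^{q^d}=-(T-\rho)\lambda_1$ to compute
\[
\lambda_{i+1}^{q^d}+(T-\rho)\lambda_{i+1}=-(T-\rho)\lambda_1\bigl(z^{q^d}-z\bigr),
\]
so that $z$ satisfies the genuine Artin--Schreier equation
\[
z^{q^d}-z=\frac{-\lambda_i}{(T-\rho)\lambda_1}\in K_i.
\]
Since $z^{q^d}-z=\wp(z)$ is the standard degree-$q^d$ Artin--Schreier operator over a field containing $\mathbb{F}_{q^d}$, this realises $K_{i+1}/K_i$ as a compositum of $[\mathbb{F}_{q^d}:\mathbb{F}_p]$ independent degree-$p$ Artin--Schreier extensions, given explicitly and recursively through the previous Carlitz generator $\lambda_i$.

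Finally I would assemble the tower: $K_\alpha$ is the compositum of the tame Kummer extension $K_1$ with the chain of Artin--Schreier extensions defined recursively by $\wp(z_{i+1})=-\lambda_i/((T-\rho)\lambda_1)$ for $i=1,\ldots,\alpha-1$. A degree count confirms completeness, since the tower has degree $(q^d-1)\prod_{i=1}^{\alpha-1}q^d=(q^d-1)q^{d(\alpha-1)}=|(A_d/(T-\rho)^\alpha)^*|$. The main obstacle I anticipate is not any single computation but verifying the genuine degree and separability of each wild layer: one must confirm $\lambda_{i+1}\notin K_i$, equivalently that $-\lambda_i/((T-\rho)\lambda_1)$ is not of the form $w^{q^d}-w$ for $w\in K_i$, and that the Kummer base and the successive Artin--Schreier layers stay linearly disjoint so the degrees multiply. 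The identification $\mathrm{Gal}(K_{i+1}/K_i)\cong N_i/N_{i+1}$ guarantees the layer degrees a priori, but I would make the non-triviality of each Artin--Schreier class explicit by reading off the negative valuation of $-\lambda_i/((T-\rho)\lambda_1)$ at the (wildly ramified) place above $T-\rho$.
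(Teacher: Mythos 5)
Your proposal is correct and is essentially the paper's own argument: the paper likewise takes the Kummer generator $\lambda$ with $\lambda^{q^d-1}=-(T-\rho)$, uses the Carlitz recursion $u_j^{q^d}+(T-\rho)u_j=u_{j-1}$, and performs exactly your substitution $U_j=u_j/\lambda$, arriving at the same Artin--Schreier equations $U_j^{q^d}-U_j=-U_{j-1}/(T-\rho)$; indeed your right-hand side $-\lambda_i/\bigl((T-\rho)\lambda_1\bigr)$ is just $-U_i/(T-\rho)$. Your extra bookkeeping (the filtration $N_i$, the layer-by-layer Galois groups, and the degree count) only makes explicit what the paper absorbs into standard cyclotomic function field theory.
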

\begin{proof}  We first consider the case $\alpha = 2$, i.e., the cyclotomic function field $K_{q^d,(T-\rho)^2}$ generated over $\mathbb{F}_{q^d}(T)$ by the torsion points of $(T-\rho)^2$. By definition of the Carlitz action $*_d$, we have 
\begin{align*}
(T-\rho)*_d u &=  u^{q^d}+(T-\rho) u \\
(T-\rho)^2*_du & =  u^{q^{2d}}+ 
\left( 
(T-\rho)^{q^d}+(T-\rho)
\right)	u^{q^d}
+
(T-\rho)^2 u.
\end{align*}
We denote $X=(T-\rho)*_du$. The equation 
\[
(T-\rho)*_dX=X^{q^d}+(T-\rho)X=0
\]
implies that either $X=0$ or $X^{q^d-1}=-(T-\rho)$. Let $\lambda$ be a solution of the second equation, so that $\lambda^{q^d-1}=-(T-\rho)$. The general torsion point for $(T-\rho)^2$ over $\mathbb{F}_{q^d}$ may then be described by the equation 
\[
u^{q^d}+ (T-\rho)u =\lambda.
\] 
We set $u=U\lambda$, which yields
\[
U^{q^d}\lambda^{q^d}-\lambda^{q^d-1} \lambda U=\lambda,
\]
whence we obtain
\begin{equation} \label{U-def}
U^{q^d}-U=-\frac{1}{T-\rho}.
\end{equation}
We have thus constructed the following tower of fields:
\[
\xymatrix{
 & K_{q^d,(T-\rho)^2} \ar@{-}[dr] \ar@{-}[dl] & \\
 \frac{\mathbb{F}_{q^d}(T)[U]}{\langle U^{q^d}-U=-\frac{1}{T-\rho} \rangle} \ar@{-}[dr] &  &
 \frac{\mathbb{F}_{q^d}(T)[\lambda]}{\langle\lambda^{q^d-1}=-(T-\rho)\rangle} \ar@{-}[dl] \\
 & \mathbb{F}_{q^d}(T) &
}
\]
The field $K_{q^d,(T-\rho)^2}$ is therefore the compositum of a Kummer extension and an Artin-Schreier extension, where a root $u$ of the torsion point equation $(T-\rho)^2*_du$ is given by $u=U\lambda$. One may now easily proceed inductively: Let $U_2:=U$ be the element given in \eqref{U-def}, and let $u_2=U_2\lambda$. A solution of $(T-\rho)^3*_du_3=0$ is then given by 
\[
u_3^{q^d}+(T-\rho)u_3=u_2.
\]
We set $U_3=u_3/\lambda$. This yields $U_3^{q^d}\lambda^{q^d}- \lambda^{q^d-1}\lambda U_3=u_2$, which in turn implies that
\[
U_3^{q^d}-U_3=-{ \frac{U_2}{T-\rho} }. 
\]
(We note that $u_2/\lambda^{q^d-1}=U_2$.) In this way, we may build a tower of successive Artin-Schreier extensions. We have thus obtained the extended diagram 
\begin{equation} \label{compositediagram}
\xymatrix{
  & K_{q^d,(T-\rho)^\alpha} \ar@{-}[dr] \ar@{-}[ddddl] \ar@{.}[dd]& 
  \\
  &   &
  \frac{\mathbb{F}_{q^d}(T)[U_\alpha]}{\langle U_\alpha^{q^d}-U_\alpha={-\frac{U_{\alpha-1}}{T - \rho}} \rangle} \ar@{.}[dd]\\
  & 
K_{q^d,(T-\rho)^3} \ar@{-}[dr] \ar@{-}[ddl] \ar@{-}[d]
   &  \\
  &  
K_{q^d,(T-\rho)^2} \ar@{-}[dr] \ar@{-}[dl] \ar@{-}[d]
  & 
  \frac{\mathbb{F}_{q^d}(T)[U_3]}{\langle U_3^{q^d}-U_3={-\frac{U_2}{T-\rho}} \rangle} \ar@{-}[d]
  \\
 \mathbb{F}_{q^{d}}(T)(\lambda) \ar@{-}[dr]_{\text{Kummer}}  & 
K_{q^d,(T-\rho)} \ar@{=}[l] 
  & 
 \frac{\mathbb{F}_{q^d}(T)[U_2]}{\langle U_2^{q^d}-U_2={-\frac{1}{T-\rho}}\rangle} \ar@{-}[dl]\\   
	 & \mathbb{F}_{q^d}(T) & 
}
\end{equation}
This completes the proof. \end{proof}

Within the diagram \eqref{compositediagram}, we have described the Kummer model of the extension $\mathbb{F}_{q^d}K_{q,P^\alpha}/\mathbb{F}_{q^d}(T)$. By the arguments of \S 2 on Kummer covers, it thus remains to describe the Artin-Schreier-Witt model of the extension $$K_{q^d,P^\alpha}^{\mathbb{F}_{q^d}^*}/\mathbb{F}_{q^d}(T).$$ We may therefore prove the following corollary.
\begin{corollary} Let $P \in \mathbb{F}_q[T]$ be irreducible of degree $s\mid d$. Then the field $K_{q^d,P^\alpha}^{\mathbb{F}_{q^d}^*}$ may be described recursively by a tower of composita of explicitly determined Artin-Schreier extensions over $\mathbb{F}_{q^d}(T)$. \end{corollary}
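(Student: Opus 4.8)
The plan is to extract the wild part of the construction already carried out in the preceding Lemma and to identify it with the fixed field $K_{q^d,P^\alpha}^{\mathbb{F}_{q^d}^*}$. First I would reduce to a single linear factor: since $P=\prod_{i=1}^s(T-\rho_i)$ splits completely in $A_d$ (as $s\mid d$) and the Carlitz torsion decomposes by the prime decomposition \eqref{Chinese} extended to prime powers, the field $K_{q^d,P^\alpha}$ is the compositum of the fields $K_{q^d,(T-\rho_i)^\alpha}$, and $G_{q^d,P^\alpha}\cong\prod_{i=1}^s(A_d/(T-\rho_i)^\alpha)^*$, so these factors are linearly disjoint over $\mathbb{F}_{q^d}(T)$. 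It therefore suffices to treat each $(T-\rho_i)^\alpha$ separately and then pass to the compositum.

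For a single factor, the preceding Lemma produced, via the diagram \eqref{compositediagram}, the Kummer generator $\lambda_i$ with $\lambda_i^{q^d-1}=-(T-\rho_i)$ together with the Artin--Schreier tower $U_{i,2},\ldots,U_{i,\alpha}$ defined recursively by \eqref{U-def} and its successors, where $U_{i,j}=u_{i,j}/\lambda_i$ and $u_{i,j}$ is the chosen $(T-\rho_i)^j$-torsion point. The key step is to show that each $U_{i,j}$ is fixed by the tame subgroup $\mathbb{F}_{q^d}^*\subset(A_d/(T-\rho_i)^\alpha)^*$. For a scalar $c\in\mathbb{F}_{q^d}^*$, the associated Galois element $\sigma_c$ acts through the Carlitz action $*_d$ by $\sigma_c(\lambda_i)=c\,\lambda_i$ and $\sigma_c(u_{i,j})=c\,u_{i,j}$, since $c$ is a constant and hence $c(\rho_i)=c$ in \eqref{*daction}; consequently $\sigma_c(U_{i,j})=\sigma_c(u_{i,j})/\sigma_c(\lambda_i)=U_{i,j}$. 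Thus $\mathbb{F}_{q^d}(T)(U_{i,2},\ldots,U_{i,\alpha})\subseteq K_{q^d,(T-\rho_i)^\alpha}^{\mathbb{F}_{q^d}^*}$.

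Equality then follows by a degree count. The full extension has degree $|(A_d/(T-\rho_i)^\alpha)^*|=(q^d-1)q^{d(\alpha-1)}$, with tame part $\mathbb{F}_{q^d}^*$ of order $q^d-1$, so the fixed field has degree $q^{d(\alpha-1)}$ over $\mathbb{F}_{q^d}(T)$; on the other hand each successive relation $U_{i,j}^{q^d}-U_{i,j}=-U_{i,j-1}/(T-\rho_i)$ is an Artin--Schreier equation of degree $q^d$ over $\mathbb{F}_{q^d}$, so the tower $\mathbb{F}_{q^d}(T)(U_{i,2},\ldots,U_{i,\alpha})$ also has degree $q^{d(\alpha-1)}$, forcing equality. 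Taking the compositum over $i=1,\ldots,s$ and fixing the full tame part $(\mathbb{F}_{q^d}^*)^s\cong(A_d/P)^*$ then exhibits $K_{q^d,P^\alpha}^{\mathbb{F}_{q^d}^*}$ as the compositum of the $s$ Artin--Schreier towers, each defined recursively over $\mathbb{F}_{q^d}(T)$, which is the desired description.

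The hard part will be the bookkeeping in the compositum: one must verify that the tame subgroup being fixed is exactly the product of scalar actions on the separate factors, that the $s$ Artin--Schreier towers remain linearly disjoint over $\mathbb{F}_{q^d}(T)$ so that the degrees multiply, and that the Frobenius $\sigma$ --- which by Lemma \ref{Galdact} permutes the $\lambda_i$ and hence the towers according to $\rho_i=\rho_1^{q^{i-1}}$ --- preserves this Artin--Schreier structure without introducing any residual Kummer contribution. Once these compatibilities are checked, the recursive Artin--Schreier-Witt description is immediate from the relations already recorded in \eqref{U-def}.
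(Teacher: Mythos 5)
Your proposal is correct and takes essentially the same route as the paper: reduction via the Chinese remainder theorem to the linear factors $T-\rho_i$, followed by identifying $K_{q^d,P^\alpha}^{\mathbb{F}_{q^d}^*}$ (the fixed field of the full tame part $(\mathbb{F}_{q^d}^*)^s\cong (A_d/P)^*$, exactly as you interpret it) with the compositum of the Artin--Schreier towers $U_j^{(i)}$ constructed in the preceding lemma; the paper then goes further, normalising $\sigma(U_j^{(i)})=U_j^{(i+1)}$ and using partial fractions, but that serves the descent to $\mathbb{F}_q$ rather than the stated claim, and conversely your tame-invariance check via the Carlitz action of constants is a verification the paper leaves implicit. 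One small refinement: your degree count only bounds the tower's degree from above, since an equation of degree $q^d$ need not be irreducible; the matching lower bound follows because adjoining $\lambda_i$ to your tower recovers all of $K_{q^d,(T-\rho_i)^\alpha}$, whose degree $(q^d-1)q^{d(\alpha-1)}$ then forces each Artin--Schreier step to have full degree $q^d$.
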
 
\begin{proof} For each linear factor $T-\rho_i$ of $P$, we consider the generating elements
$U_j^{(i)}$, $j = 1,\ldots,\alpha$, of the fields $K_{q^d,(T-\rho_i)^\alpha}$, each of which satisfies the equation
\[
\left( U_j^{(i)} \right)^p- U_j^{(i)} = U_{j-1}^{(i)}.
\] 
By the Chinese remainder theorem once more, we obtain an equality with the compositum
\[
K_{q^d,P^\alpha}^{\mathbb{F}_{q^d}^*} = \prod_{i=1}^{{s}} K_{q^d,(T-\rho_i)^\alpha}^{\mathbb{F}_{q^d}^*}.
\]
As before, let $\sigma$ be a generator of the cyclic group $\mathrm{Gal}(\mathbb{F}_{q^d}/\mathbb{F}_{q})$ such that $\sigma (\rho_i)=\rho_{i+1}$, for each $i=1,\ldots,{{s}}-1$, and $\sigma(\rho_{{s}})=\rho_1$. For each $i=1,\ldots,{{s}}-1$, we have $$\sigma(U_2^{(i)})=U_2^{(i+1)}-a_i, \qquad \qquad a_i\in \mathbb{F}_{q^d}.$$ We may normalise without loss the selection of elements $U_2^{(i)}$ so that 
$$\sigma(U_2^{(i)})=U_2^{(i+1)}.$$ Proceeding inductively, we thus obtain
\[
\sigma(U_{j}^{(i)})=U_{j}^{(i+1)} \qquad\qquad  j = 2,\ldots,\alpha.
\]
By the correspondence of Artin-Schreier extensions to additive characters, we know that the field $\mathbb{F}_{q^d}K_{q,P^2}^{\mathbb{F}_{q}^*}$ is given by an Artin-Schreier equation
\[
y^{q^d}-y=\sum_{i=1}^{{s}} \frac{b_i}{U_1^{(i)}}, \qquad \qquad U_1^{(i)}=T-\rho_i.
\]
On the other hand, the field $K_{q,P^{2}}$ is invariant under the action of the generator $\sigma$. It follows that there exists an element $c_i \in \mathbb{F}_{q^d}(T)$ such that 
\[
\sigma\left(\sum_{i=1}^{{s}} \frac{b_i}{U_1^{(i)}}\right)=
\sum_{i=1}^{{s}} \frac{b_i}{U_1^{(i+1)}}+ c_i^p-c_i.
\]
By uniqueness of partial fraction expansions, this yields that $b_1=b_2=\cdots=b_{{s}}$. In order to consider the higher powers $j = 3,\ldots,\alpha$, one may then proceed inductively, or alternatively, via a standard Witt vector construction \cite{Hazewinkel}. \end{proof}

\section{Galois module structure}

The space of holomorphic differentials $H^0(X,\Omega_X)$ for the curve $X$ corresponding to the cyclotomic function field $K_{q^d,M}$  if $\mathbb{F}_{q^d}$  is selected big enough so that $M$ splits in $\mathbb{F}_{q^d}$ are known, see \cite{Ward}. Our strategy in computing holomorphic differentials $H^0(Y,\Omega_Y)$ for the curve $Y$ corresponding to $K_{q,M}$, where $M$ does not split in $\mathbb{F}_q$ is to consider the Galois cover $X\rightarrow Y$ with Galois group $H=H_{q,M}$ and reduce holomorphic differentials of $X$ to holomorphic differentials of $Y$. We will prove that we have the inclusions
\begin{equation}
\label{inclusions}
H^0(Y,\Omega_Y) \subset L_Y(\Omega(D)) =H^0(X,\Omega_X)^H,
\end{equation}
where $L_Y(\Omega(D))$ is a space of non-holomorphic differentials allowing poles on a certain explicitly given divisor $D$. Our proposed strategy is to  approach the structure of $L_Y(\Omega(D))$ using the inclusions of eq. (\ref{inclusions}. We have to compute the  $H$-invariant differentials of $H^0(X,\Omega_X)$ and then select the holomorphic ones among them. 

Let us start in a more general setting. 
Consider a Galois ramified cover  $\pi:X\rightarrow Y$  of projective complete nonsingular curves defined over the field $k=\mathbb{F}_{q^d}$ with Galois group $H$, and let $df$ be a differential on $Y$. Let $\pi^*$ denote the pullback and $k(X),k(Y)$ be the functions fields of the curves $X,Y$. 
We will follow a multiplicative notation for the divisors. A divisor $D=\prod_{i=1}^t P_i^{a_i}$, 
where $P_i$ are prime divisors and
 $a_i\in \mathbb{N}$ will be called integral.
The divisor $\mathrm{div}_X(\pi^*(df))$ of $\pi^*(df)$ in $X$ is given by 
\[
\mathrm{div}_X(\pi^*(df))=
\pi^* \mathrm{div}_Y(df)\cdot R_{X/Y},
\] 
where $R_{X/Y}$ is the ramification divisor, see \cite[IV.2]{Hartshorne:77}. This allows us to compute holomorphic differentials on $X$ via functions $g\in k(X)$ such that $ \mathrm{div}_X(g)\cdot\mathrm{div}_X(\pi^*df)$ is holomorphic \cite[sec. 3]{Boseck}. $H$-invariant differentials on $X$ correspond to meromorphic differentials $\Omega_Y(\mathcal{D})$ of $Y$ with a well-prescribed set of poles and pole orders, where $\mathcal{D}$ is a divisor which can be explicitly prescribed. 

We let $\mathfrak{I}_X$ (resp. $\mathfrak{I}_Y$) denote the collection of integral divisors of $X$ (resp. $Y$).
Let $df$ be a  differential of $Y$, and let 
\[
R_{X/Y}=\prod_{i=1}^s  \prod_{Q_{\nu,i} \mapsto P_i}  Q_{\nu,i}^{\delta_{i}}
\]
be the ramification divisor of $X/Y$, 
where $\delta_i \in \mathbb{Z}$ is the exponent of the different. 
\begin{lemma}
The space of $H$-invariant differentials in $X$ is isomorphic to the vector space 
\begin{align}
\label{H-inv}
L_Y\left( 
\Omega_Y
\left(\prod_{i=1}^s  (P_i)_Y^{\lf \frac{\delta_i}{e_i}\rf } \right)\right)
&= \left\{ g\in k(Y): 
\mathrm{div}_Y(g) \cdot \mathrm{div}_Y({df}) \prod_{i=1}^s  (P_i)_Y^{\lf \frac{\delta_i}{e_i}\rf } \in \mathfrak{I}_Y
\right\}
\\
& = L_Y(\mathcal{D}) \label{invariants1}
\end{align}
\end{lemma}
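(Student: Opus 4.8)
The plan is to translate the geometric condition ``$H$-invariant holomorphic differential on $X$'' into a condition on functions on $Y$ via the pullback $\pi^*$, and to track precisely how the ramification divisor $R_{X/Y}$ contributes to pole orders downstairs. First I would fix the base differential $df$ of $Y$ and recall from the relation $\mathrm{div}_X(\pi^*(df)) = \pi^*\mathrm{div}_Y(df)\cdot R_{X/Y}$ that every $H$-invariant differential $\omega$ on $X$ can be written as $\omega = \pi^*(g\, df)$ for a unique $g\in k(Y)$, since $H$ acts trivially on the pulled-back differentials and $\pi^*$ identifies $k(Y)$ with $k(X)^H$. This gives the isomorphism $H^0(X,\Omega_X)^H \cong \{g\in k(Y): \pi^*(g\,df) \text{ is holomorphic on } X\}$ at the level of vector spaces, so the whole content is to rewrite the holomorphy condition for $\pi^*(g\,df)$ as a divisor condition on $Y$.

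Next I would compute $\mathrm{div}_X(\pi^*(g\,df)) = \pi^*\mathrm{div}_Y(g) \cdot \pi^*\mathrm{div}_Y(df) \cdot R_{X/Y}$ and demand that this divisor be integral (effective), i.e.\ lie in $\mathfrak{I}_X$. The key point is to handle this valuation-by-valuation at each place $Q_{\nu,i}$ of $X$ lying over $P_i$ of $Y$, using that $v_{Q_{\nu,i}}(\pi^* D) = e_i\, v_{P_i}(D)$ for a divisor $D$ of $Y$, where $e_i$ is the ramification index. At such a place the holomorphy condition reads
\[
e_i\bigl(v_{P_i}(g) + v_{P_i}(df)\bigr) + \delta_i \geq 0,
\]
which, since the left side is an integer and $e_i>0$, is equivalent to
\[
v_{P_i}(g) + v_{P_i}(df) \geq -\left\lf \frac{\delta_i}{e_i}\right\rf = \left\lceil -\frac{\delta_i}{e_i}\right\rceil.
\]
At unramified places ($e=1$, $\delta=0$) this is just $v_P(g)+v_P(df)\geq 0$, contributing nothing to $\mathcal{D}$. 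Collecting these inequalities over all places is exactly the statement that $\mathrm{div}_Y(g)\cdot\mathrm{div}_Y(df)\cdot \prod_{i=1}^s (P_i)_Y^{\lf \delta_i/e_i\rf}$ is integral, which is the defining condition of $L_Y(\mathcal{D})$ with $\mathcal{D}=\Omega_Y\bigl(\prod_i (P_i)_Y^{\lf \delta_i/e_i\rf}\bigr)$.

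The main obstacle I anticipate is the floor/ceiling bookkeeping: one must verify carefully that the integrality of the upstairs divisor, which is an inequality after multiplying through by $e_i$, descends to the correct floor $\lf \delta_i/e_i\rf$ and not an off-by-one variant. The subtlety is that $e_i v_{P_i}(g\,df) + \delta_i \geq 0$ is equivalent to $v_{P_i}(g\,df) \geq -\delta_i/e_i$, and because $v_{P_i}(g\,df)$ is an integer this is in turn equivalent to $v_{P_i}(g\,df) \geq \lceil -\delta_i/e_i\rceil = -\lf \delta_i/e_i\rf$, so the allowed pole order of $g$ at $P_i$ relative to $df$ is exactly $\lf \delta_i/e_i\rf$. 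One should also note that the ramification is tame or wild according to the cases described earlier, but this affects only the numerical value of $\delta_i$ and not the structure of the argument. Finally I would remark that $g\mapsto \pi^*(g\,df)$ is manifestly $k$-linear and injective, so it is a vector-space isomorphism onto $H^0(X,\Omega_X)^H$, completing the identification in \eqref{H-inv}--\eqref{invariants1}.
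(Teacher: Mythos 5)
Your proposal is correct and takes essentially the same route as the paper: identify the $H$-invariant differentials with $\pi^*(g\,df)$ for $g\in k(Y)$ (the paper writes this as $h\,df$ with $h,f\in k(Y)$), compute the divisor via $\mathrm{div}_X(\pi^*(df))=\pi^*\mathrm{div}_Y(df)\cdot R_{X/Y}$, and convert integrality into the pole bound $\lfloor \delta_i/e_i\rfloor$. The only difference is mechanical — the paper descends the divisor to $Y$ by pushing forward (using that $\pi_*\pi^*$ raises to the $|H|$-th power), whereas you compare valuations place by place upstairs via $v_{Q_{\nu,i}}(\pi^*D)=e_i v_{P_i}(D)$; your version has the merit of making the floor/ceiling bookkeeping explicit, which the paper leaves implicit in its final ``therefore.''
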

\begin{proof}
 An $H$-invariant  differential on $X$ is given by 
$h df, $ where $h,f \in k(Y)$. We observe that $\omega=h df$ is holomorphic if, and only if, the divisor
 \[\mathrm{div}_X(\omega)=\mathrm{div}_X(h)\mathrm{div}_X(\pi^*(df)) \in \mathfrak{I}_X.
 \] 
 By   taking the pushforward and using the fact that applying $\pi_* \pi^*$ is equivalent to raising to the  $|H|$ power
 we compute that 
\[
\pi_* (\mathrm{div}_X(\omega ))=
\mathrm{div}_Y(h)^{|H|}\cdot \mathrm{div}_Y(df)^{|H|}
 \cdot \mathrm{div}_Y(\pi_*(R_{X/Y})),
\]
and $\omega$ is holomorphic if and only if $\pi_*(\omega) \in \mathfrak{I}_Y$.
An easy computation yields that 
\[
\mathrm{div}_Y(\pi_*(R_{X/Y}))=\prod_{i=1}^s  (P_i)_Y^{\delta_i \frac{|H|}{e_i}},
\]
where $\delta_i$ is the differential exponent at $P_i$, and therefore, the set of $H$-invariant differentials can be identified with the space of functions given in eq. (\ref{H-inv}).
\end{proof}
Assume now that  $X$ and $Y$ are the curves corresponding to the function fields 
	$K_{q^d,P^\alpha}$ and $\mathbb{F}_{q^d} K_{q,P^\alpha}$, respectively. 
We will take $f=T$ and we will compute the ingredients of lemma \ref{H-inv}.

We will compute the divisor in $Y$ of $dT$. 
We know that 
\begin{equation}
\label{divisorT}
\mathrm{div}_Y(dT)= 
\prod_{i=1}^{s} 
\prod_{\mathfrak{P}|P_i} 
{\mathfrak{P}_i}^S \prod_{Q \mid \mathfrak{p}_\infty} 
Q^{q-2} \cdot \text{con}_{\mathbb{F}_q(T)/\mathbb{F}_{q^d}K_{q,P^\alpha}}(\mathfrak{p}_\infty^{-2}),
\end{equation}
where $S=\alpha q^\alpha-(\alpha+1)q^{\alpha-1}$ \cite[prop. 12.7.2]{salvador2007topics}. 
Recall also the value of  the different divisor we have computed in proposition \ref{prop:47}
$\mathfrak{D} = \mathfrak{D}_{K_{q^d,P^\alpha}/\mathbb{F}_{q^d} K_{q,P^\alpha}} $ 
$$\mathfrak{D}_{K_{q^d,P^\alpha}/\mathbb{F}_{q^d} K_{q,P^\alpha}} = \left(\prod_{i=1}^s  \prod_{\mathfrak{P}|\wp_i} \mathfrak{P}\right)^{\frac{q^{d (\alpha-1)} \left(q^d-q^s\right)}{q^s-1}}  \left(\prod_{\mathfrak{B}|\mathfrak{p}_\infty} \mathfrak{B}\right)^{q\left(\frac{q^{d-1} - 1}{q-1}\right)}.$$
\begin{remark}
In the special case that $s=d$, ramification is tame in the extension $K_{q^d,P^\alpha}/\mathbb{F}_{q^d}K_{q,P^\alpha}$, so that $\mathcal{D}=0$ and the invariant elements thus satisfy $L_X(\Omega_X)^H=L_Y(\Omega_X)$.
\end{remark}

By the computation of the second author in \cite[p.46]{Ward}
A holomorphic differential on $X$ is given by a differential of the form 
\begin{equation} \label{hol13}
\omega= \prod_{i=1}^{{s}} \prod_{k=2}^\alpha \lambda_{i,k}^{\mu_{i,k}} \lambda_{i,1}^{-\mu_{i,1}} dT, 
\end{equation}
where $\lambda_{i,k}$ are  generators of the Carlitz torsion modules $C_{q^d}[P_i^k]$ of the factors of $M=\prod_{i=1}^r P_i^{n_i}$, and $\mu_{i,k}$ satisfy certain inequalities, see \cite[eq. (22)]{Ward}.
By the Chinese remainder theorem, the class of an element $D  \in \mathbb{F}_{q^d}[T]$ modulo $$P^\alpha = \prod_{i=1}^s P_i^\alpha$$ determined by the classes $D \mod P_i^\alpha$. Moreover, a class $D \mod P_i^\alpha$ can be expressed as a $P_i$-adic series
\[
D \mod P_i^\alpha = a_{i,0} +a_{i,1} P_i + a_{i,2} P_i^2 + \cdots a_{i,\alpha-1} P_i^{\alpha-1} \mod P_i^\alpha,
\]
where $a_{i,j} \in \mathbb{F}_{q^d}$ for all $1\leq i \leq s$ and $0 \leq j \leq \alpha-1$. The action of $D$ on $\lambda_{i,k}$ is determined by its $P_i$-adic decomposition.
We also have for each $k=1,\ldots,\alpha$ that  
\begin{align}
\left(\sum_{\ell=0}^{\alpha-1} a_{i,\ell} P_i^\ell\right)*_d \lambda_{i,k} &= 
\left(\sum_{\ell=0}^{\alpha-1} a_{i,\ell} P_i^\ell P_i^{\alpha-k} \right) *_d \lambda_{i,\alpha} \nonumber \\
&=   
\left(\sum_{\ell=0}^{\alpha-1} a_{i,\ell}  P_i^{\alpha- k + \ell} \right) *_d \lambda_{i,\alpha} \label{actionl} \\
&=   
\sum_{\ell=0}^{\alpha-1} a_{i,\ell}  \left(P_i^{\alpha- k + \ell} *_d \lambda_{i,\alpha}\right) \nonumber \\
&=  
\sum_{\ell=0}^{\alpha-1} a_{i,\ell} \lambda_{i,k-\ell},\nonumber \end{align}
where we define $\lambda_{i,m} := 0$ whenever $m \leq 0$. 

The action described in \eqref{actionl} gives rise to a representation 

\begin{eqnarray}\notag
\rho:(\mathbb{F}_{q^d}(T)/P^\alpha)^* & \longrightarrow & \mathrm{GL}(\alpha,\mathbb{F}_{q^d}) \\
\sum_{\ell=0}^{\alpha-1} a_\ell P^\ell 
& \mapsto & 
\begin{pmatrix}  \label{actionMatrixav}
a_0 & a_1 & \cdots & a_{\alpha-2} & a_{\alpha-1} \\
0   & a_0 & a_1  & \cdots   & a_{\alpha-2}  \\
\vdots & 0 & \ddots & \ddots &  \vdots\\
\vdots &   & \ddots & a_0 & a_1\\
0 & \cdots & \cdots & 0 & a_0
\end{pmatrix}.  
\end{eqnarray}
Upper triangular matrices of the form \eqref{actionMatrixav} are known as {\em Toeplitz} matrices. In this way, we obtain a representation of $\mathbb{F}_{q^d}^*$ inside the 
algebraic subgroup  of upper triangular matrices $T_\alpha(\mathbb{F}_{q^d})$ given by the ideal of relations 
\[
I:=\langle T_{i,i+\mu}-T_{j,j+\mu} \text{ for all } 1\leq i \leq \alpha 
\text{ and }  0\leq j \leq n-1\rangle,  
\]
i.e., the algebraic group given by the affine coordinate ring
\[
k[T_{i,j},\det(T_{i,j})^{-1}]/\langle T_{i,j}: i> j, I\rangle
\] and the corresponding finite group of Lie type defined by the fixed points of the $d$-th power of the Frobenius.  
\begin{definition} Let $\Delta T_\alpha$ denote the subgroup of $T_\alpha(\mathbb{F}_{q^d})$ consisting of matrices given as in \eqref{actionMatrixav}.
\end{definition}

\begin{remark}\label{select}
Let $M \in \mathbb{F}_q[T]$ be arbitrary and nonzero. An element $f$ is invariant under the action of $H_{q^d,M}$ if, and only if,
\begin{equation} \label{comp-cond}
\boxed{
\sigma_D \circ \sigma(x)=\sigma \circ\sigma_D(x), \text{ for all } D\in \mathbb{F}_{q^d}[T]/M.
}
\end{equation}
We now recall a classical result of descent theory. From a cohomological point of view, there is a natural action of $\mathrm{Gal}(\mathbb{F}_{q^d}/\mathbb{F}_q)$ on the $\mathbb{F}_{q^d}$-vector space $$V=H^0(K_{q^d,M},\Omega_{K_{q^d,M}}).$$
Let us consider a basis $\{\omega_1,\ldots,\omega_g\}$ of $V$, where $g$ denotes the genus of $K_{q^d,M}$. Define the map 
\begin{eqnarray*}
\mathrm{Gal}(\mathbb{F}_{q^d}/\mathbb{F}_q) &\rightarrow & \mathrm{GL}(V) \\
\sigma &\mapsto & \rho(\sigma): v \mapsto v^{\sigma}
\end{eqnarray*}
For each basis element $\omega_i$, $1\leq i \leq g$ we write
\[
\omega_i^\sigma= \sum_{\nu=1}^g \rho(\sigma)_{\nu,i} \omega_\nu.
\]
Then, since $(\omega_i^{\sigma_1})^{\sigma_2}=\omega^{\sigma_1\sigma_2}$, we have
\[
\omega_i^{\sigma_1 \sigma_2}=\sum_{\nu,\mu=1}^{g}
\rho(\sigma_1)^{\sigma_2}_{\nu,i} \rho(\sigma_2)_{\mu,\nu} \omega_\mu
\]
so that the function $\rho$ satisfies the cocycle condition $\rho(\sigma_1\sigma_2)=\rho(\sigma_1)^{\sigma_2} \rho(\sigma_2).$
The multidimensional Hilbert's 90 theorem asserts that  there is an element $\wp \in GL(V)$ such that $\rho(\sigma)=\wp^{-1}\wp^\sigma$. Moreover, the elements $\omega_i'=\omega_i \wp^{-1}$ are $\mathrm{Gal}(\mathbb{F}_{q^d}/\mathbb{F}_{q})$-invariant since 
\[
(\omega_i \wp^{-1})^\sigma=(\omega_i^\sigma)(\wp^{-1})^\sigma=
\omega_i \rho(\sigma)( \wp^{-1})^\sigma=\omega_i \wp^{-1}\wp^\sigma(\wp^{-1})^\sigma=\omega_i \wp^{-1}.
\]
From now on $\{\omega_i\}_{i=1,\ldots,g}$  denotes an $\mathrm{Gal}(\mathbb{F}_{q^d}/\mathbb{F}_{q})$-invariant basis. 
An arbitrary element $\omega \in V $ is written as
\[
\omega= \sum_{i=1}^g a_i \omega_i
\]
and the action of $\sigma_D$ with respect to this basis is given by a representation
\begin{eqnarray*}
G_{q^d,M}  & \rightarrow & GL(V) \\
 \sigma_D & \mapsto & A(\sigma_D)=:A_D
\end{eqnarray*} The condition of \eqref{comp-cond} is now expressed as the matrix condition 
\[
A_D 
\begin{pmatrix}
a_1^q \\ \vdots \\a_g^q
\end{pmatrix}
=A_D^q
\begin{pmatrix}
a_1^q \\ \vdots \\a_g^q
\end{pmatrix}
\Leftrightarrow
A_D^{q^{d-1}}
\begin{pmatrix}
a_1 \\ \vdots \\a_g
\end{pmatrix}
=A_D
\begin{pmatrix}
a_1 \\ \vdots \\a_g
\end{pmatrix}.
\]
Equivalently, the common eigenspace in $V$ of the eigenvalue 1 of all matrices of the form $A_D^{q^{d-1}-1}$ gives the space of $H_{q^d,M}$-invariant differentials. The space $L_Y(\Omega_Y(\mathcal{D}))$, see \eqref{invariants1} consists of meromorphic differentials with allowed poles at $\mathcal{D}$, and it is only necessary to select the holomorphic differentials among these. \end{remark}

We now return to the case $M = P^\alpha$ and examine invariant rings. The group $\Delta T_\alpha$ is clearly abelian. The action of \eqref{actionMatrixav} has an extension to the polynomial ring $\mathbb{F}_{q^d}[\lambda_{i,0},\ldots,\lambda_{i,\alpha-1}]$. Consider the vector space of polynomials of multidegree, which we denote by $\mathrm{mdeg}$, bounded by $(\mu_0,\ldots,\mu_{\alpha-1})$:
\[
W_{\bar{\mu}}:=\{ f \in \mathbb{F}_{q^d}[\lambda_0,\ldots,\lambda_{\alpha-1}]
: \mathrm{mdeg}(f) \leq (\mu_0,\ldots,\mu_{\alpha-1})
\} \qquad (\bar{\mu} = (\mu_0,\ldots,\mu_{\alpha-1})).
\]
The space $W_{\bar{\mu}}$ inherits a unique representation of the group $(\mathbb{F}_{q^d}(T)/P^\alpha)^*$. Therefore, we obtain 
\begin{equation}
\label{secondInvariant}
H^0(Y,\Omega_Y) \subset L_Y(\Omega_Y(\mathcal{D}))= W_{\bar{\mu}}^{H_{q^d,P^\alpha}}.
\end{equation}
We thus consider the polynomial ring $\mathbb{F}_{q^d}[\lambda_0,\ldots,\lambda_{\alpha-1}]$ equipped with the natural extension of the linear action of \eqref{actionl} which is represented by the matrix in \eqref{actionMatrixav}. The space of invariants $\mathbb{F}_{q^d}[\lambda_0,\ldots,\lambda_{\alpha-1}]^{H_{q^d,P^\alpha}}$
is a finitely generated ring. 

Consider the subgroup $H < \mathrm{GL}(\alpha,\mathbb{F}_{q^d})$ which consists of unitriangular elements of the form \eqref{actionMatrixav}, corresponding to the wild component of the cover $X \rightarrow Y$. Let us view the particular case of $\mathrm{GL}(3,\mathbb{F}_3)$, in order to simplify the presentation. In this case, the subgroup $H$ of Toeplitz matrices take the form 
\[ \sigma = \begin{pmatrix}
1 & a & b \\
0 & 1 & a \\
0 & 0 & 1
\end{pmatrix}, \qquad\qquad a,b \in \mathbb{F}_3.\] Viewing this matrix as acting on the ring $\mathbb{F}_3[x,y,z]$, we find $\sigma(x) = x$, and $$\sigma(y^3 - y x^2) = y^3 + a^3 x^2 - y x^2 - a x^3 = y^3 + a x^2 - y x^2 - a x^3 = y^3 - y x^2.$$ By modular representation theory \cite[Corollary 3.1.6]{CampWeh}, a homogeneous system of parameters $\{f,g,h\}$ for $\mathbb{F}_3[x,y,z]$ satisfies $$\mathbb{F}_3[f,g,h] = \mathbb{F}_3[x,y,z]^H$$ if, and only if, $$\deg(f) \deg(g) \deg(h) = |H|.$$ By definition, $|H| = 9$. Also by definition, we have $$\sigma(y) = y + ax,$$ so any invariant involving $y$ must have degree at least $q$. The same is true for $z$ as $$\sigma(z) = z + ay + bx.$$ As the invariant for $x$ is already of minimal degree (one), it follows that the remaining two invariants must be of degree $3 \mid 9$ if $\mathbb{F}_3[x,y,z]^H$ is polynomial. Let us then examine the action of $\sigma$ on all homogeneous terms of degree $3$ in the variables $x$, $y$, and $z$: 
\begin{center}{\tiny
 $$ \begin{array}{||c |c| c| c |c||} 
 \hline
 \text{Element} & \text{Constant} & x & x^2 & x^3 \\ 
 \hline\hline
 x^3 & 0 & 0 & 0& 0 \\ 
 \hline
 y^3 & 0 & 0 & 0 &  a \\
 \hline
 z^3 & a y^3 & 0 & 0 & b  \\
 \hline
 x^2 y & 0 & 0 & 0 & a \\
 \hline
 xy^2 & 0 & 0 & 2ay & a^2  \\ 
 \hline
  x^2 z & 0 & 0 & ay & b \\ 
 \hline
 x z^2 & 0 & a^2 y^2 + 2 a y z + z^2 & 2 a b y + 2 b z & b^2 \\
 \hline
 y^2 z & a y^3 + y^2 z  & 2 a^2 y^2 + b y^2 + 2 a y z  & a^3 y + 2 a b y + a^2 z & a^2 b  \\
 \hline
 y z^2 & a^2 y^3 + 2 a y^2 z + y z^2 & a^3 y^2 + 2 a b y^2 + 2 a^2 y z + 2 b y z + a z^2 & 2 a^2 b y + b^2 y + 2 a b z & a b^2  \\
 \hline
 xyz & 0 & a y^2 + y z & a^2 y + b y + a z & ab \\ 
 \hline
\end{array}$$} \emph{The table of remainder coefficients via the $H$-action.}
\end{center}
Note that we are only considering the remainder coefficients, as the unitriangular action always returns an expansion containing the original element (the left-hand column). Clearly $f=x$ is the first invariant. The invariant of minimal degree containing $y$ is equal to the aforementioned $y^3 - y x^2$. It remains then to find the third invariant of degree $3$, which must contain the variable $z$. Examination of the coefficients in $x^3$, we find that there are only two other terms with equal coefficients: $x^2 z$ and $z^3$. But if an invariant contains these two terms, then as $x^2 z$ has a coefficient of $ay$ on $x^2$, it follows that  it must contain another term with a coefficient equal to a multiply of $ay$ on $x^2$. The occurs in only one element: $x y^2$. But the coefficient of $x^3$ on $x y^2$ is equal to $a^2$, which matches no other terms of degree 3. Thus there is no invariant of degree 3 containing the element $z$. This argument may be easily generalised to our case of $\mathrm{GL}(\alpha,\mathbb{F}_{q^d})$. We thus obtain:

\begin{theorem} Let $H$ be the subgroup of upper unitriangular Toeplitz matrices in $\mathrm{GL}(\alpha,\mathbb{F}_{q^d})$. The ring of invariants $\mathbb{F}_{q^d}[\lambda_0,\ldots,\lambda_{\alpha-1}]^H$ is not polynomial. \end{theorem}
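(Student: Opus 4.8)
The plan is to bypass the term-by-term bookkeeping of the $\mathrm{GL}(3,\mathbb{F}_3)$ example and instead invoke the classical theorem of Serre: if the invariant ring $k[V]^H$ of a finite group $H \subset \mathrm{GL}(V)$ is a polynomial ring, then $H$ is generated by pseudoreflections, i.e.\ by elements $g$ with $\mathrm{rank}(g-\mathrm{Id})=1$. This implication is valid in every characteristic (it is the direction that survives the modular obstructions; see the discussion of reflection groups in \cite{CampWeh}). Thus it suffices to show that, for $\alpha\geq 3$, the group $H$ of upper unitriangular Toeplitz matrices is \emph{not} generated by its pseudoreflections. First I would record that $H$ is abelian of order $q^{d(\alpha-1)}$: writing $N$ for the nilpotent shift with $1$'s on the first superdiagonal, every element of $H$ has the form $\sigma=\mathrm{Id}+a_1N+a_2N^2+\cdots+a_{\alpha-1}N^{\alpha-1}$ with $a_i\in\mathbb{F}_{q^d}$, and any two such commute, being polynomials in $N$.

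Next I would pin down precisely which elements of $H$ are pseudoreflections. For $\sigma$ as above, $\sigma-\mathrm{Id}=p(N)$ with $p(t)=a_1t+\cdots+a_{\alpha-1}t^{\alpha-1}$. If $j$ is the least index with $a_j\neq 0$, then $p(N)=N^j u(N)$ where $u(N)=a_j\mathrm{Id}+a_{j+1}N+\cdots$ is invertible (it is $a_j$ times a unipotent matrix). Hence $\mathrm{rank}(\sigma-\mathrm{Id})=\mathrm{rank}(N^j)=\alpha-j$, so $\sigma$ is a pseudoreflection exactly when $\alpha-j=1$, that is, when $a_1=\cdots=a_{\alpha-2}=0$ and $a_{\alpha-1}\neq 0$. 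The pseudoreflections of $H$ are therefore precisely the transvections $\mathrm{Id}+cN^{\alpha-1}$ with $c\in\mathbb{F}_{q^d}^*$, which fix all variables except $\lambda_{\alpha-1}$; this is exactly the degeneracy the explicit table detects for $z=\lambda_2$ when $\alpha=3$.

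Finally, since $N^{2(\alpha-1)}=0$ for $\alpha\geq 2$, these transvections close up into the subgroup $R=\{\mathrm{Id}+cN^{\alpha-1}:c\in\mathbb{F}_{q^d}\}\cong(\mathbb{F}_{q^d},+)$ of order $q^d$. For $\alpha\geq 3$ we have $|R|=q^d<q^{d(\alpha-1)}=|H|$, so $R$ is a proper subgroup and $H$ is not generated by its pseudoreflections; by Serre's theorem $\mathbb{F}_{q^d}[\lambda_0,\ldots,\lambda_{\alpha-1}]^H$ cannot be polynomial. I expect the only point needing genuine care to be the invocation of Serre's theorem in the modular setting: one must use the direction ``polynomial $\Rightarrow$ reflection group,'' which holds in all characteristics, and not the Shephard--Todd--Chevalley converse, which can fail modularly; the rank computation itself is routine. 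I would also flag that the hypothesis $\alpha\geq 3$ is indispensable, since for $\alpha=2$ the entire group $H$ consists of transvections and one computes $\mathbb{F}_{q^d}[\lambda_0,\lambda_1]^H=\mathbb{F}_{q^d}[\lambda_0,\ \lambda_1^{q^d}-\lambda_0^{q^d-1}\lambda_1]$, which \emph{is} polynomial; thus the stated result genuinely concerns the range $\alpha\geq 3$.
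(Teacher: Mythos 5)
Your proof is correct, but it takes a genuinely different route from the paper's. The paper argues via the degree criterion for polynomial modular invariant rings (citing \cite{CampWeh}, Corollary 3.1.6): in the test case $\mathrm{GL}(3,\mathbb{F}_3)$ it observes that a homogeneous system of parameters generating the invariant ring would have to have degrees $1,3,3$, and then rules out, by an explicit table of the action on cubic monomials, any degree-$3$ invariant involving $z=\lambda_2$; the general case is dismissed as ``easily generalised.'' You replace this bookkeeping with Serre's theorem (polynomial invariants imply generation by pseudoreflections, valid in every characteristic --- and you correctly use only this direction, not the modular-fragile converse) plus the structural observation that $\sigma-\mathrm{Id}=N^{j}u(N)$ with $u(N)$ invertible forces $\mathrm{rank}(\sigma-\mathrm{Id})=\alpha-j$, so the pseudoreflections in $H$ are exactly the transvections $\mathrm{Id}+cN^{\alpha-1}$, which form a subgroup of order $q^d$, proper once $\alpha\geq 3$. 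What your approach buys is a uniform, fully rigorous proof for all $\alpha$ and $q^d$ --- precisely the generalisation the paper asserts but never carries out --- at the cost of invoking a deeper theorem, whereas the paper's computation is elementary and self-contained for the single case it treats. Your closing remark is moreover a genuine sharpening of the statement: the theorem as printed carries no hypothesis on $\alpha$, yet for $\alpha=2$ the group consists entirely of transvections and the invariant ring $\mathbb{F}_{q^d}[\lambda_0,\ \lambda_1^{q^d}-\lambda_0^{q^d-1}\lambda_1]$ \emph{is} polynomial (and for $\alpha=1$ the group is trivial), so the restriction $\alpha\geq 3$ that you flag should really be added to the theorem's hypotheses.
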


By Lemma \ref{Kummerfun}, we have $H_{q^d,P^\alpha}=(\sigma-1)G_{q^d,P^\alpha}$. When $\alpha = 3$, for example, an element of $H_{q^d,P^\alpha}$ takes the form (up to the tame part, i.e., a diagonal multiple)

\begin{align}\label{sigma-1}\notag
\begin{pmatrix}
1 & \sigma(a) & \sigma(b) \\
0 & 1 & \sigma(a) \\
0 & 0 & 1
\end{pmatrix}
\cdot
\begin{pmatrix}
1 & a & b \\
0 & 1 & a \\
0 & 0 & 1
\end{pmatrix}^{-1}&=\begin{pmatrix}
1 & a^q & b^q\\
0 & 1 & a^q \\
0 & 0 & 1
\end{pmatrix}
\cdot
\begin{pmatrix}
1 & a & b \\
0 & 1 & a \\
0 & 0 & 1
\end{pmatrix}^{-1}\\ & = 
\left(
\begin{array}{ccc}
 1 & a^q-a & a\big(a^q-a\big)+b^q-b \\
 0 & 1 & a^q-a \\
 0 & 0 & 1 \\
\end{array}
\right).
\end{align}
This may be easily generalised to arbitrary $\alpha$:

\begin{proposition} The upper diagonal entries of matrices for elements of $H_{q^d,P^\alpha}$ are $\mathbb{F}_{q^d}$-multiples of Frobenius differences $x^q - x$ $(x \in \mathbb{F}_{q^d})$. \end{proposition}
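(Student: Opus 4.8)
The plan is to exploit the explicit description $H_{q^d,P^\alpha}=(\sigma-1)G_{q^d,P^\alpha}$ from Lemma \ref{Kummerfun}, so that every element of $H_{q^d,P^\alpha}$ has the form $\sigma(D)D^{-1}$ for some $D\in G_{q^d,P^\alpha}$, and to read off its superdiagonal entries through the Toeplitz representation \eqref{actionMatrixav}. First I would reduce to the unitriangular (wild) case. Writing $D$ as the Toeplitz matrix with diagonal entry $a_0\in\mathbb{F}_{q^d}^*$ and superdiagonal data $a_1,\ldots,a_{\alpha-1}$, one factors $D=a_0U$ with $U$ unitriangular Toeplitz having superdiagonal entries $u_j=a_j/a_0$. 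Since $\sigma$ acts by $x\mapsto x^q$ on coefficients, $\sigma(D)D^{-1}=a_0^{q-1}\,\sigma(U)U^{-1}$, and because $a_0^{q-1}\in\mathbb{F}_{q^d}$ this scalar is harmless: it suffices to prove the claim for the strictly upper part of $\sigma(U)U^{-1}$ and then multiply through by $a_0^{q-1}$. This is precisely the ``diagonal multiple'' caveat preceding \eqref{sigma-1}.

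Next I would reformulate the Toeplitz matrices as the principal units of the truncated polynomial ring $\mathbb{F}_{q^d}[t]/(t^\alpha)$, identifying the nilpotent shift matrix $N$ (with $N^\alpha=0$) with $t$; under this identification $U$ becomes $f=1+\sum_{j=1}^{\alpha-1}u_jt^j$, its Frobenius image is $f^\sigma=1+\sum_j u_j^q t^j$, and the superdiagonal entries of $\sigma(U)U^{-1}$ are exactly the coefficients $g_k$ of $g:=f^\sigma f^{-1}=1+\sum_{k\geq 1}g_k t^k$. Comparing coefficients of $t^k$ in the identity $gf=f^\sigma$ (using $u_0=g_0=1$) yields the recursion
\[
g_k=(u_k^q-u_k)-\sum_{j=1}^{k-1}g_j\,u_{k-j},\qquad g_0=1.
\]
The base case is $g_1=u_1^q-u_1$, a genuine Frobenius difference, matching the $(1,2)$ entry of \eqref{sigma-1}; the commutativity of $\Delta T_\alpha$ guarantees that the order of the factors $\sigma(U)$ and $U^{-1}$ is immaterial.

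Finally I would argue by induction on $k$ that each $g_k$ lies in the $\mathbb{F}_{q^d}$-span of the Frobenius differences $u_1^q-u_1,\ldots,u_k^q-u_k$: assuming this for all $j<k$, the recursion expresses $g_k$ as the Frobenius difference $u_k^q-u_k$ minus an $\mathbb{F}_{q^d}$-linear combination (with coefficients $u_{k-j}\in\mathbb{F}_{q^d}$) of the $g_j$ already treated, hence again such a combination. Reinstating the factor $a_0^{q-1}$ from the first step then gives the statement for arbitrary $D$. The hard part will not be the computation but pinning down what ``$\mathbb{F}_{q^d}$-multiple of a Frobenius difference'' is meant to assert: the content is not merely that the entries lie in $\mathbb{F}_{q^d}$ (which is vacuous) but that they are assembled solely from the differences $u_j^q-u_j$, so that they vanish identically when every $u_j\in\mathbb{F}_q$ --- equivalently, when $D$ is already $\sigma$-invariant and thus lies in the kernel $G_{q,P^\alpha}$ of $\sigma-1$. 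Making this vanishing transparent through the recursion is the crux, and the remainder is a routine induction.
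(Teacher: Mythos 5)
Your proposal is correct, and it goes further than the paper does: what the paper offers as proof of this proposition is exactly the $3\times 3$ computation \eqref{sigma-1} (the case $\alpha=3$) followed by the assertion that it "may be easily generalised to arbitrary $\alpha$", with no general argument supplied. You start from the same two ingredients --- $H_{q^d,P^\alpha}=(\sigma-1)G_{q^d,P^\alpha}$ from Lemma \ref{Kummerfun} and the Toeplitz representation \eqref{actionMatrixav} --- but you actually carry out the generalisation: the factorisation $D=a_0U$ with $\sigma(D)D^{-1}=a_0^{q-1}\sigma(U)U^{-1}$ disposes of the tame scalar exactly as the paper's parenthetical "up to the tame part" intends; identifying unitriangular Toeplitz matrices with principal units $f=1+\sum_j u_jt^j$ of $\mathbb{F}_{q^d}[t]/(t^\alpha)$ turns the matrix product into a coefficient comparison in $gf=f^\sigma$, namely
\[
g_k=(u_k^q-u_k)-\sum_{j=1}^{k-1}g_j\,u_{k-j},
\]
after which the induction on $k$ is immediate. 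This recursion is precisely the content hidden behind the paper's "easily generalised", so your route is the same in spirit but constitutes a complete proof for all $\alpha$; it also pins down the correct reading of the statement --- not that each entry lies in $\mathbb{F}_{q^d}$ (vacuous), but that each entry lies in the $\mathbb{F}_{q^d}$-span of $u_1^q-u_1,\ldots,u_{\alpha-1}^q-u_{\alpha-1}$ and hence vanishes when all $u_j\in\mathbb{F}_q$, consistent with $\sigma$-invariant classes lying in the kernel of $\sigma-1$. Two minor remarks: your $g_2=(b^q-b)-a(a^q-a)$ is what the multiplication actually yields, so the $(1,3)$ entry $a(a^q-a)+b^q-b$ displayed in \eqref{sigma-1} carries a sign slip (harmless, as both lie in the span of the differences); and, like the paper, you treat $\sigma$ as acting entrywise on a single Toeplitz block, which is literally correct when the corresponding linear factor of $P$ is $\sigma$-stable (the case $s=1$), whereas for $s>1$ the Frobenius also permutes the $s$ blocks --- an imprecision your proof inherits from the paper's own framing rather than introduces.
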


An explicit description of the holomorphic differentials of $K_{q,P^\alpha}$ requires computation of classical binomial coefficients occurring in the $H_{q^d,P^\alpha}$-action on the canonical basis consisting of elements of the form \eqref{hol13}. Although it would be desirable, no basis of cyclotomic holomorphic differentials is currently known which allows a description of this action in terms of the function field binomial coefficients $\bfrac{M}{i}_{q^d}$ appearing in the expansion of the additive Carlitz action \eqref{Carlitzaction}
 $$C_{q^d}(M)(u) = \sum a_i \tau_{q^d}^i (u) = \sum_{i=0}^{\deg(M)} \bfrac{M}{i}_{q^d} u^{q^{di}}.$$ This fact is owed to linear dependence between terms: The Carlitz $M$-torsion module $C_{q^d}[M]$ is too small as an $\mathbb{F}_{q^d}$-vector space of dimension equal $\deg(M)$, far from sufficient to match genus growth \cite[Proposition 12.7.1]{salvador2007topics}. We leave this as an open question for a future work.

\end{document}